\documentclass[twoside,11pt]{amsart}
\usepackage[all]{xy}
\CompileMatrices
\usepackage{amsfonts}
\usepackage{amssymb}
\usepackage{amsthm}
\usepackage{amsmath}
\usepackage{ifthen}
\usepackage{enumitem}
\usepackage[usenames]{color}
\usepackage{comment}

\vfuzz2pt 
\hfuzz2pt 
\setlength{\textwidth}{14,5cm} \setlength{\textheight}{21,5cm}
\setlength{\marginparsep}{0.25cm} \setlength{\parindent}{0cm}
\setlength{\parskip}{1ex} \setlength{\oddsidemargin}{2.5mm}
\setlength{\evensidemargin}{\oddsidemargin}
\addtolength{\evensidemargin}{1cm} \setcounter{tocdepth}{3}


 \newtheorem{thm}{Theorem}[section]
 \newtheorem{prop}[thm]{Proposition}
 
 \newtheorem{lem}[thm]{Lemma}

\theoremstyle{definition}
\newtheorem{defn}[thm]{Definition}

\theoremstyle{remark}
\newtheorem{rem}[thm]{Remark}

\newcommand{\Z}{\mathbb{Z}}
\newcommand{\Q}{\mathbb{Q}}
\newcommand{\R}{\mathbb{R}}
\newcommand{\C}{\mathbb{C}}

\newcommand{\GL}{\mathrm{GL}}

\newcommand{\diag}{\mathrm{diag}}



  1

\newcommand{\transpose}[1]{\text{$^t\!#1$}}
\newcommand{\G}{\ifmmode {\mathcal{G}}\else${\mathcal{G}}$\ \fi}

\pagestyle{myheadings}

\def\sectionnam{\@empty}
\def\subsectionnam{\@empty}



\begin{document}

\title[Algebraicity of $L$-values attached to Quaternionic modular forms.] 
{Algebraicity of $L$-values attached to Quaternionic modular forms.}

\author{Thanasis Bouganis and Yubo Jin}
\address{Department of Mathematical Sciences\\ Durham University\\
South. Rd.\\ Durham, DH1 3LE, U.K..}
\email{athanasios.bouganis@durham.ac.uk, yubo.jin@durham.ac.uk}

\maketitle 
\begin{abstract}
In this paper we prove the algebraicity of some $L$-values attached to quaternionic modular forms. We follow the rather well established path of the doubling method. Our main contribution is that we include the case where the corresponding symmetric space is of non-tube type. We make various aspects very explicit such as, the doubling embedding, coset decomposition, and the definition of algebraicity of modular forms via CM points. 
\end{abstract}

\section{Introduction}

Special values of $L$-functions attached to automorphic forms have a long history in modern number theory. Their importance is difficult to overestimate and for this reason they have been the 
subject of intense study in recent decades. There is no doubt that it is  important to study $L$-values of automorphic forms whose underlying symmetric space does not have hermitian structure (for example automorphic forms for $\mathrm{GL}_n$), however in this paper we will be dealing with a kind of automorphic form where the corresponding symmetric space has a hermitian structure. To go a bit further we now introduce some notation.

 Let $\mathbb{D}$ be a division algebra over $\Q$ and $V$ a free left $\mathbb{D}$-module of finite rank. Denote $\mathrm{End}(V,\mathbb{D})$ be the ring of $\mathbb{D}$-linear endomorphism of $V$ and $\mathrm{GL}(V,\mathbb{D})=\mathrm{End}(V,\mathbb{D})^{\times}$. For a nondegenerate hermitian (or skew-hermitian) form $\langle \,,\rangle:V\times V\to D$, we define a generalized unitary group
\[
G:=G_n:=\{g\in\mathrm{GL}(V,\mathbb{D}):\langle gx,gy\rangle=\langle x,y\rangle\}.
\] 
One can define automorphic forms associated to such group as in \cite{Borel}. These can be seen as functions on a symmetric space $G(\R)/K$, where $K$ is a maximal compact subgroup of $G(\mathbb{R})$. In addition when the associated symmetric space can be given a hermitian structure, one can define holomorphic automorphic forms which is what we refer as modular forms in this paper. The symmetric spaces $G(\R)/K$ have been classified \cite[Chapter X]{Helgason} (see also \cite{LKW}). In particular there are four infinite families of irreducible hermitian symmetric spaces of non compact type\\

(A) $\{z\in\C_m^n:zz^{\ast}<1_n\}$,\\
(B) $\{z\in\C^n:z^{\ast}z<1+|\transpose{z}z/2|^2<2\}$,\\
(C) $\{z\in\C_n^n:\transpose{z}=z,z^{\ast}z<1_n\}$,\\
(D) $\{z\in\C_n^n:\transpose{z}=-z,z^{\ast}z<1_n\}$.\\

The spaces above are the so-called bounded realisations of the symmetric spaces, and one can with the use of the Cayley transform show that are biholomorphic to unbounded domains. 
For example, when $\mathbb{D}=\Q$ and $\langle\, ,\,\rangle$ is skew-hermitian, then $G$ is the symplectic group and we have the notion of Siegel modular forms defined over symmetric spaces of type C.  The unbounded realisation is the classical Siegel upper space. When $\mathbb{D}$ is an imaginary quadratic field, $G$ is the unitary group and we have the notion of Hermitian modular forms defined over symmetric spaces of type A.  For these two types of domains (and their groups) there has been an intensive study on the algebraic properties of their attached special $L$-values. We will not cite here the vast literature that has grown in the past few decades or so, we will only mention here the book \cite{Sh00}, the more recent article \cite{P} and the references there in for a more complete account of the Siegel case, and the work of Harris \cite{Harris94}  in the Hermitian modular forms case.

The focus of this paper is on the domains of type D above.  This domain arises when we select $\mathbb{D}$ to be a definite quaternion algebra  and the form $\langle\, ,\,\rangle$ skew hermitian (see next section for details). There are already some works for these modular forms, for example \cite{B,K,U,Y}, but it is fair to say that these modular forms are not as intensively studied as the Siegel or Hermitian ones. Even more importantly most, if not all, of the works are restricted to the case when the dimension of $V$ is even. The importance of this restriction is related to the unbounded realisation of the corresponding symmetric domain. In particular when $n$ is even, the unbounded domain is biholomorphic to a tube domain, or what is usually called a domain of Siegel Type I. When $n$ is odd the domain is not any more of tube type (a similar aspect is seen also for Hermitian modular forms in the non-split case $U(n,m)$ with $n \neq m$). The significance of this distinction will become clear later in this paper, since the non-tube case is considerably more technical. For example, as we will see, the notion of an algebraic modular form cannot be the usual one (algebraic Fourier coefficients) or the doubling embedding which is needed in the doubling method is considerably more complicated to write explicitly in the unbounded realisation. 

As we have indicated we will be studying the special values of L-functions by using the doubling method of Garrett, Shimura, Piatetski-Shapiro and Rallis. Without going here into details but referring later to the paper, the key idea is to obtain an integral representation relating the L-function to the pull-back of a Siegel-type Eisenstein series. Then the analytic and algebraic properties of the L-function can be studied from those of the Eisenstein series. The latter is well-understood thanks to the rather explicitly known Fourier expansion.

Our starting point is a cuspidal Hecke eigenform $\mathbf{f} \in\mathcal{S}_k(K_1(\mathfrak{n}))$. We then consider two copies of our group $G_n$ with an embedding $G_{n}\times G_{n}\to G_N$ with $N=2n$ and hence $G_N$ splits (see section 4 for notation). For $P_N$ the Siegel parabolic subgroup of $G_N$, we describe in Proposition \ref{Coset Decomposition} the double coset $P_N\backslash G_N/G_{n}\times G_{n}$. Then a Siegel-type Eisenstein series over $G_N$ can be decomposed into several orbits and except for one `main orbit' all orbits vanish when considering an inner product (in one variable) with the cusp form $\mathbf{f}$. This allows us to prove the following formula, see section 4 and in particular Theorem \ref{Integral Representation} for details and further notation,
\[
\int_{G_{n}(\Q)\backslash G_{n}(\mathbb{A})/K_1(\mathfrak{n})K_{\infty}}\mathbf{E}(g\times h,s)\mathbf{f}(h)\mathbf{d}h=c_k(s)D(s,\mathbf{f},\chi)\mathbf{f}(g),
\]
where here $\chi$ is a Dirichlet character, $c_k(s)$ is an explicit function on $s$, $D(s,\mathbf{f},\chi)$ is a Dirichlet series which is related (see Equation (\ref{Relation of Dirichlet series})) to the twisted standard $L$-function $L(s, \mathbf{f},\chi)$. \newline 

In section 5, we review the definition of algebraic modular forms and differential operators. It is well known how to define algebraic modular forms on hermitian symmetric space. There are several different definitions and we will mainly follow the one via CM point as in \cite{Sh00}. Using the Maass-Shimura differential operators we discuss the notion of a nearly holomorphic modular form in our setting. These differential operators for all four types of symmetric spaces mentioned above have been studied in \cite{Sh84,Sh94}. We will summarise the result there and apply it to the Siegel-type Eisenstein series mentioned above. Based on this and thanks to the well-understood Fourier expansion of Siegel-type Eisenstein series, we will prove our main algebraic result for L-functions by the same method as in \cite{Sh00}. Our main result is Theorem \ref{Main Theorem} which gives,

\begin{thm}
Let $\mathbf{f}\in\mathcal{S}_k(K_1(\mathfrak{n}),\overline{\Q})$ be an algebraic cuspidal Hecke eigenform, and let $\chi$ be a Dirichlet character whose conductor divides the ideal $\mathfrak{n}$. Assume $k>2n-1$ and let $\mu\in\Z$ such that $2n-1< \mu\leq k$. Then 
\[
\frac{L(\mu,\mathbf{f},\chi)}{\pi^{n(k+\mu)-\frac{3}{2}n(n-1)}\langle\mathbf{f,f}\rangle}\in\overline{\Q}.
\]
\end{thm}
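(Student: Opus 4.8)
The plan is to follow the classical Shimura-style argument via the doubling method, using the integral representation of Theorem~\ref{Integral Representation} as the bridge between the analytic object $L(\mu,\mathbf{f},\chi)$ and the algebraically well-understood Siegel-type Eisenstein series $\mathbf{E}(g\times h,s)$. The first step is to specialise the complex parameter $s$ in that integral representation to the point $s_0$ corresponding to the integer $\mu$ in the range $2n-1<\mu\le k$; one must check that $s_0$ lies in the range of absolute convergence (or is reached by analytic continuation) and that the explicit factor $c_k(s_0)$ is nonzero and has the expected transcendental part, a power of $\pi$ together with factors from $\Gamma$-functions and elementary constants. Combining this with Equation~(\ref{Relation of Dirichlet series}) relating $D(s,\mathbf{f},\chi)$ to $L(s,\mathbf{f},\chi)$ then rewrites $L(\mu,\mathbf{f},\chi)$ as (an explicit constant times) the Petersson-type pairing $\langle \mathbf{E}(\,\cdot\,\times h,s_0),\mathbf{f}(h)\rangle$ divided by $\mathbf{f}(g)$, hence essentially as $\langle \mathbf{E}_{s_0},\mathbf{f}\rangle/\langle \mathbf{f},\mathbf{f}\rangle$ after a second pairing with $\mathbf{f}$.

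The second step is to control the arithmetic nature of the Eisenstein series at the relevant point. Since $\mu$ need not be in the holomorphic range, the value $\mathbf{E}(\,\cdot\,,s_0)$ is in general only a \emph{nearly holomorphic} modular form, obtained by applying the Maass--Shimura differential operators discussed in section~5 to a holomorphic Eisenstein series of lower weight; here the distinction between tube and non-tube type enters, since in the non-tube case the differential operators and the notion of algebraicity are the more delicate ones recorded from \cite{Sh84,Sh94}. Using the explicitly known Fourier expansion of the Siegel-type Eisenstein series one shows that, after dividing by the appropriate power of $\pi$ dictated by the weight shift and the number of variables, the resulting nearly holomorphic form has algebraic Fourier coefficients (or, in the non-tube setting, is algebraic in the CM-point sense of \cite{Sh00} adapted in section~5). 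This is the step where the precise exponent $n(k+\mu)-\tfrac{3}{2}n(n-1)$ is pinned down: the $n(k+\mu)$ part comes from the archimedean zeta-integral/normalisation and the pairing, while the $-\tfrac{3}{2}n(n-1)$ correction is the contribution of the non-tube geometry, reflecting the dimension of the symmetric space of type D and the discrepancy between the naive and the correct normalisation of CM periods.

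The third step is to invoke the algebraicity criterion for Petersson inner products against a nearly holomorphic form: if $\mathbf{g}$ is a nearly holomorphic modular form that is arithmetic over $\overline{\Q}$ and $\mathbf{f}$ is an algebraic cuspidal Hecke eigenform, then $\langle \mathbf{g},\mathbf{f}\rangle/\langle\mathbf{f},\mathbf{f}\rangle\in\overline{\Q}$ (this is the standard consequence of the fact that the holomorphic projection, or the relevant Hecke-equivariant projection onto the $\mathbf{f}$-isotypic component, is defined over $\overline{\Q}$, exactly as in \cite{Sh00}). Applying this with $\mathbf{g}$ the suitably $\pi$-normalised Eisenstein series from step two, and then tracking all the remaining explicit constants (the Gauss sums attached to $\chi$, the factors in $c_k(s_0)$, and the CM-period normalisation) to verify that their transcendental parts combine into exactly $\pi^{n(k+\mu)-\frac{3}{2}n(n-1)}$, yields the claimed membership
\[
\frac{L(\mu,\mathbf{f},\chi)}{\pi^{n(k+\mu)-\frac{3}{2}n(n-1)}\langle\mathbf{f},\mathbf{f}\rangle}\in\overline{\Q}.
\]

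I expect the main obstacle to be step two in the non-tube case: making the Maass--Shimura operators and the resulting nearly holomorphic structure explicit enough on the type-D domain to read off both the algebraicity of the normalised Fourier coefficients and the exact power of $\pi$, since the non-tube realisation forces one to abandon the naive "algebraic Fourier coefficients" definition and argue instead through CM points, and the bookkeeping of archimedean factors (which produces the somewhat unusual exponent $-\tfrac{3}{2}n(n-1)$) is where errors most easily creep in. A secondary technical point is ensuring that the "main orbit" argument behind Theorem~\ref{Integral Representation} genuinely isolates the standard $L$-function with the correct Euler factors at the bad primes dividing $\mathfrak{n}$, so that the identity one feeds into the algebraicity machinery is the $L$-value as stated and not some imprimitive variant.
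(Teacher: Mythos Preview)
Your outline is essentially the paper's approach: integral representation (Theorem~\ref{Integral Representation}), near-holomorphy and $\overline{\Q}$-rationality of the normalised Eisenstein series via Maass--Shimura operators (Proposition~\ref{Nearly holomorphic Eisenstein series}), and then an inner-product argument. The one point where you diverge from the paper is your Step~3: you invoke $\langle\mathbf{g},\mathbf{f}\rangle/\langle\mathbf{f},\mathbf{f}\rangle\in\overline{\Q}$ as a pre-existing fact ``exactly as in \cite{Sh00}'', but in this quaternionic setting no such statement is available in advance, and the paper (like Shimura) does \emph{not} take it as input. Instead it proceeds as follows: first replace $\mathfrak{E}(g\times h,\mu)$ by a function $\overline{\mathbf{T}(g,h)}$ holomorphic in $h$ via holomorphic projection (this requires showing the relevant $\mathfrak{U}$-module is unitarisable, which uses $k>2n-1$); then decompose $\pi^{-\beta}\mathbf{T}(g,h)=\sum_j\overline{\mathbf{g}_j(g)}\mathbf{h}_j(h)$ with $\mathbf{g}_j,\mathbf{h}_j$ algebraic (Lemma~\ref{decomposition}); then \emph{evaluate at CM points} $w=g_\infty z_0$ to obtain, for each such $w$, an identity $\dfrac{c_k(\mu)L(\mu,\mathbf{f},\chi)}{\pi^\beta}\dfrac{\mathbf{f}(g)}{\mathfrak{P}_k(w)}=\langle\mathbf{h}_w,\mathbf{f}\rangle$ with $\mathbf{h}_w\in\mathcal{V}(\overline{\Q})$; and finally argue that the $\mathbf{h}_w$ span $\mathcal{V}(\overline{\Q})$ (using nonvanishing of $L(\mu,\mathbf{f},\chi)$ in the range of absolute convergence), which \emph{simultaneously} yields both $\langle\mathbf{h},\mathbf{h}'\rangle/\langle\mathbf{f},\mathbf{f}\rangle\in\overline{\Q}$ for all $\mathbf{h},\mathbf{h}'\in\mathcal{V}(\overline{\Q})$ and the desired $L$-value algebraicity. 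In other words, the inner-product rationality you wish to quote is a \emph{consequence} of the argument, not an ingredient; if you treat it as given you are implicitly assuming the theorem. Your accounting of the $\pi$-power is correct: $\beta=n(k+\mu)-n(n-1)$ from the Eisenstein side, and $c_k(\mu)\in\overline{\Q}^\times\cdot\pi^{n(n-1)/2}$ from Lemma~\ref{Reproducing Kernel}, combine to give the exponent $n(k+\mu)-\tfrac{3}{2}n(n-1)$.
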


\begin{rem} \label{Remark on the conductor}
We note here that the condition on the conductor of the Dirichlet character is not restrictive. Indeed, for $\mathbf{f}\in\mathcal{S}_k(K_1(\mathfrak{n}),\overline{\Q})$ and $\chi$ of conductor $\mathfrak{m}$ we can select $\mathfrak{n}'=\mathfrak{nm}$ instead of $\mathfrak{n}$ since $\mathcal{S}_k(K_1(\mathfrak{n}), \overline{\mathbb{Q}}) \subset \mathcal{S}_k(K_1(\mathfrak{n}'), \overline{\mathbb{Q}})$.
\end{rem}

Most of our arguments to prove the above are straightforward generalisation of \cite{Sh00} from the unitary and symplectic setting to our setting. Our main contribution is making some of the not always obvious generalisations as explicit as possible, such as the diagonal embedding, especially in the non-tube case (see section 2.3), the coset decomposition $P_N\backslash G_N/G_{n}\times G_{n}$ (see section 4)  and the definition of an algebraic modular form via values at CM points (see section 5).
Finally we should add that our computations are mainly using the adelic language (in comparison to the more classical in \cite{Sh00}), which is also inspired by \cite{P}.

\section{Groups and Symmetric Spaces}

In this and the next section, we introduce the notion of a quaternionic modular form and discuss some main properties. Such modular forms have been already studied (see for example \cite{B} and \cite{K}) but we extend the discussion to include also the case of non-split groups. For most of our notation here we follow the one introduced in the books \cite{Sh97} and \cite{Sh00}, where the case of Siegel and Hermitian modular forms is considered.

\subsection{Quaternionic Unitary Groups}

We start by fixing some notation. For more details on quaternion algebras the reader is refered to \cite{JV}. In this work a quaternion algebra will mean a central simple algebra of dimension four over $\Q$. After selecting a basis, we can write it in the form
\[
\mathbb{B}=\Q\oplus\Q\zeta\oplus\Q\xi\oplus\Q\zeta\xi,
\]
where
\[
\zeta^2=\alpha,\xi^2=\beta,\zeta\xi=-\xi\zeta.
\]
with $\alpha,\beta$ nonzero squarefree integers. We assume in this paper that $\mathbb{B}$ is definite, i.e. $\alpha,\beta<0$. The main involution of $\mathbb{B}$ is given by
\[
\overline{\cdot}:\mathbb{B}\to\mathbb{B}:a+b\zeta+c\xi+d\zeta\xi\mapsto\overline{a+b\zeta+c\xi+d\zeta\xi}=a-b\zeta-c\xi-d\zeta\xi.
\]
We warn the reader that we may, by abusing the notation, denote $\overline{\cdot}$ various involution of algebras (for example complex conjugation on quadratic imaginary fields), but it will be always clear from the context what is meant. The trace and norm are defined by $\mathrm{tr}(x)=x+\overline{x},N(x)=x\overline{x}$ for $x\in\mathbb{B}$. As usual we write $M_n(\mathbb{B})$ for the set of $n\times n$ matrices with entries in $\mathbb{B}$. We also use the notation $\mathbb{B}_n^m$ for the set of $m\times n$ matrices with entries in $\mathbb{B}$. For $X\in M_n(\mathbb{B})$, we write $X^{\ast}=\transpose{\overline{X}},\hat{X}=(X^{\ast})^{-1}$ for the conjugate transpose and its inverse (if makes sense). 

Identify $\zeta,\xi$ with $\sqrt{\alpha},\sqrt{\beta}\in\overline{\Q}$ and let $\mathbb{K}=\Q(\xi)$. We define the embedding
\[
\mathfrak{i}:\mathbb{B}\to M_2(\mathbb{K}),a+b\zeta+c\xi+d\zeta\xi\mapsto\left[\begin{array}{cc}
a+c\xi & \alpha(b-d\xi)\\
b+d\xi & a-c\xi
\end{array}\right].
\]
One easily checks that for $x\in\mathbb{B}$
\[
\mathfrak{i}(x)^{\ast}=I^{-1}\mathfrak{i}(x^{\ast})I,\,\,\,\,I:=\left[\begin{array}{cc}
-\alpha & 0\\
0 & 1
\end{array}\right],
\]
\[
\transpose{\mathfrak{i}(x)}=J^{-1}\mathfrak{i}(x^{\ast})J,\,\,\,\,\,J:=\left[\begin{array}{cc}
0 & -1\\
1 & 0
\end{array}\right],
\]
and $\mathfrak{i}$ induces an isomorphism
\[
\mathfrak{i}:\mathbb{B}\stackrel{\sim}\longrightarrow\{x\in M_2(\mathbb{K}):\overline{x}IJ=IJx\}.
\]
We extend this map to an embedding $\mathfrak{i}:M_n(\mathbb{B})\to M_{2n}(\mathbb{K})$ by sending $x=(x_{ij})$ to $(\mathfrak{i}(x_{ij}))$. Denote $I_n'=\mathrm{diag}[I,...,I],J_n'=\mathrm{diag}[J,...,J]$ with $n$ copies. Then for $x\in M_n(\mathbb{B})$
\[
\mathfrak{i}(x)^{\ast}=I_n^{\prime-1}\mathfrak{i}(x^{\ast})I_n',\,\,\,\,\,\transpose{\mathfrak{i}(x)}=J_n^{\prime-1}\mathfrak{i}(x^{\ast})J_n',
\]
and $\mathfrak{i}$ induces an isomorphism
\[
\mathfrak{i}:M_n(\mathbb{B})\stackrel{\sim}\longrightarrow\{x\in M_{2n}(\mathbb{K}):\overline{x}I_n'J_n'=I_n'J_n'x\}.
\] 

For a matrix with entries in quaternion algebra, the determinant $\det$ and trace $\mathrm{tr}$ will mean the reduced norm and reduced trace. That is taking the determinant and trace for its image under $\mathfrak{i}$. It is well known that the definition of reduced norm and trace is indeed independent of the choice of such embedding and the field $\mathbb{K}$. Denote
\[
\mathrm{GL}_n(\mathbb{B})=\{g\in M_n(\mathbb{B}):\det(g)\neq 0\},\mathrm{SL}_n(\mathbb{B})=\{g\in M_n(\mathbb{B}):\det(g)=1\}.
\]

Let $\mathbb{A}$ be the adele ring of $\Q$. By a place $v$, we mean either a finite place corresponding to a prime or the archimedean place $\infty$. The set of finite places is denoted as $\mathbf{h}$. We write $\mathbb{A}=\mathbb{A}_{\mathbf{h}}\R$ with finite adeles $\mathbb{A}_{\mathbf{h}}$ and $x=x_{\mathbf{h}}x_{\infty}$ with $x\in\mathbb{A},x_{\mathbf{h}}\in\mathbb{A}_{\mathbf{h}},x_{\infty}\in\R$. Fix embeddings $\overline{\Q}\to\overline{\Q}_v$ and set $\mathbb{B}_v=\mathbb{B}\otimes_{\Q}\Q_v$. The previous definition of trace, norm, determinant naturally extends locally or adelically. Fix a maximal order $\mathcal{O}$ of $\mathbb{B}$ and set $\mathcal{O}_v=\mathcal{O}\otimes_{\Z}\Z_v$. For a place $v$ we say $v$ splits if $\mathbb{B}_v\cong M_2(\Q_v)$. If this is the case we fix an isomorphism $\mathfrak{i}_v:\mathbb{B}_v\stackrel{\sim}\longrightarrow M_2(\Q_v)$ and assume $\mathfrak{i}_v(\mathcal{O}_v)=M_2(\Z_v)$ for finite place. $\mathbb{B}$ is called indefinite if $B_v$ is split for $v=\infty$ and definite otherwise. In particular, $\mathbb{B}$ is definite if $\beta<0$ and indefinite otherwise. That is, for the infinite place, by our assumption, $\mathbb{B}_{\infty}$ is the Hamilton quaternion
\[
\mathbb{H}=\R\oplus\R\mathbf{i}\oplus\R\mathbf{j}\oplus\R\mathbf{ij},\,\,\,\,\,\mathbf{i}^2=\mathbf{j}^2=-1,\mathbf{ij}=-\mathbf{ji},
\]
and the map $\mathfrak{i}$ above induces an isomorphism
\[
\mathfrak{i}:M_n(\mathbb{H})\stackrel{\sim}\longrightarrow \{x\in M_{2n}(\C):\overline{x}J_n'=J_n'x\}.
\]

In this paper, we consider following algebraic groups
\[
G=G_n(\Q)=\{g\in\mathrm{SL}_n(\mathbb{B}):g^{\ast}\phi g=\phi\},\phi=\left[\begin{array}{ccc}
0 & 0 & -1_m\\
0 & \zeta\cdot1_r & 0\\
1_m & 0 & 0
\end{array}\right].
\]
Here $n=2m+r$ and we assume $m>1$. Such a group is usually called a quaternionic unitary group. For a split finite place $v$, we make the technical assumption that $\zeta \in\Q_v^{\times 2}$ for all split places. Under this assumption, for every finite spilt place $v$, we have
\[
G(\Q_v)\cong\left\{g\in\mathrm{SL}_{2n}(\Q_v):\transpose{g}\left[\begin{array}{cc}
0 & 1_n\\
1_n &0
\end{array}\right]g=\left[\begin{array}{cc}
0 & 1_n\\
1_n &0
\end{array}\right]\right\}=:\mathrm{SO}(n,n)(\Q_v).
\]
We will discuss the local archimedean group $G(\R)$ and symmetric space in next subsection.

We fix an integral two-sided ideal $\mathfrak{n}=(\mathcal{N})$ of $\mathcal{O}$ generated by $\mathcal{N}=\prod_vp_v^{n_v}\in\Z$. We define an open compact subgroup $K_1(\mathfrak{n})\subset G(\mathbb{A}_{\mathbf{h}})$ by $K_1(\mathfrak{n})=\prod_vK_v$ where
\[
K_v=\left\{\gamma=\left[\begin{array}{ccc}
a & b & c\\
g & e & f\\
h & l & d
\end{array}\right]\in G(\mathcal{O}_v):\gamma\equiv\left[\begin{array}{ccc}
1_m & \ast & \ast\\
0 & 1_r & \ast\\
0 & 0 & 1_m
\end{array}\right]\text{ mod }p_v^{n_v}\right\}.
\]

It is well known, see for example \cite[page 251]{PR}, that we have a finite decomposition 
\[
G(\mathbb{A})=\bigcup_{j}G(\Q)t_jK_1(\mathfrak{n})G(\R).
\]
Moreover, thanks to the weak approximation which is valid for our group (see \cite[Proposition 7.11]{PR}), we can take $t_j$ such that $(t_j)_v=1$ for $v|\mathfrak{n}$ (compare with \cite[Lemma 8.12]{Sh97}). For finite places $v$ not in the support of $\mathfrak{n}$ the Iwasawa decomposition is valid and hence we can take $t_j$ to be upper triangular. Let $\Gamma_1^j=t_jK_1(\mathfrak{n})t_j^{-1}\cap G(\Q)$. We can take $t_0=1$ so that
\[
\Gamma_1^0=\Gamma_1(\mathcal{N})=\left\{\gamma=\left[\begin{array}{ccc}
a & b & c\\
g & e & f\\
h & l & d
\end{array}\right]\in G(\mathcal{O}):\gamma\equiv\left[\begin{array}{ccc}
1_m & \ast & \ast\\
0 & 1_r & \ast\\
0 & 0 & 1_m
\end{array}\right]\text{ mod } \mathcal{N}\right\}.
\]

\subsection{Symmetric spaces}

We start with a rather general setting. Let $\mathfrak{i}$ be any embedding $M_n(\mathbb{H})\to M_{2n}(\C)$. Then by the Skolem-Noether theorem there exists $\alpha\in M_{2n}(\C)$ with $\alpha\alpha^{\ast}=1$ such that $\transpose{\mathfrak{i}}(x)=\alpha\mathfrak{i}(x^{\ast})\alpha^{-1}$. Let $\Phi\in\GL_n(\mathbb{B})$ be a skew-hermitian form similar to $\phi$ above, that is $\Phi = \gamma^* \phi \gamma$ for some $\gamma \in GL_n(\mathbb{B})$. Then the group $G(\R)$ is isomorphic to
\[
\mathcal{G}=\{g\in\GL_{2n}(\C):g^{\ast}Hg=H,\transpose{g}Kg=K\},
\]
with $H=\mathfrak{i}(\Phi),K=\alpha^{-1}\mathfrak{i}(\Phi)$. We call it a realization of $G(\R)$. Suppose we are given two such data $(\mathfrak{i}_1,\Phi_1,H_1,K_1,\mathcal{G}_1)$ and $(\mathfrak{i}_2,\Phi_2,H_2,K_2,\mathcal{G}_2)$ with $\Phi_1=S^{\ast}\Phi_2 S$. Again by Skolem-Noether there exists $\beta$ with $\beta\beta^{\ast}=1$ such that $\mathfrak{i}_1(x)=\beta^{-1}\mathfrak{i}_2(x)\beta$. Put $R=\mathfrak{i}_2(S)\beta$ then $H_1=R^{\ast}H_2R,K_1=\transpose{R}K_2R$. Therefore $g\mapsto RgR^{-1}$ gives isomorphism $\mathcal{G}_1\cong\mathcal{G}_2$. 

Following \cite{PS}, we will define the associated symmetric space via its Borel embedding into its compact dual symmetric space. In our case we have that the semisimple compact dual of our group is the group $\mathrm{SO}(2n)$ (see \cite[page 330]{Helgason}), and the corresponding dual symmetric space is $\mathrm{SO}(2n)/\mathrm{U}(n)$. This space may be identified (see for example \cite[page 6]{Sh87}) with the space  $V = L/ GL_{n}(\mathbb{C})$ where 
\[
L = \{ U \in\C_n^{2n}: \,\,\, \transpose{U}KU=0\}.
\]
We set
\[
\Omega=\{U\in\C_{2n}^{n}:-iU^{\ast}HU>0,\transpose{U}KU=0\}\subset L,
\]
with the action of $\GL_n(\C)$ by right multiplication and $\mathcal{G}$ by left multiplication. The symmetric space $\mathcal{H}$ is defined as
\[
\mathcal{H}=\{z\in\C_n^n:U(z)\in\Omega\}, \,\,\,\,U(z):=\left[\begin{array}{c}
z\\
u_0
\end{array}\right],
\]
for some fixed suitable $u_0$, which we make explicit later. The following lemma is a direct consequence of our definition for $\mathcal{H}$.

\begin{lem}
There is a bijection $\mathcal{H}\times\GL_n(\C)\to\Omega$ given by $z\times\lambda=U(z)\lambda$.
\end{lem}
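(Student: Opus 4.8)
The plan is to check the three parts of the bijectivity statement in turn; all of them are essentially formal, the genuine point being hidden in surjectivity. For well-definedness I would observe that $\Omega$ is stable under right multiplication by $\GL_n(\C)$: for $U\in\Omega$ and $\lambda\in\GL_n(\C)$ one has $\transpose{(U\lambda)}K(U\lambda)=\transpose{\lambda}\bigl(\transpose{U}KU\bigr)\lambda=0$ and $-i(U\lambda)^{\ast}H(U\lambda)=\lambda^{\ast}\bigl(-iU^{\ast}HU\bigr)\lambda>0$, since conjugating a positive definite hermitian matrix by an element of $\GL_n(\C)$ preserves positive definiteness; together with the defining property $U(z)\in\Omega$ for $z\in\mathcal{H}$ this gives $U(z)\lambda\in\Omega$. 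For injectivity I would compare bottom blocks: if $U(z)\lambda=U(z')\lambda'$ then $u_0\lambda=u_0\lambda'$, and since the $u_0$ fixed in the explicit model lies in $\GL_n(\C)$ this forces $\lambda=\lambda'$; cancelling $\lambda$ and comparing top blocks yields $z=z'$.

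The substance is surjectivity. Given $W\in\Omega$, write it as a block column $W=\begin{bmatrix}W_1\\W_2\end{bmatrix}$ with $W_1,W_2\in\C_n^n$. The key claim is that $W_2\in\GL_n(\C)$. First, $-iW^{\ast}HW>0$ already forces $W$ to have trivial kernel, since $v^{\ast}(-iW^{\ast}HW)v=0$ would hold for any $v$ with $Wv=0$. If now $W_2v=0$ for some $v\in\C^n$, then $Wv=\begin{bmatrix}W_1v\\0\end{bmatrix}$, so $0<v^{\ast}\bigl(-iW^{\ast}HW\bigr)v=-i(Wv)^{\ast}H(Wv)$; feeding in the explicit shape of $H=\mathfrak{i}(\Phi)$ (congruent to $\mathfrak{i}(\phi)$) together with the chosen $u_0$ rules this out, forcing $v=0$. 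Granting $W_2\in\GL_n(\C)$, set $\lambda:=u_0^{-1}W_2\in\GL_n(\C)$ and $z:=W_1W_2^{-1}u_0$; then $W\lambda^{-1}=U(z)$, and $U(z)=W\lambda^{-1}\in\Omega$ by the $\GL_n(\C)$-stability of $\Omega$, so $z\in\mathcal{H}$ and $W=U(z)\lambda$ lies in the image.

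The only step that is not pure bookkeeping is the invertibility of the bottom block $W_2$, and this is precisely where the explicit normalisations of $u_0$, $H$ and $K$ enter; once $u_0$ is pinned down in the explicit model it reduces to a short linear-algebra computation with the block decomposition of $H=\mathfrak{i}(\Phi)$.
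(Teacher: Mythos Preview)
The paper does not give a proof of this lemma at all; it merely remarks that it is ``a direct consequence of our definition for $\mathcal{H}$'' and moves on. Your argument is correct and supplies exactly the details the paper suppresses: well-definedness via $\GL_n(\C)$-stability of $\Omega$, injectivity by cancelling the invertible bottom block $u_0$, and surjectivity by showing the bottom block $W_2$ of any $W\in\Omega$ is invertible.

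One small sharpening you may wish to make: in each of the explicit realisations the paper uses (for $\mathfrak{Z}$, $\mathfrak{H}$, $\mathfrak{B}$), the top-left $n\times n$ block of $H$ vanishes. Hence if $W_2v=0$ and $Wv=\begin{bmatrix}W_1v\\0\end{bmatrix}$, one gets $v^{\ast}(-iW^{\ast}HW)v=-i(W_1v)^{\ast}H_{11}(W_1v)=0$, contradicting positive definiteness unless $v=0$. This is the ``short linear-algebra computation'' you allude to, and it is the reason the word ``suitable'' appears in the paper's phrase ``for some fixed suitable $u_0$'': the lemma is not asserted for an arbitrary realisation, but for the ones actually written down, where $H_{11}=0$ and $u_0\in\GL_n(\C)$.
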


Note that $\mathcal{G}$ acts on $\Omega$ by left multiplication. By the above lemma, it follows that for any element $\alpha\in \mathcal{G}$, we can find a $z'\in\mathcal{H}$ and an $\lambda(\alpha,z)\in\GL_n(\C)$ such that
\[
\alpha U(z)=U(z')\lambda(\alpha,z).
\]
We then define the action of $G(\R)$ on $\mathcal{H}$ by $\alpha.z:=\alpha z:=z'$ and $\lambda(\alpha,z)$  satisfies the cocycle relation
\[
\lambda(\alpha_1\alpha_2,z)=\lambda(\alpha_1,\alpha_2 z)\lambda(\alpha_2,z)\text{ for }\alpha_1,\alpha_2\in \mathcal{G},z\in\mathcal{H}.
\]
We set $j(\alpha,z):=\det(\lambda(\alpha,z))\in\C^{\times}$. We call $\lambda(\alpha,z)$ or $j(\alpha,z)$ automorphy factors. More explicitly, write $\alpha=\left[\begin{array}{cc}
a & b\\
c & d
\end{array}\right]$,
\[
\alpha U(z)=\left[\begin{array}{c}
az+bu_0\\
cz+du_0
\end{array}\right]=\left[\begin{array}{c}
(az+bu_0)(cz+du_0)^{-1}u_0\\
u_0
\end{array}\right]u_0^{-1}(cz+du_0).
\]
That is, $\alpha z=(az+bu_0)(cz+du_0)^{-1}u_0$,and $\lambda(\alpha,z)=u_0^{-1}(cz+du_0)$.

For $z_1,z_2\in\mathcal{H}$, we set 
\[
\eta(z_1,z_2):=iU(z_1)^{\ast}HU(z_2), \,\,\delta(z_1,z_2):=\det(\eta(z_1,z_2))\,\, \text{and} \,\,\eta(z):=\eta(z,z),\delta(z):=\delta(z,z).
\]

We now note that
\[
U(z_1)^{\ast}HU(z_2)=\lambda(\alpha,z_1)^{\ast}U(\alpha z_1)^{\ast}HU(\alpha z_2)\lambda(\alpha,z_2),
\]
and
\[
iU(\alpha z_1)^{\ast}HU(\alpha z_2)=\left[\begin{array}{cc}\eta(\alpha z_1,\alpha z_2) & \ast\\
\ast & \ast
\end{array}\right],iU(z_1)^{\ast}HU( z_2)=\left[\begin{array}{cc}\eta(z_1,z_2) & \ast\\
\ast & \ast
\end{array}\right].
\]
In particular we obtain that
\[
\lambda(\alpha,z_1)^{\ast}\eta(\alpha z_1,\alpha z_2)\lambda(\alpha,z_2)=\eta(z_1,z_2),
\]
and after taking the determinant, we have
\[
\overline{j(\alpha,z_1)}\delta(\alpha z_1,\alpha z_2)j(\alpha,z_2)=\delta(z_1,z_2).
\]
In particular,
\[
\lambda(\alpha,z)^{\ast}\eta(\alpha z)\lambda(\alpha,z)=\eta(z),\,\,\,\delta(\alpha z)=|j(\alpha,z)|^{-2}\delta(z).
\]

We now discuss the relation between different realizations of the symmetric space $\mathcal{H}$. Given $H_1,K_1$ and $H_2,K_2$ as above, we have seen at the beginning of this subsection that we can find an $R$ such that $H_1=R^{\ast}H_2R,K_1=\transpose{R}K_2R$. We then have an isomorphism $\Omega_1\cong\Omega_2$ given by $U\mapsto RU$ which induces isomorphism $\rho:\mathcal{H}_1\cong\mathcal{H}_2$. Indeed, for $z_1\in\mathcal{H}_1$, there exists some $z_2\in\mathcal{H}_2,\mu(z_1)\in\GL_n(\C)$ such that 
\begin{equation} \label{The matrix R}
R\left[\begin{array}{c}z_1\\u_{01}\end{array}\right]=\left[\begin{array}{c}z_2\\u_{02}\end{array}\right]\mu(z_1),
\end{equation}
and the isomorphism can be given by $\rho(z_1)=z_2$.

In the following lemma we write $\rho$ also for the isomorphism $\mathcal{G}_1 \rightarrow \mathcal{G}_2$ given by $\rho(g_1):= R g_1 R^{-1}$.

\begin{lem} \label{comparison}
Let $\rho:\mathcal{G}_1\to \mathcal{G}_2,\rho:\mathcal{H}_1\to\mathcal{H}_2$ given as above. Then\\
(1) $\rho(\alpha z)=\rho(\alpha)\rho(z)$ with $\alpha\in \mathcal{G}_1,z\in\mathcal{H}_1$;\\
(2) $\lambda(\rho(\alpha),\rho(z))=\mu(\alpha z)\lambda(\alpha,z)\mu(z)^{-1}$;\\
(3) $\eta(\rho(z_1),\rho(z_2))=\widehat{\mu(z_1)}\eta(z_1,z_2)\mu(z_2)^{-1}$ for $z_1,z_2\in\mathcal{H}_1$.
\end{lem}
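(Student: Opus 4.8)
The plan is to verify each of the three identities directly from the defining relations, exploiting the single matrix $R$ that intertwines the two realizations via $H_1=R^{\ast}H_2R$, $K_1=\transpose{R}K_2R$, together with equation \eqref{The matrix R}, which reads $R\,U_1(z_1)=U_2(\rho(z_1))\mu(z_1)$ with $U_i(z)=\transpose{[z\ \ u_{0i}]}$. First I would record the elementary fact that $R$ carries $\Omega_1$ onto $\Omega_2$: if $U\in\Omega_1$ then $\transpose{(RU)}K_2(RU)=\transpose{U}\transpose{R}K_2RU=\transpose{U}K_1U=0$ and $-i(RU)^{\ast}H_2(RU)=-iU^{\ast}R^{\ast}H_2RU=-iU^{\ast}H_1U>0$, so $RU\in\Omega_2$; this is what makes $\rho$ well defined on $\mathcal{H}_1$ in the first place, and the map $\mu(z_1)\in\GL_n(\C)$ in \eqref{The matrix R} is exactly the $\GL_n(\C)$-component produced by the bijection of the first lemma applied in the second realization.

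For part (1), I would start from $\alpha U_1(z)=U_1(\alpha z)\lambda(\alpha,z)$ in $\mathcal{G}_1$, multiply on the left by $R$, and insert $R^{-1}R$: $\rho(\alpha)R\,U_1(z)=R\,U_1(\alpha z)\lambda(\alpha,z)$, i.e. $\rho(\alpha)\,U_2(\rho(z))\mu(z)=U_2(\rho(\alpha z))\mu(\alpha z)\lambda(\alpha,z)$ after applying \eqref{The matrix R} on both sides. On the other hand, by the definition of the $\mathcal{G}_2$-action, $\rho(\alpha)\,U_2(\rho(z))=U_2(\rho(\alpha)\rho(z))\lambda(\rho(\alpha),\rho(z))$. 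Comparing these two expressions for $\rho(\alpha)U_2(\rho(z))\mu(z)$ and using the uniqueness in the first lemma (the $\mathcal{H}$-component and the $\GL_n(\C)$-component of an element of $\Omega$ are determined), we read off simultaneously that $\rho(\alpha z)=\rho(\alpha)\rho(z)$, which is (1), and that $\lambda(\rho(\alpha),\rho(z))=\mu(\alpha z)\lambda(\alpha,z)\mu(z)^{-1}$, which is (2). So (1) and (2) come out of the same computation.

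For part (3), I would simply substitute: $\eta(\rho(z_1),\rho(z_2))$ is the top-left $n\times n$ block of $i\,U_2(\rho(z_1))^{\ast}H_2\,U_2(\rho(z_2))$. From \eqref{The matrix R}, $U_2(\rho(z_j))=R\,U_1(z_j)\mu(z_j)^{-1}$, hence
\[
i\,U_2(\rho(z_1))^{\ast}H_2\,U_2(\rho(z_2))=\widehat{\mu(z_1)}\,\bigl(i\,U_1(z_1)^{\ast}R^{\ast}H_2R\,U_1(z_2)\bigr)\,\mu(z_2)^{-1}=\widehat{\mu(z_1)}\,\bigl(i\,U_1(z_1)^{\ast}H_1\,U_1(z_2)\bigr)\,\mu(z_2)^{-1},
\]
using $\widehat{\mu}=(\mu^{\ast})^{-1}$ for the left factor. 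Taking top-left blocks and noting that left multiplication by $\widehat{\mu(z_1)}$ and right multiplication by $\mu(z_2)^{-1}$ act block-diagonally compatibly with the $U(z)=\transpose{[z\ u_0]}$ splitting (the first $n$ columns/rows correspond to the $z$-coordinate), we get $\eta(\rho(z_1),\rho(z_2))=\widehat{\mu(z_1)}\,\eta(z_1,z_2)\,\mu(z_2)^{-1}$.

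The only genuinely delicate point — and the one I would be most careful about — is the bookkeeping with the block structure: in passing from the full $2n\times n$ matrix identity to the $n\times n$ statement about $\eta$ (and from $\lambda$ living in $\GL_n(\C)$ to the automorphy factor equations), one must check that the relevant multiplications really do preserve the decomposition $U(z)=\transpose{[z\ \ u_0]}$, i.e. that $\mu(z)$ acts on the $z$-block and the $u_0$-block uniformly. This is automatic because $\mu(z)$ multiplies $U(z)$ on the right as a whole, but it is exactly the place where the argument is usually written too fast; everything else is a direct substitution using $H_1=R^{\ast}H_2R$, $K_1=\transpose{R}K_2R$, the definition $\rho(g)=RgR^{-1}$, and the uniqueness clause of the first lemma.
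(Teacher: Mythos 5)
Your proof is correct and essentially identical to the paper's: parts (1) and (2) are read off simultaneously from the same chain of substitutions (the relation $RU_1(z)=U_2(\rho(z))\mu(z)$, the two group actions, and the uniqueness of the $\mathcal{H}\times\GL_n(\C)$ decomposition of $\Omega$), and (3) is exactly the direct substitution the paper dismisses as ``computed similarly by definition of $\eta$''. The only superfluous point is your caution about top-left blocks in (3): since $U(z)$ is $2n\times n$, the product $iU(z_1)^{\ast}HU(z_2)$ is already the full $n\times n$ matrix $\eta(z_1,z_2)$, so no block extraction (and hence no compatibility check) is actually needed.
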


\begin{proof}
(1) It suffices to prove that $\left[\begin{array}{c}\rho(\alpha z)\\u_{02}\end{array}\right]=\left[\begin{array}{c}\rho(\alpha)\rho(z)\\u_{02}\end{array}\right]$. By definition of the isomorphism and action, 
\[
\left[\begin{array}{c}\rho(\alpha z)\\u_{02}\end{array}\right]=R\left[\begin{array}{c}
\alpha z\\
u_{01}
\end{array}\right]\mu(\alpha z)^{-1}=
R\alpha\left[\begin{array}{c}
z\\
u_{01}
\end{array}\right]\lambda(\alpha,z)^{-1}\mu(\alpha z)^{-1}=
\]
\[
\rho(\alpha)\left[\begin{array}{c}
\rho(z)\\
u_{02}
\end{array}\right]\mu(z)\lambda(\alpha,z)^{-1}\mu(\alpha z)^{-1}=\left[\begin{array}{c}
\rho(\alpha)\rho(z)\\
u_{02}
\end{array}\right]\lambda(\rho(\alpha),\rho(z))\mu(z)\lambda(\alpha,z)^{-1}\mu(\alpha z)^{-1}.
\]
We must have $\lambda(\rho(\alpha),\rho(z))\mu(z)\lambda(\alpha,z)^{-1}\mu(\alpha z)^{-1}=1$ and our desired result follows which we also obtain (2). (3) can be computed similarly by definition of $\eta$.
\end{proof}

We now apply the above considerations to some explicit realisations of $G(\R)$. Note that the map $\mathfrak{i}$ defined above induces the following isomorphism on $\Q$-groups
\[
\mathfrak{i}:G\stackrel{\sim}\longrightarrow\mathfrak{G}=\{g\in\GL_{2n}(\mathbb{K}):g^{\ast}\Phi g=\Phi,\transpose{g}\Psi g=\Psi\},
\]
\[
\Phi=\left[\begin{array}{cccc}
0 & 0 & 0 & -1_{2m}\\
0 & 0 & -1_r & 0\\
0 & 1_r & 0 & 0\\
1_{2m} & 0 & 0 & 0
\end{array}\right],\Psi=\left[\begin{array}{cccc}
0 & 0 & 0 & J_m'I_m'\\
0 & -\alpha^{-1} & 0 & 0\\
0 & 0 & 1_r & 0\\
-J_m'I_m' & 0 & 0 & 0
\end{array}\right].
\]
This induces following isomorphism on $\R$-groups
\[
\mathfrak{i}:G(\R)\stackrel{\sim}\longrightarrow G_{\infty}:=\{g\in\GL_{2n}(\C):g^{\ast}\phi_{\infty}g=\phi_{\infty},\transpose{g}\psi_{\infty}g=\psi_{\infty}\},
\] 
\[
\phi_{\infty}=\left[\begin{array}{cccc}
0 & 0 & 0 & -1_{2m}\\
0 & 0 & -1_r & 0\\
0 & 1_r & 0 & 0\\
1_{2m} & 0 & 0 & 0
\end{array}\right],\psi_{\infty}=\left[\begin{array}{cccc}
0 & 0 & 0 & J_m'\\
0 & 1_r & 0 & 0\\
0 & 0 & 1_r & 0\\
-J_m' & 0 & 0 & 0
\end{array}\right].
\]
Let $K_{\infty}$ be a maximal compact subgroup of $G(\R)$. As in last subsection
\[
\Omega=\{U\in\C_n^{2n}:-iU^{\ast}\phi_{\infty}U>0,\transpose{U}\psi_{\infty}U=0\},
\]
and define the symmetric space by
\[
\mathfrak{Z}=\mathfrak{Z}_n=\mathfrak{Z}_{m,r}=\{z\in\C_n^n:U(z)\in\Omega\},U(z)=\left[\begin{array}{c}
z\\
u_0
\end{array}\right],u_0=\left[\begin{array}{cc}
0 & 1_r\\
1_{2m} & 0
\end{array}\right].
\]
Explicitly,
\[
\mathfrak{Z}=\left\{z=(u,v,w):=\left[\begin{array}{cc}
u & v\\
w\transpose{v}J_m' & w
\end{array}\right]:\begin{array}{c}
u\in\C_{2m}^{2m},v\in\C_r^{2m},w\in\C_r^r,i(z^{\ast}-z)>0,
\\
\transpose{w}w+1=0,uJ_m'+v\transpose{v}-J_m'\transpose{u}=0.
\end{array}\right\}.
\]
The action of $G_{\infty}$ on $\mathfrak{Z}$ is given by 
\[
g z=(az+bu_0)(cz+du_0)^{-1}u_0,\lambda(g,z)=u_0^{-1}(cz+du_0),g=\left[\begin{array}{cc}
a & b\\
c & d
\end{array}\right]\in G_{\infty}.
\]

For $z_1,z_2\in\mathfrak{Z}$, we set $\eta(z_1,z_2)=i(z_1^{\ast}-z_2),\delta(z_1,z_2)=\det(\eta(z_1,z_2))$ and $\eta(z)=\eta(z,z),\delta(z)=\delta(z,z)$. We will take $z_0=i\cdot 1_n$ to be the origin of $\mathfrak{Z}$ and $K_{\infty}$ the subgroup of $G_{\infty}$ fixing $z_0$. Then $g\mapsto\lambda(g,z_0)$ gives an isomorphsim $K_{\infty}\cong U(n)=\{g\in\GL_n(\C):g^{\ast}g=1_n\}$ and our symmetric space $\mathfrak{Z}\cong G_{\infty}/K_{\infty}$. We note that we are using the same notation $K_{\infty}$ for maximal subgroup of $G_{\infty}$ and its preimage in $G(\R)$.

We now give another two useful realizations. Note that $G$ is isomorphic to 
\[
G'=\{g\in\GL_n(\mathbb{B}):g^{\ast}\phi'g=\phi'\},\phi'=\left[\begin{array}{ccc}
\zeta\cdot 1_m & 0 & 0\\
0 & \zeta\cdot 1_r & 0\\
0 & 0 & -\zeta\cdot 1_m
\end{array}\right].
\]
By changing rows and colomns the map $\mathfrak{i}$ induces isomorphism
\[
\mathfrak{i}:G'\stackrel{\sim}\longrightarrow\mathfrak{G}'=\{g\in\GL_{2n}(\mathbb{K}):g^{\ast}\Phi' g=\Phi',\transpose{g}\Psi' g=\Psi'\},
\]
\[
\Phi'=J_n,\Psi'=\mathrm{diag}[1,1,1,-\alpha,-\alpha,-\alpha],
\]
and
\[
\mathfrak{i}:G'(\R)\cong G_{\infty}':=\{g\in\GL_{2n}(\C):g^{\ast}J_ng=J_n,\transpose{g}g=1_{2n}\}.
\]
Take $u_0=1$, the symmetric space associated to this group is
\[
\mathfrak{H}=\mathfrak{H}_n=\{z\in\C_n^n:\transpose{z}z+1=0,i(z^{\ast}-z)>0\}.
\]
This is an unbounded realization of type D domain in \cite{LKW}. The action of $G_{\infty}'$ on $\mathfrak{H}$ and the automorphy factor is given by
\[
gz=(az+b)(cz+d)^{-1},\lambda(g,z)=cz+d,g=\left[\begin{array}{cc}
a & b\\
c & d
\end{array}\right].
\]
For $z_1,z_2\in\mathfrak{H}$, we set $\eta(z_1,z_2)=i(z_1^{\ast}-z_2)$. We take $z_0=i_n$ to be the origin of $\mathfrak{H}$ and $K_{\infty}'$ the subgroup of $G_{\infty}'$ fixing $z_0$. Since $\eta(gz_0)=\eta(z_0)=2$ for $g\in K_{\infty}$, $g\mapsto\lambda(g,z_0)$ gives an isomorphism $K_{\infty}'\cong U(n)$ and thus $\mathfrak{H}\cong G_{\infty}'/K_{\infty}'$.

Let $T'=\frac{1}{\sqrt{2}}\left[\begin{array}{cc}
i & -i\\
1 & 1
\end{array}\right]$ and sending $g\mapsto T^{\prime-1}gT'$ we have isomorphism
\[
\mathfrak{i}_{\infty}'':G_{\infty}'\stackrel{\sim}\longrightarrow G_{\infty}''=\{g\in\GL_{2n}(\C):g^{\ast}\phi_{\infty}''g=\phi_{\infty}'',\transpose{g}\psi_{\infty}''g=\psi_{\infty}''\},
\]
with
\[
\phi_{\infty}''=\left[\begin{array}{cc}
i_n & 0\\
0 & -i_n
\end{array}\right],\psi_{\infty}''=\left[\begin{array}{cc}
0 & -i_n\\
-i_n & 0
\end{array}\right].
\]
Take $u_0=1$ the symmetric space associated to this group is defined as
\[
\mathfrak{B}=\mathfrak{B}_n=\{z\in\C_n^n:\transpose{z}=-z,zz^{\ast}<1_n\}.
\]
This is a bounded domain of type $\mathfrak{R}_{\mathrm{III}}$ in \cite{Hua}. The action of $G_{\infty}''$ on $\mathfrak{B}$ and the automorphy factor is given by
\[
gz=(az+b)(cz+d)^{-1},\lambda(g,z)=cz+d,g=\left[\begin{array}{cc}
a & b\\
c & d
\end{array}\right].
\]
For $z_1,z_2\in\mathfrak{H}$, we set $\eta(z_1,z_2)=i(z_1^{\ast}z_2-1)$. We take $z_0=0$ to be the origin of $\mathfrak{B}$ and $K_{\infty}''$ the subgroup of $G_{\infty}''$ fixing $z_0$. Since $\eta(gz_0)=\eta(z_0)=-i$ for $g\in K_{\infty}$, $g\mapsto\lambda(g,z_0)$ gives an isomorphism $K_{\infty}''\cong U(n)$ and thus $\mathfrak{H}\cong G_{\infty}''/K_{\infty}''$. The relation between $\mathfrak{H}$ and $\mathfrak{B}$ can be given explicitly by Cayley transform
\[
\mathfrak{H}\stackrel{\sim}\longrightarrow\mathfrak{B}:z\mapsto(z-i)(z+i)^{-1}.
\]

Let $z_1,z_2\in\mathfrak{B}_n,\alpha\in G(\R)$ as above and $dz=(dz_{hk})$ be a matrix of the same shape as $z\in\C_n^n$ whose entries are $1$-forms $dz_{hk}$. Comparing
\[
\left[\begin{array}{cc}
z_1 & 1\\
1 & -\overline{z}_1
\end{array}\right]^{\ast}\left[\begin{array}{cc}
1_n & 0\\
0 & -1_n
\end{array}\right]\left[\begin{array}{cc}
z_2 & 1\\
1 & -\overline{z}_2
\end{array}\right]=\left[\begin{array}{cc}
z_1^{\ast}z_2-1 & z_1^{\ast}+\overline{z}_2\\ 
z_2+\transpose{z}_1 & 1-\transpose{z}_1\overline{z}_2
\end{array}\right]=\left[\begin{array}{cc}z_1^{\ast}z_2-1 & z_1^{\ast}-z_2^{\ast}\\ 
z_2-z_1 & 1-\transpose{z}_1\overline{z}_2\end{array}\right],
\]
\[
\left[\begin{array}{cc}
\alpha z_1 & 1\\
1 & -\overline{\alpha z}_1
\end{array}\right]^{\ast}\left[\begin{array}{cc}
1_n & 0\\
0 & -1_n
\end{array}\right]\left[\begin{array}{cc}
\alpha z_2 & 1\\
1 & -\overline{\alpha z}_2
\end{array}\right]=\left[\begin{array}{cc}(\alpha z_1)^{\ast}(\alpha z_2)-1 & (\alpha z_1)^{\ast}-(\alpha z_2)^{\ast}\\ 
\alpha z_2-\alpha z_1 & 1-\transpose{(\alpha z}_1)(\overline{\alpha z}_2)\end{array}\right],
\]
and using the fact (which can be obtained from the property of $U(z)$)
\[
\alpha\left[\begin{array}{cc}
z & 1\\
1 & -\overline{z}
\end{array}\right]=\left[\begin{array}{cc}
\alpha z & 1\\
1 & -\overline{\alpha z}
\end{array}\right]\left[\begin{array}{cc}
\lambda(\alpha,z) & 0\\
0 &\overline{\lambda(\alpha,z)} 
\end{array}\right],
\]
we have
\[
\alpha z_2-\alpha z_1=\transpose{\lambda}(\alpha,z_1)^{-1}(z_2-z_1)\lambda(\alpha,z_2)^{-1}.
\]
Therefore, 
\[
d(\alpha z)=\transpose{\lambda(\alpha,z)}^{-1}\cdot dz\cdot \lambda(\alpha,z)^{-1}.
\]
Since the jacobian of the map $z\mapsto\alpha z$ is $j(\alpha,z)^{-n+1}$, the differential form
\[
\mathbf{d}z=\delta(z)^{-n+1}\prod_{h\leq k}[(i/2)dz_{hk}\wedge d\overline{z}_{hk}],
\]
is an invariant measure. If we have another realizaton $\mathcal{H}$ (e.g. $\mathfrak{Z,H}$) with identification $\rho:\mathcal{H}\to\mathcal{B}$, we then define $\mathbf{d}z:=\mathbf{d}(\rho(z))$ with $z\in\mathcal{H}$ to be the differential form on $\mathcal{H}$. Clearly, this is also an invariant measure.

\subsection{Doubling Embedding}

We keep the notation as before and consider two groups 
\[
G_{n_1}=\left\{g\in\GL_{n_1}(\mathbb{B}):g^{\ast}\phi_1 g=\phi_1\right\},\phi_1=\left[\begin{array}{ccc}
0 & 0 & -1_{m_1}\\
0 & \zeta\cdot 1_r & 0\\
1_{m_1} & 0 & 0
\end{array}\right],
\]
\[
G_{n_2}=\left\{g\in\GL_{n_2}(\mathbb{B}):g^{\ast}\phi_2 g=\phi_2\right\},\phi_2=\left[\begin{array}{ccc}
0 & 0 & -1_{m_2}\\
0 & \zeta\cdot 1_r & 0\\
1_{m_2} & 0 & 0
\end{array}\right],
\]
with $n_1=2m_1+r,n_2=2m_2+r$. We always assume $m_1\geq m_2>0$. We set $N=n_1+n_2 = 2m_1+2m_2+2r$, and consider the map
\[
G_{n_1}\times G_{n_2}\to G^{\omega}=\{g\in\GL_{N}(\mathbb{B}):g^{\ast}\omega g=\omega\},\omega=\left[\begin{array}{cc}
\phi_1 & 0\\
0 & -\phi_2
\end{array}\right],
\]
by sending $g_1\times g_2\mapsto\mathrm{diag}[g_1,g_2]$. Note that $R^{\ast}\omega R=J_{N/2} := \left[\begin{array}{cc}
 0 & -1_{N/2}\\
1_{N/2}  & 0
\end{array}\right]$,

 with
\[
R=\left[\begin{array}{cccccc}
1_{m_1} & 0 & 0 & 0 & 0 & 0\\
0 & 1/2 & 0 & 0 &-\zeta^{-1} & 0\\
0 & 0 & 0 & 1_{m_2} & 0 & 0\\
0 & 0 & -1_{m_1} & 0 & 0 & 0\\
0 & -1/2 & 0 & 0 & -\zeta^{-1}  & 0\\
0 & 0 & 0 & 0 & 0 & 1_{m_2}
\end{array}\right].
\]
Composing the above map with $g\mapsto R^{-1}gR$ we obtain an embedding
\[
\rho:G_{n_1}\times G_{n_2}\to G^{\omega}\to G_N=\{g\in\GL_{N}(\mathbb{B}):g^{\ast}J_{N/2} g=J_{N/2}\}.
\]
We can thus view $G_{n_1}\times G_{n_2}$ as a subgroup of $G_N$. To ease notation, we write $G_{n_1}\times G_{n_2}$ as its image in $G_N$ under $\rho$ and $\beta\times\gamma$ for $\beta\in G_{n_1},\gamma\in G_{n_2}$ as an element in $G_N$ under the embedding $\rho$. 

Now we consider a special case of this embedding, namely the case where $m_1=m_2=m,n_1=n_2=n$. To ease the notation we always omit the subscript `$n$' and keep the subscript $N=2n$. The embedding $\rho$ then induces
\[
\rho:G_{\infty}\times G_{\infty}\to G_{N\infty},g_1\times g_2\mapsto R^{-1}\mathrm{diag}[g_1,g_2]R,
\]
with
\[
R=R_{\infty}=\left[\begin{array}{cccccccc}
1_{2m} & 0 & 0 & 0 & 0 & 0 & 0 & 0\\
0 & 1/2 & 0 & 0 & 0 & 0 & -1 & 0\\
0 & 0 & 1/2 & 0 & 0 & 1 & 0 & 0\\
0 & 0 & 0 & 0 & 1_{2m} & 0 & 0 & 0\\
0 & 0 & 0 & -1_{2m} & 0 & 0 & 0 & 0\\
0 & -1/2 & 0 & 0 & 0 & 0 & -1 & 0\\
0 & 0 & -1/2 & 0 & 0 & 1 & 0 & 0\\
0 & 0 & 0 & 0 & 0 & 0 & 0 & 1_{2m}
\end{array}\right],
\]
where the entries with $\pm1,\pm1/2$ should be understood as $\pm I_r,\pm \frac{1}{2}I_r$.
Let $\mathfrak{Z},\mathfrak{Z}_N$ be symmetric spaces associated to $G_{\infty},G_{N\infty}$. We are now going to define an associated embedding of symmetric space $\iota:\mathfrak{Z}\times\mathfrak{Z}\to\mathfrak{Z}_N$. We first define the embedding 
\[
\Omega_1\times\Omega_2\to\Omega_N,U_1\times U_2\mapsto R^{-1}\left[\begin{array}{cc}
U_1 & 0\\
0 & \mathfrak{J}\overline{U_2}
\end{array}\right],\mathfrak{J}=\left[\begin{array}{cccc}
J_{m}' & 0 & 0 & 0\\
0 & 0 & -1_r & 0\\
0 & 1_r & 0 & 0\\
0 & 0 & 0 & J_{m}'
\end{array}\right].
\]
Let $z_1= (u_1,v_1,w_1),z_2=(u_2,v_2,w_2) \in\mathfrak{Z}$. The image of $U(z_1)\times U(z_2) \in \Omega_1 \times \Omega_2 $ under this map is
\[
\left[\begin{array}{cccc}
u_1 & v_1 & 0 & 0\\
w_1\transpose{v}_1J_{m}' & w_1 & 0 & 1\\
0 & 1 & -\overline{w}_2v_2^{\ast}J_{m}' & -\overline{w}_2\\
0 & 0 & -J_{m}'\overline{u}_2 & -J_{m}'\overline{v}_2\\
1 & 0 & 0 & 0\\
0 & \frac{1}{2} & \frac{\overline{w}_2v_2^{\ast}J_{m}'}{2} & \frac{\overline{w}_2}{2}\\
\frac{-w_1\transpose{v}_1J_{m}'}{2} & \frac{-w_1}{2} & 0 & \frac{1}{2}\\
0 & 0 & J_{m}' & 0
\end{array}\right]=:\left[\begin{array}{c}
A(z_1,z_2)\\
B(z_1,z_2)
\end{array}\right].
\]
We then define the embedding of symmetric space as
\[
\iota:\mathfrak{Z}\times\mathfrak{Z}\to\mathfrak{Z}_N,\,\,\,\,z_1\times z_2\mapsto A(z_1,z_2)B(z_1,z_2)^{-1}S,
\]
where $S:=\mathrm{diag}[1,1/2,1/2,1]$. \newline

Explicitly, if we write $w^{-1}_0:=1+w_1\overline{w}_2,w_0':=1-w_1\overline{w}_2$, then 
\[
\iota(z_1,z_2)=
\]
\[
\left[\begin{array}{cccc}
u_1-v_1\overline{w}_2w_0w_1\transpose{v}_1J_{m}' & v_1w_1^{-1}w_0w_1 & -v_1\overline{w}_2w_0 & -v_1w_1^{-1}w_0w_1\overline{w}_2v_2^{\ast}\\
2w_0w_1\transpose{v}_1J_{m}' & 2w_0w_1 & w_0'w_0 & -2w_0w_1\overline{w}_2v_2^{\ast}\\
-2\overline{w}_2w_0w_1\transpose{v}_1J_{m}' & w_1^{-1}w_0'w_0w_1 & -2\overline{w}_2w_0 & -2w_1^{-1}w_0w_1\overline{w}_2v_2^{\ast}\\
-J_{m}'\overline{v}_2w_0w_1\transpose{v}_1J_{m}' & -J_{m}'\overline{v}_2w_0w_1 & -J_{m}'\overline{v}_2w_0 & -J_{m}'\overline{u}_2J_{m}^{\prime-1}+J_{m}'\overline{v_2}w_0w_1\overline{w}_2v_2^{\ast}
\end{array}\right].
\]

Here we note that we have ``normalised'' our embedding by $S$  so that $\iota$ maps the origin of $\mathfrak{Z}\times\mathfrak{Z}$ to the ``origin'' of $\mathfrak{Z}_N$. That is, $\iota(z_0\times z_0)=i\cdot 1_N=:Z_0$, where $z_0$, and $Z_0$ are the origins of $\mathfrak{Z}$ and $\mathfrak{Z}_N$ respectively.


For example in the case where $r=0$ then the embedding is quite simple, namely $\iota(z_1,z_2) = \diag[u_1,-u^*_2]$, where in the case of $r=1$ it is given by (note that in this case $w_1=w_2 =i$)
\[
\iota(z_1,z_2) = \left[\begin{array}{cccc}
u_1-\frac{1}{2}v_1\transpose{v}_1J_m' & \frac{1}{2}v_1 & -\frac{i}{2}v_1 & \frac{i}{2}v_1v_2^{\ast}\\
i\transpose{v}_1J_m' & i & 0 & -v_2^{\ast}\\
-\transpose{v}_1J_{m}' & 0 & i & iv_2^{\ast}\\
-\frac{i}{2}J_{m}'\overline{v}_2\transpose{v}_1J_{m}' & -\frac{i}{2}J_m'\overline{v}_2 & -\frac{1}{2}J_{m}'\overline{v}_2 & -u_2^{\ast}-\frac{1}{2}J_m'\overline{v}_2v_2^{\ast}
\end{array}\right].
\]

We now show that the embedding of the symmetric spaces is compatible with the embedding of the groups. 
\begin{prop} \label{diagonal embedding}
For $g_1,g_2\in G_{\infty},z_1,z_2\in\mathfrak{Z}$ we have\\
(1) $\iota(g_1z_1,g_2z_2)=\rho(g_1,g_2)\iota(z_1,z_2)$;\\
(2) $j(\rho(g_1,g_2),\iota(z_1,z_2))\det(B(z_1,z_2))=j(g_1,z_1)\overline{j(g_2,z_2)}\det(B(g_1z_1,g_2z_2))$;\\
(3) $\delta(\iota(z_1,z_2))=|\det(B(z_1,z_2))|^{-2}\delta(z_1)\delta(z_2)$.
\end{prop}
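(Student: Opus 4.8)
The strategy is to reduce all three statements to the single identity
\[
\rho(g_1,g_2)\left[\begin{array}{c}A(z_1,z_2)\\ B(z_1,z_2)\end{array}\right] = \left[\begin{array}{c}A(g_1z_1,g_2z_2)\\ B(g_1z_1,g_2z_2)\end{array}\right]\Lambda
\]
for a suitable $\Lambda\in\GL_N(\C)$, and then identify $\Lambda$. First I would unravel definitions: by construction $\left[\begin{smallmatrix}A(z_1,z_2)\\ B(z_1,z_2)\end{smallmatrix}\right]$ is the image of $U(z_1)\times U(z_2)$ under the map $\Omega_1\times\Omega_2\to\Omega_N$, $U_1\times U_2\mapsto R^{-1}\diag[U_1,\mathfrak{J}\overline{U_2}]$. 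Since $\rho(g_1,g_2)=R^{-1}\diag[g_1,g_2]R$, applying $\rho(g_1,g_2)$ on the left turns the image of $U(z_1)\times U(z_2)$ into the image of $g_1U(z_1)\times \overline{g_2}\,\overline{U(z_2)}$ — here one uses that $\mathfrak{J}\overline{g_2 U(z_2)} = \mathfrak{J}\overline{g_2}\,\overline{U(z_2)}$ and that conjugation and $\mathfrak J$ interact correctly with the relation $\overline{x}I_n'J_n' = I_n'J_n' x$ defining $G_\infty$ inside $\GL_{2n}(\C)$ (this is where the specific shape of $\mathfrak J$ and of $\phi_\infty,\psi_\infty$ enters). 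Now $g_1U(z_1)=U(g_1z_1)\lambda(g_1,z_1)$ by the defining property of the automorphy factor, and $\overline{g_2}\,\overline{U(z_2)}=\overline{U(g_2z_2)}\,\overline{\lambda(g_2,z_2)}$; feeding these back through the $\Omega$-embedding gives
\[
\rho(g_1,g_2)\left[\begin{array}{c}A(z_1,z_2)\\ B(z_1,z_2)\end{array}\right] = \left[\begin{array}{c}A(g_1z_1,g_2z_2)\\ B(g_1z_1,g_2z_2)\end{array}\right]\diag[\lambda(g_1,z_1),\overline{\lambda(g_2,z_2)}]
\]
after accounting for the fixed block permutation built into $\mathfrak J$; so $\Lambda = \diag[\lambda(g_1,z_1),\overline{\lambda(g_2,z_2)}]$ (this uses that the $2m$, $r$, $r$, $2m$ block partition is respected by $\lambda$, which follows from the block form of $u_0$).

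Granting this identity, part (1) is immediate: $\iota(z_1,z_2)=A(z_1,z_2)B(z_1,z_2)^{-1}S$, and from the displayed equation $A(g_1z_1,g_2z_2)B(g_1z_1,g_2z_2)^{-1} = AB^{-1}$ (the $\Lambda$ cancels), while on the other side the definition of the $G_N$-action $\rho(g_1,g_2).Z = (aZ+b)(cZ+d)^{-1}$ applied to $U(Z)=\left[\begin{smallmatrix}Z\\ u_0\end{smallmatrix}\right]$ is exactly the statement that $\rho(g_1,g_2)$ sends $A B^{-1}S$ to $(\text{new }A)(\text{new }B)^{-1}S$ up to the normalisation $S$; one checks the $S$ is consistent because $\iota$ was normalised so that $\iota(z_0,z_0)=Z_0$ and the action is by the same formula on both sides. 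For part (2): taking determinants, the definition of $j$ on $G_N$ gives $\lambda(\rho(g_1,g_2),\iota(z_1,z_2)) = u_0^{-1}\bigl(c\cdot\iota(z_1,z_2)+d\cdot u_0\bigr)$ where $\left[\begin{smallmatrix}a&b\\ c&d\end{smallmatrix}\right]=\rho(g_1,g_2)$; comparing the bottom block $B(g_1z_1,g_2z_2)$ of $\rho(g_1,g_2)\left[\begin{smallmatrix}A\\ B\end{smallmatrix}\right]$ with $c\,\iota(z_1,z_2)+d\,u_0$ (which differ by right multiplication by $B(z_1,z_2)$ and by the normalisation $S$) yields $\det B(g_1z_1,g_2z_2) = j(\rho(g_1,g_2),\iota(z_1,z_2))\det B(z_1,z_2)\cdot(\det\Lambda)^{-1}\cdot(\text{const})$, and since $\det\Lambda = j(g_1,z_1)\overline{j(g_2,z_2)}$ and the constant is absorbed by consistent normalisation, this is exactly (2). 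For part (3): by the transformation rule $\delta(\alpha z)=|j(\alpha,z)|^{-2}\delta(z)$ established for $\mathfrak Z_N$, we have $\delta(\iota(z_1,z_2)) = \delta(\rho(g_1,g_2)Z_0)\cdot(\ldots)$ — more directly, evaluate $\delta(\iota(z_1,z_2))$ from $\eta(\iota(z_1,z_2)) = iU(\iota(z_1,z_2))^\ast \phi_{N\infty} U(\iota(z_1,z_2))$, substitute $U(\iota(z_1,z_2)) = \left[\begin{smallmatrix}A\\ B\end{smallmatrix}\right]SB^{-1}\cdot(\text{scalar adjustment})$, use $R^\ast\omega R = J_{N/2}$ so that $\left[\begin{smallmatrix}A\\ B\end{smallmatrix}\right]^\ast\phi_{N\infty}\left[\begin{smallmatrix}A\\ B\end{smallmatrix}\right] = \diag[U_1,\mathfrak J\overline{U_2}]^\ast\omega_{\infty}\diag[U_1,\mathfrak J\overline{U_2}]$, which block-diagonalises into a piece involving $U(z_1)^\ast\phi_\infty U(z_1)$ (giving $\eta(z_1)$ up to the $\mathfrak J$/$\overline{(\cdot)}$ bookkeeping and a sign/conjugate) and similarly $\eta(z_2)$; taking determinants gives $\delta(\iota(z_1,z_2)) = |\det B(z_1,z_2)|^{-2}\delta(z_1)\delta(z_2)$ after checking $\det S$ and the scalar adjustments cancel.

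The main obstacle is the bookkeeping in the first step: verifying that $\mathfrak J$, complex conjugation, and the block permutation hidden in $R$ interact so that $\rho(g_1,g_2)$ really does carry the image of $U(z_1)\times U(z_2)$ to the image of $g_1z_1\times g_2z_2$, and that the residual factor is precisely $\diag[\lambda(g_1,z_1),\overline{\lambda(g_2,z_2)}]$ with the blocks in the right order — in particular one must track how $\overline{g_2}$ acting on $\overline{U(z_2)}$ reproduces the automorphy factor $\overline{\lambda(g_2,z_2)}$ rather than its transpose or inverse, which relies on the explicit relations $\transpose{\mathfrak i(x)} = J_n^{\prime-1}\mathfrak i(x^\ast)J_n'$ and $\mathfrak i(x)^\ast = I_n^{\prime-1}\mathfrak i(x^\ast)I_n'$ recorded in Section 2.1. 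Once that identity is pinned down, parts (1)–(3) are formal consequences of the definitions of the action, $j$, $\eta$, $\delta$ and the normalisations $S$, $u_0$, so I would not expect to grind through the explicit $4\times 4$ block matrix $\iota(z_1,z_2)$ displayed above except as a consistency check at the origin.
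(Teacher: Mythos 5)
Your plan is correct and, for parts (1) and (2), it is essentially the paper's own proof: the paper's argument consists precisely of the identity $\rho(g_1,g_2)\left[\begin{smallmatrix}A(z_1,z_2)\\ B(z_1,z_2)\end{smallmatrix}\right]=\left[\begin{smallmatrix}A(g_1z_1,g_2z_2)\\ B(g_1z_1,g_2z_2)\end{smallmatrix}\right]\diag[\lambda(g_1,z_1),\overline{\lambda(g_2,z_2)}]$, obtained from $g_iU(z_i)=U(g_iz_i)\lambda(g_i,z_i)$ (and its conjugate, using the compatibility of $\mathfrak{J}$ and complex conjugation with elements of $G_{\infty}$ that you flag), after which (1) follows by comparing with the defining relation of the $G_N$-action and (2) by taking determinants of the resulting relation among automorphy factors; note, though, that your intermediate sentence ``$A(g_1z_1,g_2z_2)B(g_1z_1,g_2z_2)^{-1}=AB^{-1}$'' should say that the fractional-linear image of $AB^{-1}$ under $\rho(g_1,g_2)$ equals $A'B'^{-1}$ (that is where $\Lambda$ cancels), and both you and the paper handle the normalising factor $S$ somewhat informally (the bottom block of $\left[\begin{smallmatrix}A\\ B\end{smallmatrix}\right]B^{-1}S$ is $S$, not $1$), so that bookkeeping deserves exactly the care you announce. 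Where you genuinely diverge is part (3): you propose a direct evaluation of $\eta(\iota(z_1,z_2))$ via $R^{\ast}\omega R=J_{N/2}$ and block-diagonalisation into $\eta(z_1)$- and $\eta(z_2)$-pieces, whereas the paper does no further matrix work at all: it writes $z_i=g_iz_0$, applies (1) and (2) together with $\delta(\alpha Z)=|j(\alpha,Z)|^{-2}\delta(Z)$ on $\mathfrak{Z}_N$ and $\delta(z_i)=|j(g_i,z_0)|^{-2}\delta(z_0)$ on $\mathfrak{Z}$, and uses only the normalisation $\iota(z_0,z_0)=Z_0$ at the origin. The paper's route buys brevity by recycling (1)--(2) and the transitivity of the action; your direct route, if carried through, would make the constants (the exact role of $S$ and of $\det B$ at the origin) completely explicit, at the price of more computation.
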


\begin{proof}
By our definition of embedding and the action
\[
\left[\begin{array}{c}
\iota(g_1z_2,g_2z_2)\\
1
\end{array}\right]=R^{-1}\left[\begin{array}{cc}
U(g_1z_1) & 0\\
0 & \mathfrak{J}\overline{U(g_2z_2)}
\end{array}\right]B(g_1z_1,g_2z_2)^{-1}S
\]
\[
=R^{-1}\left[\begin{array}{cc}
g_1 & 0\\
0 & g_2
\end{array}\right]RR^{-1}\left[\begin{array}{cc}
U(z_1) & 0\\
0 & \mathfrak{J}\overline{U(z_2)}
\end{array}\right]\left[\begin{array}{cc}
\lambda(g_1,z_1) & 0\\
0 & \overline{\lambda(g_2,z_2)}
\end{array}\right]^{-1}B(g_1z_1,g_2z_2)^{-1}S
\]
\[
=\left[\begin{array}{c}
\rho(g_1,g_2)\iota(z_1,z_2)\\
1
\end{array}\right]\lambda(\rho(g_1,g_2),\iota(z_1,z_2))\times
\]
\[
S^{-1}B(z_1,z_2)\left[\begin{array}{cc}
\lambda(g_1,z_1) & 0\\
0 & \overline{\lambda(g_2,z_2)}
\end{array}\right]^{-1}B(g_1z_1,g_2z_2)^{-1}S.
\]
We must have 
\[
\lambda(\rho(g_1,g_2),\iota(z_1,z_2))S^{-1}B(z_1,z_2)\left[\begin{array}{cc}
\lambda(g_1,z_1) & 0\\
0 & \overline{\lambda(g_2,z_2)}
\end{array}\right]^{-1}B(g_1z_1,g_2z_2)^{-1}S=1,
\]
and the desired result follows. Taking the determinant we also obtain (2).

Suppose $z_1=g_1z_0,z_2=g_2z_0$ for $g_1\in G_{1\infty},g_2\in G_{2\infty}$ with $z_0$ the origin of $\mathfrak{Z}_1$ or $\mathfrak{Z}_2$. Then
\[
\delta(\iota(z_1,z_2))=\delta(\rho(g_1,g_2)\iota(z_0,z_0))=|j(\rho(g_1,g_2),\iota(z_0,z_0))|^{-2}\delta(\iota(z_0,z_0))
\]
\[
=|j(g_1,z_0)j(g_2,z_0)\det(B(z_0,z_0)^{-1}B(g_1z_0,g_2z_0))|^{-2}\delta(z_0)\delta(z_0)
\]
\[
=|\det(B(z_1,z_2))|^{-2}\delta(z_1)\delta(z_2).
\]
\end{proof}

\section{Quaternionic Modular Forms, Hecke Operators and L-functions}

In this section we introduce the notion of a modular form (of scalar weight) and define the Hecke operators in our setting. We then define the associated (standard) $L$-function. We keep writing $G$ for $G_n$ with $n = 2m +r$ as above.

\subsection{Modular forms and Fourier-Jacobi expansion}

Fix an integral ideal $\mathfrak{n}$ as last section.

\begin{defn}
A holomorphic function $f:\mathfrak{Z}\to\C$ is called a quaternionic modular form for a congruence subgroup $\Gamma$ and weight $k \in \mathbb{N}$ if for all $\gamma\in\Gamma$,
\[
f(\gamma z)=j(\gamma,z)^kf(z).
\]
\end{defn}

We note here that since we are assuming $m \geq 2$ we do not need any condition at the cusps. 

Denote the space of such functions by $M_k(\Gamma)$. Here we are using the realization $(G_{\infty},\mathfrak{Z})$ for our symmetric space. In fact, the definition is independent of the choice of realizations in following sense. If we choose another realization $\mathcal{H}$ (e.g. $\mathfrak{H},\mathfrak{B}$) with identification $\rho:\mathcal{H}\to\mathfrak{Z}$. Then with notation as in Equation (\ref{The matrix R}), to a function $f:\mathfrak{Z}\to\C$ we associate a function $g$ on $\mathcal{H}$ by setting $g(z)=\det(\mu(z))^{-k}f(\rho(z))$. Then $f:\mathfrak{Z}\to\C$ is a modular form if and only if $g:\mathcal{H}\to\C$ is a modular form. 

We write $S:=S(\Q):=\{X\in M_m(\mathbb{B}):X^{\ast}=X\}$ for the (additive) algebraic group of hermitian matrices. We use $S^+$ (resp. $S_+$) denote the subgroup of $S$ consisting of positive definite (resp. positive) elements. For a fractional ideal $\mathfrak{a}\subset\mathbb{B}$ we set $S(\mathfrak{a})=S\cap M_m(\mathfrak{a})$. Denote $e_{\infty}(z):=\mathrm{exp}(2\pi iz)$ for $z\in\C$ and $\lambda=\frac{1}{2}\mathrm{tr}$. For $f\in M_k(\Gamma)$ and $\gamma\in G$, there is a Fourier-Jacobi expansion of the form
\[
(f|_k\gamma)(z)=\sum_{\tau\in S_+}c(\tau,\gamma,f;v,w)e(\lambda(\mathfrak{i}(\tau)u)),z=(u,v,w)\in\mathfrak{Z},
\]
In particular, for $\gamma=1$ we simply write
\[
f(z)=\sum_{\tau\in S_+}c(\tau;v,w)e(\lambda(\mathfrak{i}(\tau)u)),
\]
We call $f$ a cusp form if $c(\tau,\gamma,f;v,w)=0$ for every $\gamma\in G$ and every $\tau$ such that $\det(h)=0$. The space of cusp form is denoted by $S_k(\Gamma)$.

Given a function $\mathbf{f}:G(\mathbb{A})\to\C$, we can, by abusing the notation, also view it as a function $\mathbf{f}:G(\mathbb{A}_{\mathbf{h}})\times\mathfrak{Z}\to\C$ by setting $\mathbf{f}(g_{\mathbf{h}},z):=j(g_z,z_0)^k\mathbf{f}(g_{\mathbf{h}}g_z)$ with $z=g_z\cdot z_0$.

\begin{defn}
A function $\mathbf{f}:G(\mathbb{A})\to\C$ is called a quaternionic modular form of weight $k$, level $\mathfrak{n}$ if\\
(1) Viewed as a function $\mathbf{f}:G(\mathbb{A}_{\mathbf{h}})\times\mathfrak{Z}\to\C$, $\mathbf{f}(g_{\mathbf{h}},z)$ is holomorphic in $z$,\\
(2) For $\alpha\in G(\Q),k_{\infty}\in K_{\infty},k\in K_1(\mathfrak{n})$
\[
\mathbf{f}(\alpha gk_{\infty}k)=j(k_{\infty},z_0)^{-k}\mathbf{f}(g), 
\]
or equivalently, \newline
(2') Viewed as a function $\mathbf{f}:G(\mathbb{A}_{\mathbf{h}})\times\mathfrak{Z}\to\C$, for $\alpha\in G(\Q),k\in K_1(\mathfrak{n})$ we have
\[
\mathbf{f}(\alpha g_{\mathbf{h}}k,\alpha z)=j(\alpha,z)^k\mathbf{f}(g_{\mathbf{h}},z).
\]
We will denote the space of such functions by $\mathcal{M}_k(K_1(\mathfrak{n}))$.
\end{defn}

We call $\mathbf{f}\in\mathcal{M}_k(K_1(\mathfrak{n}))$ a cusp form if 
\[
\int_{U(\Q)\backslash U(\mathbb{A})}\mathbf{f}(ug)\mathbf{d}u=0,
\]
for all unipotent radicals $U$ of all proper parabolic subgroups of $G$. The space of cusp forms will be denoted by $\mathcal{S}_k(K_1(\mathfrak{n}))$. 

It is well known that above two definitions are related by
\[
\mathcal{M}_k(K_1(\mathfrak{n}))\cong\bigoplus_{j}M_k(\Gamma^j_1(N)),\mathcal{S}_k(K_1(\mathfrak{n}))\cong\bigoplus_{j}S_k(\Gamma^j_1(N)),
\]

Write $\mathbf{f}\leftrightarrow(f_0,f_1,...,f_h)$ for the correspondence under above maps. Here, $f_j(z)=\mathbf{f}(t_j,z)=j(g_z,i)^k\mathbf{f}(t_jz)$ with $z=g_z\cdot i$. 

When $n=2m,r=0$ then the Fourier-Jacobi expansion becomes the usual Fourier expansion. For $x\in\Q_v,v\in\mathbf{h}$ define $e_v(x)=e_{\infty}(-y)$ with $y\in\bigcup_{n=1}^{\infty}p^{-n}\Z$ such that $x-y\in\Z_v$. Set $e_{\mathbb{A}}(x)=e_{\infty}(x_{\infty})\prod_{v\in\mathbf{h}}e_v(x_v)$. Let $\mathbf{f}\in\mathcal{M}_k(K_1(\mathfrak{n}))$. For $g\in G(\mathbb{A})$ we write it as $g=\gamma t_jkp_{\infty}k_{\infty}$ with $\gamma\in G(\Q)$, $k\in K_1(\mathfrak{n}),k_{\infty}\in K_{\infty}$. Take $t_j$ of the form $\left[\begin{array}{cc}
q_j & \sigma_j\hat{q}_j\\
0 & \hat{q}_j
\end{array}\right]$ with $q_j\in\GL_m(\mathbb{B}_{\mathbf{h}}),\sigma_j\in S(\mathbb{A}_{\mathbf{h}})$ and $p_{\infty}=\left[\begin{array}{cc}
q_{\infty} & \sigma_{\infty}\hat{q}_{\infty}\\
0 & \hat{q}_{\infty}
\end{array}\right]$ with $q_{\infty}\in\GL_m(\mathbb{B}_{\infty}),\sigma_{\infty}\in S(\R)$. Set $q=q_jq_{\infty},\sigma=\sigma_j\sigma_{\infty}$, then $\mathbf{f}$ has a Fourier expansion of the form
\[
\mathbf{f}(g)=j(k_{\infty},z_0)^{-k}\sum_{\tau\in S}\det(q_{\infty})^{-k}c(\tau,q;\mathbf{f})e_{\infty}(\lambda(q^{\ast}\tau q)z_0)e_{\mathbb{A}}(\lambda(\tau\sigma)).
\]
We call $c(\tau,q;\mathbf{f})$ the Fourier coefficients of $\mathbf{f}$. 

For two modular forms $f,h\in M_k(\Gamma)$, we define the Petersson inner product by
\[
\langle f,h\rangle=\int_{\Gamma\backslash\mathfrak{Z}}f(z)\overline{h(z)}\delta(z)^k\mathbf{d}z,
\]
whenever the integral converges. For example, this is well defined when one of $f,g$ is a cusp form. Adelically, for $\mathbf{f,h}\in\mathcal{M}_k(K_1(\mathfrak{n}))$ we define
\[
\langle\mathbf{f,h}\rangle=\int_{G(\Q)\backslash G(\mathbb{A})/K_1(\mathfrak{n})K_{\infty}}\mathbf{f}(g)\overline{\mathbf{h}(g)}\mathbf{d}g.
\]
Here $\mathbf{d}g$ an invariant differential of $G(\mathbb{A})$ given as follow: $\mathbf{d}g=\mathbf{d}g_{\mathbf{h}}\mathbf{d}g_{\infty}$ where $\mathbf{d}g_{h}$ is the canonical measure on $G(\mathbb{A}_{\mathbf{h}})$ normalized such that the volume of $K_1(\mathfrak{n})$ is $1$ and $\mathbf{d}g_{\infty}=\mathbf{d}(g_{\infty}z_0)$ with $\mathbf{d}z$ an invariant differential of $\mathfrak{Z}$.

Viewing $\mathbf{f,h}$ as functions $\mathbf{f,g}:G(\mathbb{A}_{\mathbf{h}})\times\mathfrak{Z}\to\C$ we have,
\[
\langle\mathbf{f,h}\rangle=\int_{G(\Q)\backslash (G(\mathbb{A}_{\mathbf{h}})/K_1(\mathfrak{n})\times\mathfrak{Z})}\mathbf{f}(g,z)\overline{\mathbf{h}(g,z)}\delta(z)^k\mathbf{d}g_{\mathbf{h}}\mathbf{d}z.
\]
Again these integrals are well-defined if one of $\mathbf{f,h}$ is a cusp form. If $\mathbf{f}\leftrightarrow(f_j)_j,\mathbf{h}\leftrightarrow(h_j)_j$ then
\[
\langle\mathbf{f,h}\rangle=\sum_{j}\langle f_j,h_j\rangle.
\]

\subsection{Hecke operators and L-functions}

In the rest of the paper, we make the assumption that all finite places $v$ with $v\nmid\mathfrak{n}$ are split in $\mathbb{B}$. We define the groups
\[
E=\prod_{v\in\mathbf{h}}\GL_m(\mathcal{O}_v),\,\,\,M=\{x\in\GL_m(\mathbb{B})_{\mathbf{h}}:x_v\in M_m(\mathcal{O}_v)\},
\]
Let $\mathfrak{X}=\prod_v\mathfrak{X}_v$ be a subgroup of $G(\mathbb{A}_{\mathbf{h}})$ with $\mathfrak{X}_v=K_v$ if $v|\mathfrak{n}$ and $\mathfrak{X}_v=G_v$ if otherwise. Define the Hecke algebra $\mathbb{T}=\mathbb{T}(K_1(\mathfrak{n}),\mathfrak{X})$ be the $\Q$-algebra generated by double coset $[K_1(\mathfrak{n})\xi K_1(\mathfrak{n})]$ with $\xi\in\mathfrak{X}$. Given $\mathbf{f}\in\mathcal{M}_k(K_1(\mathfrak{n}))$ the Hecke operator $[K_1(\mathfrak{n})\xi K_1(\mathfrak{n})]$ acts on $\mathbf{f}$ by
\[
(\mathbf{f}|[K_1(\mathfrak{n})\xi K_1(\mathfrak{n})])(g)=\sum_{y\in Y}\mathbf{f}(gy^{-1}),
\]
where $Y$ is a finite subset of $G_{\mathbf{h}}$ such that 
\[
K_1(\mathfrak{n})\xi K_1(\mathfrak{n})=\bigcup_{y\in Y}K_1(\mathfrak{n})y.
\]
We say that $\mathbf{f}\in\mathcal{S}_k(K_1(\mathfrak{n}))$ is an eigenform if there exists some numbers $\lambda_{\mathbf{f}}(\xi)\subset\C$ called eigenvalues such that
\[
\mathbf{f}|[K_1(\mathfrak{n})\xi K_1(\mathfrak{n})]=\lambda_{\mathbf{f}}(\xi)\mathbf{f}\text{ for all }\xi\in\mathfrak{X}.
\]
We use the notation $l(\xi):=\det(r)$ if $\xi\in G(\mathcal{O}_{\mathbf{h}})\mathrm{diag}[\hat{r},1,r]G(\mathcal{O}_{\mathbf{h}})$ with $r\in M$. For a Hecke character $\chi$ we define the series
\[
D(s,\mathbf{f},\chi)=\sum_{\xi\in K_1(\mathfrak{n})\backslash\mathfrak{X}/K_1(\mathfrak{n})}\lambda_{\mathbf{f}}(\xi)\chi^{\ast}(l(\xi))l(\xi)^{-s},\text{ }\mathrm{Re}(s)\gg0.
\]
Here $\chi^{\ast}$ is the associated Dirichlet character of $\chi$. We further define the L-function by
\begin{equation}\label{Relation of Dirichlet series}
L(s,\mathbf{f},\chi)=\Lambda_{\mathfrak{n}}(s,\chi)D(s,\mathbf{f},\chi),\Lambda_{\mathfrak{n}}(s,\chi)=\prod_{i=0}^{n-1}L_{\mathfrak{n}}(2s-2i,\chi^2).
\end{equation}
Here the subscript $\mathfrak{n}$ means the Euler factors at $v|\mathfrak{n}$ are removed.

Define $\mathbb{T}_v=\mathbb{T}(K_v,\mathfrak{X}_v)$ be the local counterpart of $\mathbb{T}$ so $\mathbb{T}=\otimes'_v\mathbb{T}_v$. Obviously, $\mathbb{T}_v$ is trivial if $v|\mathfrak{n}$. For $v\nmid\mathfrak{n}$, by our assumptions it splits so we can identify the local group $G_v$ with local orthogonal group. Such local Hecke algebra is discussed in \cite{Sh04} where following Satake map was constructed
\[
\omega:\mathbb{T}_v\to\Q[t_1,...,t_{n},t_1^{-1},...,t_n^{-1}].
\]
Given an eigenform $\mathbf{f}$, the map $\xi\mapsto\lambda_{\mathbf{f}}(\xi)$ induces homomorphism $\mathbb{T}_v\to\C$ which are parametrised by Satake parameters
\[
\alpha_{1,v}^{\pm 1},...,\alpha_{n,v}^{\pm 1}.
\]
The L-function then has an Euler product expression
\[
L(s,\mathbf{f},\chi)=\prod_{p\nmid\mathfrak{n}}L_p(s,\mathbf{f},\chi),
\]
with $L_p(s,\mathbf{f},\chi)$ given by
\[
(1-p^{2n-2}\chi^2(p))^{-1}\prod_{i=1}^n\left((1-\alpha_{i,p}\chi(p)p^{n-1-s})(1-\alpha_{i,p}^{-1}\chi(p)p^{n-1-s})\right)^{-1},
\]
where $p$ is a prime corresponding to some place $v$ in the notation above.

\section{Eisenstein Series and Integral Representation of L-functions}

\subsection{Siegel Eisenstein Series and its Fourier Expansion}

We fix an integer $0\leq t\leq m$, and for $x\in G$ we write  
\[
x=\left[\begin{array}{ccccc}
a_1 & a_2 & b_1 & c_1 & c_2\\
a_3 & a_4 & b_2 & c_3 & c_4\\
g_1 & g_2 & e & f_1 & f_2\\
h_1 & h_2 & l_1 & d_1 & d_2\\
h_3 & h_4 & l_2 & d_3 & d_4
\end{array}\right]
\]
with block size $(t,m-t,r,t,m-t)\times(t,m-t,r,t,m-t)$. The $t$-Klingen parabolic subgroup of $G_{n}$ is defined as
\[
P_{n}^t=\{x\in G:a_2=g_2=h_2=h_3=h_4=l_2=d_3=0\}.
\]
Clearly we have $P_n^m=G_{n}$. We define a projection map $\pi_t:P_n^t\to G_{2t+r}$ by
\[
\left[\begin{array}{ccccc}
a_1 & 0 & b_1 & c_1 & c_2\\
a_3 & a_4 & b_2 & c_3 & c_4\\
g_1 & 0 & e & f_1 & f_2\\
h_1 & 0 & l_1 & d_1 & d_2\\
0 & 0 & 0 & 0 & d_4
\end{array}\right]\mapsto\left[\begin{array}{ccc}
a_1 & b_1 & c_1\\
g_1 & e & f_1\\
h_1 & l_1 & d_1
\end{array}\right].
\]

In particular, if $r=0,n=2m$. The parabolic subgroup 
\[
P_n:=P^0_n=\left\{\left[\begin{array}{cc}
a & b\\
c & d
\end{array}\right]\in G_{n}:c=0\right\},
\]
is called Siegel parabolic subgroup. We now fix weight $l \in \mathbb{N}$ and let $\chi$ be a Hecke character whose conductor divides $\mathfrak{n}$. The Siegel-type Eisenstein series is defined as
\[
\mathbf{E}_l(x,s)=\mathbf{E}^m_l(x,s;\chi)=\sum_{\gamma\in P_m\backslash G_{2m}}\varphi(\gamma x,s),
\]
with 
\[
\varphi(x,s)=\chi_{\mathbf{h}}(\det(d_p))^{-1}j(x,z_0)^{-l}|\det(d_p)|^{-s}_{\mathbf{h}}|j(x,z_0)|^{l-s},
\]
if $x=pkk_{\infty}\in P_n(\mathbb{A})K_1(\mathfrak{n})K_{\infty}$ and $\varphi(x,s)=0$ if otherwise. Let $J_{\mathbf{h}}\in G_{n}(\mathbb{A})$ be an element defined by $J_v=J_m$ for $v\in\mathbf{h}$ and $J_{\infty}=1$. We set $\mathbf{E}_l^{\ast}(x,s)=\mathbf{E}_l(xJ_{\mathbf{h}}^{-1},s)$. Then we have the following Proposition (see for example \cite{B}).

\begin{prop}
$\mathbf{E}_l^{\ast}(x,s)$ has a Fourier expansion of form
\[
\mathbf{E}_l^{\ast}\left(\left[\begin{array}{cc}
q & \sigma\hat{q}\\
0 & \hat{q}
\end{array}\right],s\right)=\sum_{h\in S}c(h,q,s)e_{\mathbb{A}}(\lambda(h\sigma)),
\]
where $q\in\GL_m(\mathbb{B}_{\mathbb{A}})$ and $\sigma\in S(\mathbb{A})$. The Fourier coefficient $c(h,q,s)\neq 0$ only if $(q^{\ast}hq)_v\in T(\Q_v)\cap M_m(\mathfrak{n}_v^{-1})$ for all $v\in\mathbf{h}$. In this case we have
\[
c(h,q,s)=A(n)\chi(\det(q_{\mathbf{h}}))^{-1}\det(q_{\infty})^s|\det(q)|_{\mathbf{h}}^{2m-1-s}\alpha_{\mathfrak{n}}(q_{\mathbf{h}}^{\ast}hq_{\mathbf{h}},s,\chi)\xi(q_{\infty}q_{\infty}^{\ast},h,s+l,s-l).
\]
Here\\
(1) $A(n)\in\overline{\Q}^{\times}$ is a constant depending on $n$.\\
(2) If $h$ has rank $r$ then
\[
\alpha_{\mathfrak{n}}(q^{\ast}hq,s,\chi)=\frac{\prod_{i=1}^{m-r}L_{\mathfrak{n}}(2s-4m+2r+2i+1,\chi^2)}{\prod_{i=0}^{m-1}L_{\mathfrak{n}}(2s-2i,\chi^2)}\prod_{p}P_{h,q,p}(\chi^{\ast}(p)p^{-s}),
\]
where $P_{h,q,p}(X)\in\mathcal{O}[X]$ and $P_{h,q,p}=1$ if $\det(h)\in 2^{m+1}\Z_p^{\times}$. Here $p$ is the prime corresponding to $v$.\\
(3) Let $p$ be the number of positive eigenvalues of $h$, $q$ the number of negative eigenvalues of $h$, $t=m-p-q$, then for $y\in S_{\infty}^+,h\in S_{\infty}$
\[
\xi(y,h,s+l,s-l)=\frac{\Gamma_t(2s-2m+1)}{\Gamma_{m-q}(s+l)\Gamma_{m-p}(s-l)}\omega(y,h,s+l,s-l),
\]
where
\[
\Gamma_m(s)=\pi^{m(m-1)}\prod_{i=0}^{m-1}\Gamma(s-2i),m\in\Z,
\]
and $\omega$ is holomorphic with respect to $s+l,s-l$. In particular, when $p=m$,
\[
\xi(y,h,2l,0)=2^{2-2m}(2\pi i)^{2ml}\Gamma_m^{-1}(2l)\det(h)^{l-\frac{2m-1}{2}}e(i\lambda(hy)).
\]
\end{prop}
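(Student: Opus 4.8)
The statement is the explicit Fourier expansion of a Siegel-type Eisenstein series, and since $\mathbf{E}_l$ is attached to the Siegel parabolic $P_m$ of the \emph{split} group $G_{2m}$ (so the relevant symmetric domain is of tube type), the plan is to follow the method of Shimura \cite{Sh97,Sh00}, adapted to the quaternionic orthogonal group as in \cite{B}. The first step is to \emph{unfold}: writing $\mathbf{E}_l^{\ast}(x,s)=\sum_{\gamma\in P_m\backslash G_{2m}}\varphi(\gamma x J_{\mathbf{h}}^{-1},s)$ and evaluating at $x=\left[\begin{array}{cc} q & \sigma\hat q\\ 0 & \hat q\end{array}\right]$, one decomposes $P_m\backslash G_{2m}$ by the Bruhat decomposition with respect to $P_m$, whose representatives are organised by the rank of their lower-left block. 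The left $P_m(\mathbb{A})$-quasi-invariance of $\varphi$ means the dependence on $\sigma$ enters only through the unipotent radical of $P_m$, so summing over the lattice of $\sigma$'s within each Bruhat orbit produces a Fourier series $\sum_{h\in S}c(h,q,s)\,e_{\mathbb{A}}(\lambda(h\sigma))$; the orbit of a given rank contributes only to those $c(h,q,s)$ with $\operatorname{rank}(h)$ of matching size, and the twist by $J_{\mathbf{h}}$ is exactly what makes the expansion come out in the coordinates of the statement.

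The coefficient $c(h,q,s)$ then factors, up to the global constant $A(n)$ and a normalising power of $q$, as $\prod_{v\in\mathbf{h}}$ of a local Siegel series $\alpha_v$ times an archimedean confluent hypergeometric integral $\xi$. At a finite place $v\nmid\mathfrak{n}$ — split, so $G_v\cong\mathrm{SO}(n,n)(\Q_v)$ via $\mathfrak{i}_v$ — the local integral is the classical Siegel series of the quadratic form attached to $h$, whose evaluation (Shimura, Kitaoka) gives exactly the advertised ratio of abelian $L$-factors, the remaining finite correction being the polynomial $P_{h,q,p}(X)\in\O[X]$, which degenerates to $1$ in the "good" case $\det h\in 2^{m+1}\Z_p^{\times}$. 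Convergence of the local integral at the places dividing $\mathfrak{n}$ forces the integrality condition $(q^{\ast}hq)_v\in M_m(\mathfrak{n}_v^{-1})$, and those places contribute only constants, absorbed into $A(n)$ once the Euler factors at $\mathfrak{n}$ are stripped (the $\Lambda_{\mathfrak{n}}$-normalisation). At $v=\infty$ the factor is Shimura's confluent hypergeometric function $\xi(y,h,s+l,s-l)$ on the tube domain; one invokes its meromorphic continuation, holomorphy in $s\pm l$, the explicit Gamma-factor denominator, and the special value at $s=l$ with $h$ positive definite, from \cite{Sh84,Sh94}, which is precisely the closed form stated.

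The main obstacle is the explicit local analysis at the finite places: computing the Siegel series for \emph{all} ranks of $h$, controlling the polynomials $P_{h,q,p}$ (and establishing $P_{h,q,p}=1$ under the stated unit condition), and throughout pinning down every normalising constant — the powers of $2$, of $\pi$ and of $i$, and the precise $A(n)$ — so that the finite and archimedean pieces assemble into the stated identity; the archimedean evaluation, although in principle available in the literature, likewise requires a careful match with Shimura's conventions in the quaternionic/orthogonal setting.
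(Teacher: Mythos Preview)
Your proposal is correct and follows exactly the approach taken in \cite{B}, which is all the paper invokes here: the paper does not give its own proof of this proposition but simply cites \cite{B}. Your outline---unfolding via the Bruhat decomposition of $P_m\backslash G_{2m}$, factoring the coefficient into local Siegel series at finite places (evaluated via the split isomorphism to $\mathrm{SO}(n,n)$ and Shimura--Kitaoka) and Shimura's confluent hypergeometric function at infinity---is precisely the method of that reference, itself an adaptation of \cite{Sh97,Sh00} to the quaternionic setting.
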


For $g\in G(\mathbb{A})$, write it as $g=\gamma t_jkp_{\infty}k_{\infty}$ with $\gamma\in G(\Q),k\in \eta K_0(\mathfrak{n})\eta^{-1},k_{\infty}\in K_{\infty}$ and $t_j=\left[\begin{array}{cc}
q_j & \sigma_j\hat{q}_j\\
0 & \hat{q}_j
\end{array}\right],p_{\infty}=\left[\begin{array}{cc}
q_{\infty} & \sigma_{\infty}\hat{q}_{\infty}\\
0 & \hat{q}_{\infty}
\end{array}\right]$. Set $q=q_jq_{\infty},\sigma=\sigma_j\sigma_{\infty}$ then by modularity property, $\mathbf{E}_l^{\ast}(g,s)$ can be written as
\[
\mathbf{E}_l^{\ast}(g,s)=j(k_{\infty},z_0)^{-l}\sum_{h}c(h,q,s)e_{\mathbb{A}}(\lambda(h\sigma)),
\]
with $c(h,q,s)$ in above propositions. We are interested in the special values $\mathbf{E}_l^{\ast}(g,s)$ for $s=l$. From the Fourier expansion, we have the following proposition by counting poles and degree of $\pi$ in those Gamma functions.

\begin{prop}
Assume $l>n-1$. Then the Fourier coefficients $c(h,q,l)\neq 0$ unless $h>0$ and in this case
\[
\det(q_{\infty})^{-l}c(h,q,l)=C\cdot e_{\infty}(i\lambda(q^{\ast}hq)).
\]
Here, up to a constant in $\overline{\Q}$, $C=\chi(\det(q_{\mathbf{h}}))^{-1}|\det(q)|_{\mathbf{h}}^{2m-1-l}$. In particular, $\mathbf{E}_l^{\ast}(g,l)$ is holomorphic in the sense that when viewed as a function on $G(\mathbb{A}_{\mathbf{h}})\times\mathfrak{Z}$, $\mathbf{E}_l^{\ast}(g_{\mathbf{h}},z,l)$ is holomorphic in $z$. 
\end{prop}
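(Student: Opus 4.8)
The plan is to read the whole statement off the Fourier expansion of the previous proposition by setting $s=l$ and inspecting each factor of
\[
c(h,q,s)=A(n)\,\chi(\det(q_{\mathbf{h}}))^{-1}\det(q_{\infty})^{s}\,|\det(q)|_{\mathbf{h}}^{2m-1-s}\,\alpha_{\mathfrak{n}}(q_{\mathbf{h}}^{\ast}hq_{\mathbf{h}},s,\chi)\,\xi(q_{\infty}q_{\infty}^{\ast},h,s+l,s-l).
\]
First I would settle the vanishing. Let $a$ and $b$ denote the numbers of positive and negative eigenvalues of $h$ and $t=m-a-b$ the number of zero ones; then at $s=l$ the archimedean factor is $\xi(y,h,2l,0)=\Gamma_t(2l-2m+1)\,\Gamma_{m-b}(2l)^{-1}\,\Gamma_{m-a}(0)^{-1}\,\omega(y,h,2l,0)$. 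Since $\Gamma_{m-a}(0)=\pi^{(m-a)(m-a-1)}\prod_{i=0}^{m-a-1}\Gamma(-2i)$ contains the factor $\Gamma(0)$ as soon as $m-a\ge 1$, the (entire) reciprocal $\Gamma_{m-a}(0)^{-1}$ is $0$ unless $a=m$, i.e. unless $h>0$; as $\omega$ is holomorphic, no compensating pole occurs. Hence $c(h,q,l)=0$ whenever $h$ is not positive definite (so the first assertion should read $c(h,q,l)=0$ unless $h>0$).

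Now assume $h>0$, so $a=m$ and $b=t=0$; then $\Gamma_{0}=1$ and the explicit evaluation quoted in the previous proposition applies, giving
\[
\xi(q_{\infty}q_{\infty}^{\ast},h,2l,0)=2^{2-2m}(2\pi i)^{2ml}\,\Gamma_{m}(2l)^{-1}\,\det(h)^{\,l-\frac{2m-1}{2}}\,e_{\infty}\!\big(i\lambda(hq_{\infty}q_{\infty}^{\ast})\big).
\]
Because $\Gamma_{m}(2l)=\pi^{m(m-1)}\prod_{i=0}^{m-1}(2l-2i-1)!$ and the hypothesis $l>n-1\ge 2m-1$ makes every $(2l-2i-1)!$ a positive integer, this factor equals $\pi^{\,2ml-m(m-1)}$ times an element of $\overline{\Q}^{\times}$ times $\det(h)^{\,l-\frac{2m-1}{2}}e_{\infty}(i\lambda(hq_{\infty}q_{\infty}^{\ast}))$; here $\det(h)$, the reduced norm of a positive definite rational hermitian matrix, is a positive rational, so $\det(h)^{\,l-\frac{2m-1}{2}}\in\overline{\Q}$. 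Moreover $\lambda(hq_{\infty}q_{\infty}^{\ast})=\lambda(q_{\infty}^{\ast}hq_{\infty})=\lambda\big((q^{\ast}hq)_{\infty}\big)$, so the exponential is precisely $e_{\infty}(i\lambda(q^{\ast}hq))$.

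For the nonarchimedean factor, $h>0$ has full rank $m$, so the numerator product $\prod_{i=1}^{m-\operatorname{rank}(h)}$ in the formula for $\alpha_{\mathfrak{n}}$ is empty and
\[
\alpha_{\mathfrak{n}}(q_{\mathbf{h}}^{\ast}hq_{\mathbf{h}},l,\chi)=\Big(\prod_{i=0}^{m-1}L_{\mathfrak{n}}(2l-2i,\chi^{2})\Big)^{-1}\prod_{p}P_{h,q,p}\big(\chi^{\ast}(p)p^{-l}\big).
\]
The product over $p$ is finite (almost all factors equal $1$) and lies in $\overline{\Q}$. For the denominator I would invoke the classical theorem on critical values of Dirichlet $L$-functions: the Dirichlet character attached to $\chi^{2}$ is even, since $\chi^{2}(-1)=1$, and each argument $2l-2i$ with $0\le i\le m-1$ is a positive even integer because $l>n-1$; hence $L(2l-2i,\chi^{2})\in\pi^{\,2l-2i}\overline{\Q}^{\times}$, and deleting the Euler factors at the primes dividing $\mathfrak{n}$ (nonzero algebraic numbers at these arguments) keeps us in $\pi^{\,2l-2i}\overline{\Q}^{\times}$. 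Therefore $\prod_{i=0}^{m-1}L_{\mathfrak{n}}(2l-2i,\chi^{2})\in\pi^{\sum_{i=0}^{m-1}(2l-2i)}\overline{\Q}^{\times}=\pi^{\,2ml-m(m-1)}\overline{\Q}^{\times}$.

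Multiplying the three factors and using $A(n)\in\overline{\Q}^{\times}$, the power $\pi^{\,2ml-m(m-1)}$ produced by $\xi$ cancels exactly the power $\pi^{-(2ml-m(m-1))}$ produced by $\alpha_{\mathfrak{n}}$, and one is left with
\[
\det(q_{\infty})^{-l}c(h,q,l)=\big(\text{const.}\in\overline{\Q}\big)\cdot\chi(\det(q_{\mathbf{h}}))^{-1}|\det(q)|_{\mathbf{h}}^{2m-1-l}\cdot e_{\infty}\big(i\lambda(q^{\ast}hq)\big),
\]
which is the asserted identity with $C$ as in the statement. For the holomorphy claim I would note that the only non-holomorphic ingredient in $\mathbf{E}_{l}^{\ast}(g,s)$ is the confluent-hypergeometric-type function $\omega(y,h,s+l,s-l)$ of the imaginary coordinate; after specialising to $s=l$ only the terms with $h>0$ survive and there $\xi$ has collapsed to the pure exponential $e_{\infty}(i\lambda(hy))$, so, combined with the finite part $e_{\mathbb{A}}(\lambda(h\sigma))$, each surviving term depends holomorphically on $z$, and the Fourier(--Jacobi) expansion of $\mathbf{E}_{l}^{\ast}(g_{\mathbf{h}},z,l)$ has exactly the shape of that of a holomorphic modular form. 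The one step that is not pure bookkeeping of the normalisations of the previous proposition is precisely this exact matching of the two powers of $\pi$ — the one coming out of the archimedean factor $(2\pi i)^{2ml}/\Gamma_{m}(2l)$ against the one coming out of the product of critical Dirichlet $L$-values inside $\alpha_{\mathfrak{n}}$ — together with the (automatic) parity condition $\chi^{2}(-1)=1=(-1)^{2l-2i}$ that renders those $L$-values critical; that is where I would expect the only real subtlety to lie.
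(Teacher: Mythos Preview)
Your proposal is correct and follows exactly the approach the paper indicates: the paper's entire justification is the single sentence ``From the Fourier expansion, we have the following proposition by counting poles and degree of $\pi$ in those Gamma functions,'' and you have carried out precisely that bookkeeping in full detail, including the key point that the power $\pi^{2ml-m(m-1)}$ from $(2\pi i)^{2ml}/\Gamma_m(2l)$ cancels the power $\pi^{-(2ml-m(m-1))}$ from $\prod_{i=0}^{m-1}L_{\mathfrak n}(2l-2i,\chi^2)^{-1}$.
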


We note in particular that the proposition implies that also $\mathbf{E}_l(g,l)$ is holomorphic in the above sense since $\mathbf{E}_l^{\ast}(x,s)=\mathbf{E}_l(xJ_{\mathbf{h}}^{-1},s)$.

\subsection{Coset Decompositions}

Let 
\[
\rho:G_n\times G_n\to G_{N},g_1\times g_2\mapsto R^{-1}\mathrm{diag}[g_1,g_2]R,
\]
be the doubling embedding defined before. To ease the notation we may omit the subscript $n$. Denote $P_N$ for the Siegel parabolic subgroup of $G_N$ and $P^t=P_{n}^t$ the $t$-parabolic subgroup of $G$.

\begin{prop} \label{Coset Decomposition}
For $0\leq t\leq m$, let $\tau_t$ be the element of $G_N$ given by
\[
\tau_i=\left[\begin{array}{cccccc}
1_{m} & 0 & 0 & 0 & 0 & 0\\
0 & 1_r & 0 & 0 & 0_r & 0\\
0 & 0 & 1_{m} & 0 & 0 & 0\\
0 & 0 & e_t & 1_{m} & 0 & 0\\
0 & 0_r & 0 & 0 & 1_r & 0\\
e_t^{\ast} & 0 & 0 & 0 & 0 & 1_{m}
\end{array}\right],e_t=\left[\begin{array}{cc}
1_t & 0\\
0 & 0
\end{array}\right]\in\mathbb{B}^{m}_{m}.
\]
Then $\tau_i$ form a complete set of representatives of $P_N\backslash G_N/G\times G$.
\end{prop}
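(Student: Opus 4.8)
The statement is a ``double coset'' computation of the same flavour as the classical ones in Shimura's books for the symplectic and unitary groups (see e.g. \cite[\S 24]{Sh00}), so the plan is to follow that template, being careful about the quaternionic entries. First I would reduce the problem to a more tractable form. The group $G\times G$ sits inside $G^{\omega}$ (with $\omega=\mathrm{diag}[\phi,-\phi]$) as block-diagonal matrices, and $\rho$ conjugates $G^{\omega}$ to $G_N$ by $R$. Since the map $g\mapsto R^{-1}gR$ carries $P_N$ to $R P_N R^{-1}$, it is cleaner to work with $G^{\omega}$ and the parabolic $Q := R P_N R^{-1}$, and to show that the elements $R\tau_tR^{-1}$ (or rather their preimages) represent $Q\backslash G^{\omega}/(G\times G)$; the stated $\tau_t$ are then obtained by conjugating back. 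Concretely one identifies $Q$ as the stabiliser in $G^{\omega}$ of a fixed maximal isotropic $\mathbb{B}$-subspace $W\subset V_N = V\oplus V$ for the form $\omega$, namely the ``diagonal-type'' Lagrangian coming from the Siegel parabolic.

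The heart of the argument is then a linear-algebra classification. An element $g = g_1\times g_2 \in G\times G$ acts on $V\oplus V$; the double coset $Q g (G\times G)$ is determined by the relative position of the two isotropic subspaces $W$ and $gW$ — equivalently, by the $(G\times G)$-orbit of the Lagrangian $W$ under the action twisted by $g$, which in turn is governed by the single numerical invariant $\dim_{\mathbb{B}}\bigl( (W\cap (V\oplus 0)) \bigr)$ together with the rank of the induced pairing. I would make this precise by a Bruhat-type / Witt-type argument: writing $V$ with its skew-hermitian form $\phi$ of Witt index $m$, decompose $V = X \oplus (\zeta\cdot 1_r\text{-anisotropic part}) \oplus Y$ with $X,Y$ the standard maximal isotropic pieces of dimension $m$; then the relevant subspaces that can arise as $W \cap (V\oplus 0)$, up to the action of $G_n$ on the first factor and $G_n$ on the second factor, are exactly the ``coordinate'' isotropic subspaces $X_t := \langle e_1,\dots,e_t\rangle$ of dimension $t$, for $0\le t\le m$ — the anisotropic $r$-dimensional kernel of $\zeta\cdot 1_r$ forces the index $t$ to stay $\le m$ and cannot contribute isotropic directions (this is where definiteness of $\mathbb{B}$, i.e.\ $\alpha<0$, enters). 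This produces the $m+1$ representatives. One checks directly that $\tau_t$, built from $e_t = \mathrm{diag}[1_t,0]$, realises precisely the coset with invariant $t$: compute $\tau_t^{\ast}J_{N/2}\tau_t = J_{N/2}$ to see $\tau_t\in G_N$, then compute $P_N \tau_t (G\times G)$ and read off that the associated isotropic intersection has dimension $t$.

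Two things remain: \emph{exhaustiveness} (every double coset is some $\tau_t$) and \emph{disjointness} ($\tau_s,\tau_t$ in the same coset $\Rightarrow s=t$). For exhaustiveness I would argue that given any $g\in G_N$, after multiplying on the left by $P_N$ and on the right by $G\times G$ one can bring the ``lower-left block'' of $g$ (in the $G^{\omega}$ picture, the component measuring $gW \cap W$) into the normal form $e_t$; this uses the Iwasawa/Bruhat decomposition of $G_N$ over each completion together with Witt's theorem applied to the skew-hermitian $\mathbb{B}$-spaces, plus the observation that $G\times G$ acts transitively on frames of isotropic subspaces of each fixed dimension in each factor. Disjointness follows because the invariant $t = \dim_{\mathbb{B}}(gW\cap(V\oplus 0))$ is manifestly constant on double cosets, and $\tau_s W \cap (V\oplus 0)$ has dimension $s$ by the explicit shape of $e_s$. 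The main obstacle I anticipate is bookkeeping: tracking the $\mathbb{B}$-linear-algebra through the conjugation by $R$ and the block sizes $(m,r,m,m,r,m)$, and in particular verifying carefully that the anisotropic slot $\zeta\cdot 1_r$ genuinely contributes nothing new to the list of orbits — that, rather than any deep idea, is where the work lies, and it is exactly the point where the quaternionic (as opposed to symplectic/unitary) and the possibly-odd-$r$ features of the present setting differ from the treatment in \cite{Sh00}.
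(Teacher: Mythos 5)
Your plan is essentially the paper's own argument: the paper also identifies $P_N\backslash G_N$ with ($\mathrm{GL}$-classes of) isotropic frames $w$ satisfying $wJ_{N/2}w^{\ast}=0$, i.e.\ with maximal isotropic subspaces of $(V\oplus V,\phi\oplus(-\phi))$ after the coordinate change by $R$, and then classifies the $(G\times G)$-orbits by a Witt-type normalisation (there phrased as bringing the common Gram matrix $x\phi x^{\ast}=y\phi y^{\ast}$ to a standard shape and extending isometries), the orbit invariant being your $t$; your disjointness argument via $\dim_{\mathbb{B}}\bigl(L\cap(V\oplus 0)\bigr)$ is left implicit in the paper but is the same circle of ideas, and the explicit matrix manipulations in the paper are exactly the bookkeeping you anticipate.

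One concrete caveat, at precisely the point you flag as the crux: the justification that definiteness of $\mathbb{B}$ (``$\alpha<0$'') forces the slot $\zeta\cdot 1_r$ to contribute no isotropic directions, hence $t\le m$, is not correct as stated once $r\ge 2$. Over $\mathbb{H}$ every skew-hermitian form of rank $\ge 2$ is isotropic (e.g.\ $(1,\mathbf{j})$ is isotropic for $\mathrm{diag}[\mathbf{i},\mathbf{i}]$), and this can persist globally: for $\mathbb{B}=\left(\frac{-1,-1}{\Q}\right)$ with $\zeta^2=\alpha=-1$, $\beta=-1$, the vector $(1,\xi)$ satisfies $\overline{1}\zeta 1+\overline{\xi}\zeta\xi=\zeta(1+\beta)=0$, so $\zeta\cdot 1_2$ is isotropic over $\Q$. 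The bound $t\le m$ is equivalent to the Witt index of $\phi$ over $\Q$ being exactly $m$, i.e.\ to $\zeta\cdot 1_r$ being anisotropic over $\Q$; for $r\le 1$ this is immediate because $\mathbb{B}$ is a division algebra, but for $r\ge 2$ it is a condition that can only be enforced at the ramified finite places (and cannot hold at all once $r$ exceeds the maximal local anisotropic dimension), not by definiteness at infinity. To be fair, the paper's own proof asserts the normal-form list with $0\le t\le m$ without addressing this either, so you are not missing a step the paper supplies; but in a write-up you should either restrict to $r\le 1$ or make the anisotropy of $\zeta\cdot 1_r$ an explicit hypothesis. Two minor points: the decomposition is of $\Q$-rational cosets, so the appeal to Iwasawa/Bruhat ``over each completion'' is out of place (Witt's theorem over $\Q$ is what you use), and in these coordinates $\dim\bigl(\tau_sW\cap(V\oplus 0)\bigr)$ need not literally equal $s$ — it is a strictly monotone function of $s$, which is all your disjointness argument requires.
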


\begin{proof}
This can be proved similarly to the proof of lemmata 4.1,4.2 in \cite{Sh95}. 
Let $W=\{w\in\mathbb{B}^N_{2N}:wJ_nw^{\ast}=0,\mathrm{rank}(w)=N\}$ then $P_N\backslash G_N\cong \GL_N\backslash W$. Therefore, it suffices to find representatives of $\GL_N\backslash W/G\times G$. Let $w=\left[\begin{array}{cccccc}
a_1 & b_1 & c_1 & a_2 & b_2 & c_2
\end{array}\right]\in W$ of column size $(m,r,m,m,r,m)$. The  condition $wJ_nw^{\ast}=0$ is equivalent to $wR^{\ast}\omega Rw^{\ast}=0$. Explicitly, $wR^{\ast}=\left[\begin{array}{cc}x& y\end{array}\right]$ with 
\[
x=\left[\begin{array}{ccc}
a_1 & \frac{b_1}{2}+b_2\zeta^{-1} & a_2
\end{array}\right],y=\left[\begin{array}{ccc}
-c_1 & -\frac{b_1}{2}+b_2\zeta^{-1} & c_2
\end{array}\right],x\phi_1 x^{\ast}=y\phi_2 y^{\ast}.
\]
Multiplying by some element in $\GL_N$ we can assume 
\[
x\phi_1 x^{\ast}=y\phi_2 y^{\ast}=\left[\begin{array}{ccc}
0 & 0 & e_t\\
0 & \zeta I_r & 0\\
e_t^{\ast} &0 & 0
\end{array}\right]\text{ or }\left[\begin{array}{ccc}
0 & 0 & e_t\\
0 & 0_r & 0\\
e_t^{\ast} &0 & 0
\end{array}\right],0\leq t\leq m.
\]
Let $V=\mathbb{B}^{n}$ with standard basis $\{\epsilon_i\}$ and denote $x_i$ be the $i$-th row of $x$.  Let $\tilde{U}$ be the subspace of $V$ spanned by basis $\epsilon_i$ with $i\leq t$ or $m+r\leq i\leq m+r+t$ and $\tilde{U}^{\perp}$ the subspace spanned by other basis so $V=\tilde{U}\oplus \tilde{U}^{\perp}$. Let $\theta$ be the restriction of $\phi_1$ on $\tilde{U}$ and $\eta$ the restriction on $\tilde{U}^{\perp}$ then we can write $(V,\phi_1)=(\tilde{U},\theta)\oplus(\tilde{U}^{\perp},\eta)$. Assume $x\phi x^{\ast}$ is given by first matrix as above, let $U$ be the subspace of $V$ spanned by vector $x_i$ with $i\leq m$ or $m+r\leq i\leq m+r+t$ and $U'$ be the subspace spanned by other $x_i$. We also denote $U^{\perp}=\{v\in V:u\phi v^{\ast}=0\text{ for any }u\in U\}$ then $V=U\oplus U^{\perp}$. Then there exists an automorphism $\gamma$ of $(V,\phi)$ such that $U\gamma=\tilde{U},U^{\perp}\gamma=\tilde{U}^{\perp}$. Now $U'\gamma\subset\tilde{U}^{\perp}$ is a totally $\eta$-isotropic subspace thus there exists an automorphism $\gamma'$ of $(\tilde{U}^{\perp},\eta)$ such that $U'\gamma\gamma'\subset\sum_{m+t+r+1\leq i\leq n}\mathbb{B}\epsilon_i$. Viewing $\gamma'$ as an automorphism of $(V,\phi)$ and put $g_1=\gamma\gamma'$ we have (similarly for $y$)
\[
xg_1=\left[\begin{array}{ccccc}
1_t & 0 & 0 & 0 & 0\\
0 & 0 & 0 & 0 & u\\
0 & 0 & 1 & 0 & 0\\
0 & 0 & 0 & 1_t & 0\\
0 & 0 & 0 & 0 & v
\end{array}\right],yg_2=\left[\begin{array}{ccccc}
1_t & 0 & 0 & 0 & 0\\
0 & 0 & 0 & 0 & u'\\
0 & 0 & 1 & 0 & 0\\
0 & 0 & 0 & 1_t & 0\\
0 & 0 & 0 & 0 & v'
\end{array}\right].
\]
We further modify
\[
g_1\mapsto g_1\left[\begin{array}{ccccc}
0 & 0 & 0 & 1_t & 0\\
0 & 1 & 0 & 0 & 0\\
0 & 0 & 1 & 0 & 0\\
-1_t & 0 & 0 & 0 & 0\\
0 & 0 & 0 & 0 & 1
\end{array}\right]\text{ so }xg_1=\left[\begin{array}{ccccc}
0 & 0 & 0 & -1_t & 0\\
0 & 0 & 0 & 0 & u\\
0 & 0 & 1 & 0 & 0\\
1_t & 0 & 0 & 0 & 0\\
0 & 0 & 0 & 0 & v
\end{array}\right].
\]
Therefore
\[
w(g_1\times g_2)=\left[\begin{array}{cccccccccc}
0 & 0 & 0 & -1_t & 0 & -1 & 0 & 0 & 0 & 0\\
0 & 0 & 0 & 0 & 0 & 0 & u & 0 & 0 & u'\\
0 & 0 & 0 & 0 & 0 & 0 & 0 & -\zeta & 0 & 0\\
1_t & 0& 0 & 0 & 0 & 0 & 0 & 0 & 1 & 0\\
0 & 0 & 0 & 0 & 0 & 0 & v & 0 & 0 & v'
\end{array}\right].
\]
By our assumption $\mathrm{rank}(w)=N$, we must have $\left[\begin{array}{cc}u & u'\\
v & v'\end{array}\right]$ is of full rank. If $x\phi_1x^{\ast}$ equals the second matrix as above, then by the same argument we can obtain a similar result but without the term $1$ in the middle of $xg_1$ and $yg_2$ which contradicts the assumption $\mathrm{rank}(w)=N$. Suppose $\left[\begin{array}{cc}
a & b\\
c & d
\end{array}\right]=\left[\begin{array}{cc}u & u'\\
v & v'\end{array}\right]^{-1}$, multiplying
\[
\left[\begin{array}{ccccc}
-1_t & 0 & 0 & 0 & 0\\
0 & a & 0 & 0 & b\\
0 & 0 & -\zeta^{-1} & 0 & 0\\
0 & 0 & 0 & 1_t & 0\\
0 & c & 0 & 0 & d
\end{array}\right]\in\GL_N,
\]
on the left we then
get the desired form in the proposition.
\end{proof}

Put $V_t=\tau_t(G_{n}\times G_{n})\tau_t^{-1}\cap P_N$. Then by straightforward computation
\[
V_t=\{\beta\times\gamma\in P_{n}^t\times P_{n}^t:\kappa_t\pi_t(\beta)=\pi_t(\gamma)\kappa_t\},\kappa_t=\left[\begin{array}{ccc}
0 & 0 & -1_t\\
0 & 1_r & 0\\
1_t & 0 & 0
\end{array}\right].
\]
To simplify the computation, we may also use the modified representatives $\tilde{\tau}_t=\tau_t(1_{n}\times(\kappa_t\times 1_{2m-2t}))$ for $0\leq t\leq m$. Put $\tilde{V}_t=\tilde{\tau}_t(G_{n}\times G_{n})\tilde{\tau}_t^{-1}\cap P_N$. Then
\[
\tilde{V}_t=\{\beta\times\gamma\in P_{n}^t\times P_{n}^t:\pi_t(\beta)=\pi_t(\gamma)\}.
\]
One easily shows that (compare with Lemma 4.3 in \cite{Sh95}):
\[
P_{n}^t\times P_{n}^t=\bigcup_{\xi\in G_{2t+r}}\tilde{V}_t((\xi\times 1_{2m-2t})\times 1_{2n})=\bigcup_{\xi\in G_{2t+r}}\tilde{V}_t(1_{2n}\times(\xi\times 1_{2m-2t})),
\]
\[
P^N\tilde{\tau}_t(G_{n}\times G_{n})=\bigcup_{\xi,\beta,\gamma}P^N\tilde{\tau}_t((\xi\times 1_{2m-2t})\beta\times\gamma)=\bigcup_{\xi,\beta,\gamma}P^N\tilde{\tau}_t(\beta\times(\xi\times 1_{2m-2t})\gamma).
\]
where $\xi$ runs over $G_{2t+r}$, $\beta,\gamma$ runs over $P_{n}^t\backslash G_{n}$. In particular,
\[
P^N\tilde{\tau}_{m}(G_{n}\times G_{n})=\bigcup_{\xi\in G_{n}}P^N\tilde{\tau}_{m}(\xi\times 1_{n})=\bigcup_{\xi\in G_{n}}P^N\tilde{\tau}_{m}(1_n\times\xi).
\]

We denote $K_1(\mathfrak{n})=\prod_vK_v$ be the open compact subgroup of $G_n(\mathbb{A}_{\mathbf{h}})$ defined before. We also consider the open compact subgroup of $G_N$,
$K_{1}^N(\mathfrak{n})=\prod_{v \in \mathbf{h}}K_{v}^N$ with,
\[
K_{v}^N(\mathfrak{n})=\left\{\gamma=\left[\begin{array}{cc}
a & b\\
c & d
\end{array}\right]\in G_N(\mathbb{A}_{v})\cap M_N(\mathcal{O}_v):\gamma\equiv 1_N\text{ mod }\mathfrak{n}_v\right\}.
\]

\begin{prop}
Assume $\mathfrak{n}$ is coprime to $(2)$ and $(\zeta)$. Then for $v|\mathfrak{n}$, $\tilde{\tau}_m(\xi\times 1)\tilde{\tau}_m^{-1}\in P_N(\Q_v)K_v^N$ if and only if $\xi\in K_v$.
\end{prop}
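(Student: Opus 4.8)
The plan is to reduce the statement to an explicit local matrix computation and then translate ``$\,\in P_N(\Q_v)K_v^N\,$'' into a congruence condition on $\xi$ modulo $\mathfrak{n}_v$. First observe that the factor $\rho(1_n,\kappa_m)$ in $\tilde{\tau}_m=\tau_m\,\rho(1_n,\kappa_m)$ is irrelevant for this statement: since $\xi\times 1=\rho(\xi,1_n)$ lies in the first copy of $G_n$ inside $G_N$ while $\rho(1_n,\kappa_m)$ lies in the second, the two commute, so $\tilde{\tau}_m(\xi\times1)\tilde{\tau}_m^{-1}=\tau_m(\xi\times1)\tau_m^{-1}$, and it suffices to treat $\tau_m$. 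In $n\times n$ blocks $\tau_m=\left(\begin{smallmatrix}1_n & 0\\ X & 1_n\end{smallmatrix}\right)$ with $X=X^{\ast}$ the block matrix attached to $e_m=1_m$, and $\xi\times 1=R^{-1}\diag[\xi,1_n]R$ with $R$ the explicit doubling matrix of Section 2.3.

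Here the hypothesis that $\mathfrak{n}$ is coprime to $(2)$ and $(\zeta)$ enters: for $v\mid\mathfrak{n}$ one has $\tfrac12\in\mathcal{O}_v$ and $\zeta\in\mathcal{O}_v^{\times}$ (its reduced norm $-\alpha$ being a $v$-unit), so every entry of $R$ and of $R^{-1}$ lies in $\mathcal{O}_v$; hence $R,R^{-1}\in\GL_N(\mathcal{O}_v)$, $\tau_m,\kappa_m\in G_N(\mathcal{O}_v)$, and $\tilde{\tau}_m$ normalises $K_v^N$. In particular, for $\xi\in K_v\subset G_n(\mathcal{O}_v)$ we get $g:=\tilde{\tau}_m(\xi\times1)\tilde{\tau}_m^{-1}\in G_N(\mathcal{O}_v)$. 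I then invoke the following coset criterion, valid uniformly at split and at ramified $v$ because $J_n$ is a hyperbolic (maximally isotropic) form, so that $G_N$ is quasi-split over $\Q_v$, $P_N$ extends to a smooth parabolic $\mathcal{O}_v$-group scheme, and $G_N(\Q_v)=P_N(\Q_v)G_N(\mathcal{O}_v)$: an element $\left(\begin{smallmatrix}A & B\\ C & D\end{smallmatrix}\right)\in G_N(\Q_v)$ (blocks of size $n$) lies in $P_N(\Q_v)K_v^N$ if and only if $D\in\GL_n(\mathbb{B}_v)$ and $D^{-1}C\in M_n(\mathfrak{n}_v)$; equivalently, iff its lower block row $(C\ D)$, after left-normalisation by a $\GL_n(\mathbb{B}_v)$-factor to a primitive integral frame, reduces mod $\mathfrak{n}_v$ to $(0\ \ \ast)$.

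The computation then proceeds by expanding $g=\tau_m\,R^{-1}\diag[\xi,1_n]R\,\tau_m^{-1}$ in the $(m,r,m,m,r,m)$-block structure: one obtains $A,B,C,D$ that are linear in the nine blocks $a,b,c,g,e,f,h,l,d$ of $\xi$, with every coefficient lying in $\{\pm1,\pm\tfrac12,\pm\zeta^{\pm1}\}$; in particular $C$ turns out to be a signed, $(m,r,m)$-block-reversed copy of $\xi-1_n$, while $D$ carries the block $1_m$ in its lower-right corner (and $C$ the block $1_m-d$ in its upper-right corner). Feeding this into the criterion and cancelling the unit factors $\tfrac12,\zeta$ modulo $\mathfrak{n}_v$, the condition unwinds to: $\xi\in M_n(\mathcal{O}_v)$, its block-diagonal entries $\equiv 1$ and its below-block-diagonal entries $\equiv 0$ modulo $\mathfrak{n}_v$ --- that is, exactly $\xi\in K_v$. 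The converse runs the criterion backwards: $g\in P_N(\Q_v)K_v^N$ gives $D\in\GL_n(\mathbb{B}_v)$ and $D^{-1}C\in M_n(\mathfrak{n}_v)$, and reading this off the explicit $C$ and $D$ (the corner blocks $1_m$ in $D$ and $1_m-d$ in $C$ are what do the work) forces first $v$-integrality of all blocks of $\xi$ and then the congruences just described.

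The main obstacle is the bookkeeping in that last step: carrying out the matrix product with three blocks on each side and checking that the coset criterion produces precisely the (asymmetric) defining congruences of $K_v$ --- neither the a priori stronger ``$\xi\equiv 1_n$'' nor anything coarser. A secondary, milder point is to make sure the Iwasawa decomposition and the coset criterion are applied uniformly at the split and at the ramified places $v\mid\mathfrak{n}$; this is legitimate because the doubled form $J_n$ is split in every case.
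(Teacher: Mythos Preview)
Your plan is essentially the paper's: compute the lower half of $g=\tilde\tau_m(\xi\times1)\tilde\tau_m^{-1}$ explicitly and read off congruences from membership in $P_N(\Q_v)K_v^N$. The paper does this by writing $pg\in K_v^N$ for an unknown $p\in P_N(\Q_v)$ and solving for the blocks of $p$ and of $\xi$ simultaneously, rather than via your packaged criterion ``$D\in\GL_n(\mathbb B_v)$ and $D^{-1}C\in M_n(\mathfrak n_v)$''; but these amount to the same linear algebra. Your reduction from $\tilde\tau_m$ to $\tau_m$ (the second-factor $\kappa_m$ commutes with $\xi\times1$) is correct and harmless, though the paper keeps $\tilde\tau_m$ throughout.

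Where your proposal goes wrong is the predicted outcome. You anticipate that the criterion will unwind to \emph{exactly} the asymmetric congruences defining $K_v$ (below-diagonal $\equiv0$, diagonal $\equiv1$, above-diagonal free). It does not. The lower-left block $C$ of $g$ carries \emph{all nine} blocks of $\xi-1_n$ up to the unit factors $2,\zeta$; in the paper's own computation the bottom three block-rows have left half
\[
\begin{pmatrix}-h&-l&1-d\\-\zeta^{-1}g&-\zeta^{-1}(e-1)&-\zeta^{-1}f\\a-1&b&c\end{pmatrix},
\]
so (since $g\in G_N(\mathcal O_v)$ and $D$ reduces to a lower-unipotent matrix) your criterion collapses to $C\equiv0\pmod{\mathfrak n_v}$, forcing $b\equiv c\equiv f\equiv0$ as well --- i.e.\ $\xi\equiv1_n\pmod{\mathfrak n_v}$, strictly stronger than $\xi\in K_v$. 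The paper's proof in fact derives exactly these congruences (it explicitly obtains $b\equiv c\equiv f\equiv0$) and establishes only the ``only if'' implication; the ``if'' direction as literally stated fails already for $r=0$, $m=1$: if $h\equiv0$, $a\equiv d\equiv1$ but $c\not\equiv0$, no choice of $p_3\in\GL_n(\mathbb B_v)$ can make both $p_3D\equiv1$ and $p_3C\equiv0$. So the ``main obstacle'' you flagged --- getting $K_v$ rather than $\xi\equiv1_n$ --- is precisely where the bookkeeping goes the other way.
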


\begin{proof}
Let $\xi=\left[\begin{array}{ccc}
a & b & c\\
g & e & f\\
h & l & d
\end{array}\right]\in G_n$ where blocks has size $(m,r,m)\times(m,r,m)$. We calculate that
\[
\tilde{\tau}_m(\xi\times 1)\tilde{\tau}_m^{-1}=\left[\begin{array}{cccccc}
 & & & & & \\
 & & & & & \\
 & & & & & \\
-h & -l & -d+1 & d & \frac{l\zeta}{2} & 0\\
-\zeta^{-1}g & -\zeta^{-1}(e-1) & -\zeta^{-1}f & \zeta^{-1}f & \frac{e+1}{2} & 0\\
a-1 & b & c & -c & \frac{-b\zeta}{2} & 1
\end{array}\right].
\]
Suppose $\tilde{\tau}_m(\xi\times 1)\tilde{\tau}_m^{-1}\in P_N(\Q_v)K^N_v$, then there exist $p\in P_N(\Q_v)$, say
\[
\left[\begin{array}{cccccc}
 & & & & & \\
 & & & & & \\
 & & & & & \\
0 & 0 & 0 & p_{11} & p_{12} & p_{13}\\
0 & 0 & 0 & p_{21} & p_{22} & p_{23}\\
0 & 0 & 0 & p_{31} & p_{31} & p_{33}
\end{array}\right],
\]
such that $p\tilde{\tau}_m(\xi\times 1)\tilde{\tau}_m^{-1}\in K^N_v$. It is obvious that $p_{13}\equiv p_{23}\equiv 0\text{ mod }\mathfrak{c}$ and $p_{33}\equiv 1\text{ mod }\mathfrak{c}$. Comparing the third and fourth columns we have
\[
p_{11}(-d+1)+p_{12}(-\zeta^{-1}f)+p_{13}c\equiv 0\text{ mod }\mathfrak{n},p_{11}d+p_{12}(\zeta^{-1}f)+p_{13}(-c)\equiv 1\text{ mod }\mathfrak{n};
\]
\[
p_{21}(-d+1)+p_{22}(-\zeta^{-1}f)+p_{23}c\equiv 0\text{ mod }\mathfrak{n},p_{21}d+p_{22}(\zeta^{-1}f)+p_{23}(-c)\equiv 0\text{ mod }\mathfrak{n};
\]
\[
p_{31}(-d+1)+p_{32}(-\zeta^{-1}f)+p_{33}c\equiv 0\text{ mod }\mathfrak{n},p_{31}d+p_{32}(\zeta^{-1}f)+p_{33}(-c)\equiv 0\text{ mod }\mathfrak{n}.
\]
This forces $p_{11}\equiv 1\text{ mod }\mathfrak{n}$ and $p_{21}\equiv p_{31}\equiv 0\text{ mod }\mathfrak{n}$. Comparing the second and fifth column and using our assumption on $\mathfrak{n}$ we have
\[
p_{11}(-l)+p_{12}(-\zeta^{-1}(e-1))+p_{13}b\equiv0\text{ mod }\mathfrak{n},p_{11}(l\zeta)+p_{12}(e+1)+p_{13}(-b\zeta)\equiv0\text{ mod }\mathfrak{n};
\]
\[
p_{21}(-l)+p_{22}(-\zeta^{-1}(e-1))+p_{23}b\equiv0\text{ mod }\mathfrak{n},p_{21}(l\zeta)+p_{22}(e+1)+p_{23}(-b\zeta)\equiv2\text{ mod }\mathfrak{n};
\]
\[
p_{31}(-l)+p_{32}(-\zeta^{-1}(e-1))+p_{33}b\equiv0\text{ mod }\mathfrak{n},p_{31}(l\zeta)+p_{32}(e+1)+p_{33}(-b\zeta)\equiv0\text{ mod }\mathfrak{n}.
\]
The second line shows that $p_{22}\equiv e\equiv 1\text{ mod }\mathfrak{n}$ and then $p_{12}\equiv p_{32}\equiv 0\text{ mod }\mathfrak{n}$ by other formulas. Therefore, $p\in P_N(\Q_v)\cap K^N_v$. From above identities we already have 
\[
d-1\equiv f\equiv c\equiv l\equiv e-1\equiv b\equiv 0\text{ mod }\mathfrak{n}.
\]
The claim $\xi\in K_v$ then follows from above identities together with
\[
p_{i1}(-h)+p_{i2}(-\zeta^{-1}g)+p_{i3}(a-1)\equiv 0\text{ mod }\mathfrak{n},i=1,2,3.
\]
\end{proof}

\subsection{Integral Representation}

Let $\chi$ be a Hecke character whose conductor divides the fixed ideal $\mathfrak{n}$. Recall that we write $N=2n$. We define $\sigma \in G_N(\mathbb{A}_{\mathbf{h}})$ as $\sigma_v := 1$, the identity matrix, if  $v\nmid\mathfrak{n}$ and
$\sigma_v = \tilde{\tau}_m$ if $v|\mathfrak{n}$.

We then consider the weight $k$ Siegel-type Eisenstein series (twisted by $\tilde{\tau}_m$) on $G_N$ defined as
\[
\mathbf{E}(\mathfrak{g},s):=E^n_k(\mathfrak{g}\,\sigma^{-1},s;\chi),\,\,\,\,\mathfrak{g} \in G_N(\mathbb{A}),
\]

By decomposition of $G_N$ we can write
\[
\mathbf{E}(\mathfrak{g},s)=\sum_{t=0}^m\sum_{\gamma\in P_N\backslash P_N\tilde{\tau}_t(G\times G)}\phi(\gamma\mathfrak{g},s)=:\sum_{t=0}^m\mathbf{E}_t(\mathfrak{g},s).
\]
Assume $\mathfrak{g}=h\times g$ with $h,g\in G(\mathbb{A})$ and let $\mathbf{f}\in S_k(K_1(\mathfrak{n}))$ be a cusp form. 

We are going to study the integral
\[
\int_{G(\Q)\backslash G(\mathbb{A})/K_1(\mathfrak{n})K_{\infty}}\mathbf{E}(g\times h,s)\mathbf{f}(h)\mathbf{d}h.
\]

\begin{prop}
Assume $t<m$, then 
\[
\int_{G(\Q)\backslash G(\mathbb{A})/K_1(\mathfrak{n})K_{\infty}}\mathbf{E}_t(g\times h,s)\mathbf{f}(h)\mathbf{d}h=0.
\]
\end{prop}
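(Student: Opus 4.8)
The plan is to show that for $t<m$ the Eisenstein orbit $\mathbf{E}_t(g\times h,s)$, when integrated against the cusp form $\mathbf{f}(h)$, factors through an integral over a unipotent radical of a proper parabolic of $G$, which then vanishes by the cuspidality of $\mathbf{f}$. This is the standard "non-main-orbit vanishing" step in the doubling method, and I would follow the pattern of Shimura's argument (the analogue in \cite{Sh95} and in \cite{Sh00}).

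First I would unwind the definition of $\mathbf{E}_t$. By the coset decomposition in Proposition \ref{Coset Decomposition} together with the double-coset identity established just before this proposition, namely
\[
P^N\tilde{\tau}_t(G\times G)=\bigcup_{\xi\in G_{2t+r}}\bigcup_{\beta,\gamma\in P^t\backslash G}P^N\tilde{\tau}_t\big(\beta\times(\xi\times 1_{2m-2t})\gamma\big),
\]
one can rewrite, for $\mathfrak{g}=g\times h$,
\[
\mathbf{E}_t(g\times h,s)=\sum_{\gamma\in P^t\backslash G}\ \sum_{\beta\in P^t\backslash G}\ \sum_{\xi\in G_{2t+r}}\phi\big(\tilde{\tau}_t(\beta g\times(\xi\times 1_{2m-2t})\gamma h),s\big).
\]
Then I would unfold the integral: since $\mathbf{f}$ is left $G(\Q)$-invariant and the sum over $\gamma\in P^t\backslash G$ ranges over cosets of the $t$-Klingen parabolic $P^t$, the integral over $G(\Q)\backslash G(\mathbb{A})$ against $\sum_{\gamma\in P^t\backslash G}(\cdots)$ collapses to an integral over $P^t(\Q)\backslash G(\mathbb{A})$. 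Using $P^t=M^t U^t$ with $U^t$ the unipotent radical and the projection $\pi_t:P^t\to G_{2t+r}$, the remaining sum over $\xi\in G_{2t+r}$ together with the Levi part reorganizes the domain, and the key point is that the summand $\phi\big(\tilde{\tau}_t(\beta g\times(\xi\times 1)\gamma h),s\big)$, for $t<m$, does not depend on the $U^t(\mathbb{A})$-variable of $h$ — or more precisely depends on it only through a character that is trivial on $U^t(\Q)$ but whose integral over a further nontrivial sub-unipotent vanishes. This is where the explicit shape of $V_t=\tilde\tau_t(G\times G)\tilde\tau_t^{-1}\cap P_N=\{\beta\times\gamma\in P^t\times P^t:\pi_t(\beta)=\pi_t(\gamma)\}$ enters: since $t<m$, the parabolic $P^t$ is proper, so $U^t$ is a genuine nontrivial unipotent radical of a proper parabolic of $G$.

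After this reorganization the inner integral becomes $\int_{U^t(\Q)\backslash U^t(\mathbb{A})}\mathbf{f}(u h)\,\mathbf{d}u$ (times a factor independent of $u$), which is $0$ because $\mathbf{f}\in\mathcal{S}_k(K_1(\mathfrak{n}))$ is a cusp form and $U^t$ is the unipotent radical of the proper parabolic $P^t$ when $t<m$ (recall $P^m=G$, which is exactly why the case $t=m$ is excluded). Hence $\mathbf{E}_t(g\times h,s)$ integrated against $\mathbf{f}(h)$ vanishes.

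The main obstacle is the bookkeeping in the second step: verifying carefully that $\phi\big(\tilde{\tau}_t(\beta g\times \gamma' h),s\big)$ is genuinely left-invariant under (the image under the doubling embedding of) $1\times U^t(\mathbb{A})$, i.e. that $\tilde\tau_t\big(1\times u\big)\tilde\tau_t^{-1}$ lies in $P_N(\mathbb{A})$ for $u\in U^t$ and leaves $\phi(\cdot,s)$ invariant. This follows from the identification of $\tilde V_t$ above — since $1\times U^t\subset P^t\times P^t$ and $\pi_t$ kills $U^t$, the condition $\pi_t(1)=\pi_t(u)$ is automatic, so $1\times U^t\subset\tilde V_t\subset P_N$ — but one must also check the $\phi$-invariance, i.e. that the $\chi$- and $j$-factors are unchanged, which amounts to the statement that $\tilde\tau_t(1\times u)\tilde\tau_t^{-1}$ has trivial "$d$-block determinant" contribution and trivial automorphy factor. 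Granting that, the vanishing is immediate; the rest is a routine Fubini argument together with the defining property of cusp forms (and, to justify interchanging sum and integral, the absolute convergence of the Eisenstein series for $\mathrm{Re}(s)\gg 0$, with the general case by meromorphic continuation).
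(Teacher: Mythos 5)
Your proposal is correct and follows essentially the same route as the paper's proof: expand $\mathbf{E}_t$ via the coset decomposition of Proposition \ref{Coset Decomposition}, unfold the $h$-integral to $P^t_n(\Q)\backslash G(\mathbb{A})$, split off the inner integration over $U_t(\Q)\backslash U_t(\mathbb{A})$, use that $\xi$ normalizes $U_t$ and that $\tilde{\tau}_t(1\times u)\tilde{\tau}_t^{-1}$ lies in $P_N$ (via the description of $\tilde V_t$) with trivial $\phi$-factors, and conclude by the cuspidality of $\mathbf{f}$. The points you flag as needing care (the $P_N$-membership and the invariance of $\phi$ under the conjugated unipotent) are exactly the facts the paper invokes, so no further comparison is needed.
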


\begin{proof}
Let $U_t$ be the unipotent radical of $P^t_n$. Then the integral equals
\[
\int_{G(\Q)\backslash G(\mathbb{A})/K_1(\mathfrak{n})K_{\infty}}\sum_{\xi\in G_{2t+r}}\sum_{\beta,\gamma\in P^t_n\backslash G}\phi(\tilde{\tau}_t((\xi\times 1_{2n-2t})\beta g\times\gamma h),s)\mathbf{f}(h)\mathbf{d}h
\]
\[
=\int_{P^t_n\backslash G(\mathbb{A})/K_1(\mathfrak{n})K_{\infty}}\sum_{\xi\in G_{2t+r}}\sum_{\gamma\in P^t_n\backslash G_n}\phi(\tilde{\tau}_t((\xi\times 1_{2n-2t}) g\times\gamma h),s)\mathbf{f}(h)\mathbf{d}h
\]
\[
=\int_{U_t(\mathbb{A})P^t_n(\Q)\backslash G(\mathbb{A})/K_1(\mathfrak{n})K_{\infty}}\int_{U_t(\Q)\backslash U_t(\mathbb{A})}\sum_{\xi,\gamma}\phi(\tilde{\tau}_t(\xi ng\times\gamma h),s)\mathbf{f}(nh)\mathbf{d}n\mathbf{d}h.
\]
Since $\xi$ normalizes $U_t$ and $\tilde{\tau}_t(n\times 1)\in P_n$ this equals
\[
=\int_{U_t(\mathbb{A})P^t_n(\Q)\backslash G(\mathbb{A})/K_1(\mathfrak{n})K_{\infty}}\int_{U_t(\Q)\backslash U_t(\mathbb{A})}\sum_{\xi,\gamma}\phi(\tilde{\tau}_t(\xi g\times\gamma h),s)\mathbf{f}(nh)\mathbf{d}n\mathbf{d}h,
\]
which vanishes by the cuspidality of $\mathbf{f}$.
\end{proof}

Therefore
\[
\int_{G(\Q)\backslash G(\mathbb{A})/K_1(\mathfrak{n})K_{\infty}}\mathbf{E}(g\times h,s)\mathbf{f}(h)\mathbf{d}h=\int_{G(\Q)\backslash G(\mathbb{A})/K_1(\mathfrak{n})K_{\infty}}\mathbf{E}_m(g\times h,s)\mathbf{f}(h)\mathbf{d}h.
\]
\[
=\int_{G(\mathbb{A})/K_1(\mathfrak{n})K_{\infty}}\phi(\tilde{\tau}_m(g\times h),s)\mathbf{f}(h)\mathbf{d}h.
\]
The infinite part is calculated in following lemma.

\begin{lem} \label{Reproducing Kernel} For $k + Re(s) > 2n+1$ we have,
\[
\int_{G_{\infty}/K_{\infty}}\phi_{\infty}(\tilde{\tau}_m(g_{\infty}\times h_{\infty}),s)\mathbf{f}(h_{\mathbf{h}}\cdot h_{\infty})\mathbf{d}h_{\infty}=c_k(s)\mathbf{f}(h_{\mathbf{h}}\cdot g_{\infty}),
\]
with

\[
c_k(s) = \alpha(s) \pi^{\frac{n(n-1)}{2}} \frac{\Gamma(s+k-2n+3) \Gamma(s+k-2n+5) \ldots \Gamma(s+k-1)}{\Gamma(s+k-n+2) \Gamma(s+k-n+3) \ldots \Gamma(s+k)},
\]

\end{lem}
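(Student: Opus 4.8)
The plan is to realize the integral as a standard archimedean ``reproducing kernel'' computation for holomorphic forms on the tube-type domain $\mathfrak{Z}_N$ restricted along the doubling embedding $\iota\colon\mathfrak{Z}\times\mathfrak{Z}\to\mathfrak{Z}_N$, exactly along the lines of Shimura (e.g. \cite{Sh00}, and \cite{Sh95}) but carried out in our coordinates. First I would translate everything to the symmetric space: write $g_\infty z_0 = z$, $h_\infty z_0 = \zeta'$, use the $K_\infty$-invariance to pass from an integral over $G_\infty/K_\infty$ to an integral over $\mathfrak{Z}$ against the invariant measure $\mathbf{d}\zeta'$, and express $\phi_\infty(\tilde\tau_m(g_\infty\times h_\infty),s)$ via Proposition \ref{diagonal embedding}. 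Concretely, $\phi_\infty$ is built out of $j(\tilde\tau_m(g_\infty\times h_\infty),Z_0)$, and by part (2) of Proposition \ref{diagonal embedding} together with $\iota(z_0\times z_0)=Z_0$ this factor unwinds into $j(g_\infty,z_0)\,\overline{j(h_\infty,z_0)}$ times the ``cross ratio'' factor $\det B(z,\zeta')$, i.e. essentially $\delta(z,\zeta')^{-1}$ up to normalization. Thus the kernel becomes (a power of) $\delta(z,\bar\zeta')$, the canonical automorphy kernel on $\mathfrak{Z}$, and the claimed identity reduces to
\[
\int_{\mathfrak{Z}} \mathbf{f}(h_{\mathbf h},\zeta')\,\delta(z,\bar\zeta')^{-(s+k)/?}\,\delta(\zeta')^{k}\,\mathbf{d}\zeta' \;=\; c_k(s)\,\mathbf{f}(h_{\mathbf h},z),
\]
where the precise exponents are dictated by matching the $j$-automorphy weights on both sides (this bookkeeping is where one must be careful, and it is the reason for the assumption $\mathrm{rank}$ conditions and the convergence hypothesis $k+\mathrm{Re}(s)>2n+1$).

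The second step is to evaluate this integral. Since $\mathbf{f}$ is holomorphic and the kernel $\delta(z,\bar\zeta')^{-\mu}$ is, for suitable $\mu$, a reproducing kernel for the weighted Bergman space on the bounded realization $\mathfrak{B}_n$, the mean-value property (or the explicit Bergman reproducing formula on the type-D bounded domain, available from \cite{Hua}) gives that the integral is $\mathbf{f}(h_{\mathbf h},z)$ times a constant depending only on $n$, $k$, $s$. To pin down that constant I would reduce, by $\mathcal{G}$-invariance, to the single point $z=z_0=i\cdot 1_n$ (equivalently $z=0$ on $\mathfrak{B}_n$), where the integral becomes a concrete Siegel-type beta integral over $\{w\in\mathbb{C}^n_n:\transpose{w}=-w,\ ww^*<1_n\}$, namely $\int \det(1-ww^*)^{\lambda}\,d\mu(w)$ for an appropriate exponent $\lambda=\lambda(k,s,n)$. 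Such integrals are classical: on the domain of antisymmetric complex matrices of type $\mathfrak{R}_{\mathrm{III}}$ Hua's formula evaluates them as a ratio of products of Gamma functions with step $2$ in the arguments, together with the volume factor $\pi^{n(n-1)/2}$. Matching Hua's formula with our normalization of $\mathbf{d}z$ (which carries the $\delta(z)^{-n+1}$ twist) produces exactly the stated
\[
c_k(s)=\alpha(s)\,\pi^{\frac{n(n-1)}{2}}\,\frac{\Gamma(s+k-2n+3)\Gamma(s+k-2n+5)\cdots\Gamma(s+k-1)}{\Gamma(s+k-n+2)\Gamma(s+k-n+3)\cdots\Gamma(s+k)},
\]
the numerator being the $n/?$-fold product coming from the ``$<1_n$'' constraint and the step-$2$ being the hallmark of type D, and $\alpha(s)$ absorbing elementary powers of $2$, $i$ and the choice of normalizing constants in $\phi_\infty$ and in the Cayley transform.

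The main obstacle I anticipate is not the existence of the reproducing identity — that is soft, following from holomorphy of $\mathbf{f}$, $\mathcal{G}$-invariance, and Proposition \ref{diagonal embedding} — but rather the precise bookkeeping of exponents and normalizing constants: one must track (i) the Cayley transform $\mathfrak{H}\to\mathfrak{B}$ and its Jacobian, (ii) the factor $\det B(z_1,z_2)$ versus $\delta(z_1,z_2)$ and versus the genuine automorphy factor $j$ on $G_{N\infty}$, and (iii) the $\delta(z)^{-n+1}$ appearing in the invariant measure $\mathbf{d}z$, which is precisely what converts ordinary Gamma functions into the $\pi^{n(n-1)/2}$-decorated Gamma-products $\Gamma_m$ of Proposition 4.1. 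Getting the shift in the Gamma arguments exactly right (step $2$, starting at $s+k-2n+3$) is where the non-tube versus tube distinction and the specific form of $u_0$, $\psi_\infty$ enter, so I would verify it by a direct rank-$1$ or $n=2$ check against the Siegel/Hermitian cases in \cite{Sh00}. Once the constant is identified at $z=z_0$, invariance propagates it to all $z$, and replacing $g_\infty$ by $h_{\mathbf h}g_\infty$ and absorbing the finite-adelic argument gives the stated formula verbatim.
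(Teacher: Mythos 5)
Your proposal follows essentially the same route as the paper: unwind the automorphy factor of $\tilde{\tau}_m$ along the doubling embedding so that the kernel becomes a power of $\delta(w,z)$, invoke the standard reproducing-kernel identity for holomorphic forms (quoted in the paper from \cite[Appendix A2]{Sh97}), and pin down the constant via Hua's beta integral over the bounded type-D domain, which produces exactly the step-$2$ Gamma products and the factor $\pi^{\frac{n(n-1)}{2}}$. The exponent bookkeeping you leave open (your ``$?$'' marks) is precisely what the paper settles by the substitution $s\mapsto (s-k)/2$ and $\lambda=s+k-n+1$, so the plan is correct and matches the paper's argument.
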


where $\alpha(s)$ is a holomorphic function on $s \in \mathbb{C}$ such that $\alpha(\lambda) \in \overline{\mathbb{Q}}$ for all $\lambda \in \mathbb{Q}$.
\begin{proof}
Note that
\[
j(\tilde{\tau}_m(g_{\infty}\times h_{\infty}),z_0\times z_0)=j(\tilde{\tau}_m,g_{\infty}z_0\times h_{\infty}z_0 )j(g_{\infty},z_0)\overline{j(h_{\infty},z_0)}.
\]
Put $w=h_{\infty}z_0,z=g_{\infty}z_0$ then since
\[
f(w)=j(h_{\infty},z_0)^{k}\mathbf{f}(h_{\infty}),j(\tilde{\tau}_m,z\times w)=\delta(w,z),
\]
the integral becomes
\[
j(g_{\infty},z_0)^{-k}\delta(z)^{\frac{s-k}{2}}\int_{\mathfrak{Z}}\delta(w,z)^{-k}|\delta(w,z)|^{k-s}\delta(w)^{\frac{k+s}{2}}f(w)dw.
\]

This kind of integral is calculated in \cite[Appendix A2]{Sh97} and \cite{Hua}. In particular it is shown there that for $k+ Re(s) > n+\frac{1}{2}$
\[
\int_{\mathfrak{Z}}\delta(w,z)^{-k}|\delta(w,z)|^{-2s}\delta(w)^{s+k}f(w) dw = \widetilde{c}_k(s) f(z) \delta(z)^{-s},
\]
where $\widetilde{c}_k(s)$ is a function on $s$ which does not depend on $f$. Indeed as it is explained in \cite{Sh97} the quantity $\widetilde{c}_k(s)$ is independent of $f$ and it is equal to
\[
\widetilde{c}_k(s) = \alpha(s) \int_{\mathfrak{B}} \det(I + z \bar{z})^{s+k} \textbf{d}z,
\] 
where $\textbf{d}z$ is the invariant measure on the bounded domain and is given as $\textbf{d}z = \det(I + z \bar{z})^{-n+1} dz$, and $\alpha(s)$ is a holomorphic function on $s$ such that $\alpha(\lambda) \in \overline{\mathbb{Q}}$ for all $\lambda \in \mathbb{Q}$ (actually it can be made precise but we do not need it here). But this last integral has been computed in \cite[page 46]{Hua} from which we obtain that
\[
\widetilde{c}_k(s) = \alpha(s) \pi^{\frac{n(n-1)}{2}} \frac{\Gamma(2\lambda +1) \Gamma(2 \lambda +3) \ldots \Gamma(2 \lambda +2n-3)}{\Gamma(2\lambda +n) \Gamma(2\lambda +n+1) \ldots \Gamma(2\lambda + 2n -2)},
\]
where $\lambda = s+k -n+1$.

Setting now $s \mapsto \frac{s-k}{2}$ we obtain that.
\[
j(g_{\infty},z_0)^{-k}\delta(z)^{\frac{s-k}{2}}\int_{\mathfrak{Z}}\delta(w,z)^{-k}|\delta(w,z)|^{k-s}\delta(w)^{\frac{k+s}{2}}f(w)dw =
\]
\[
\widetilde{c}_k((s-k)/2)j(g_{\infty},z_0)^{-k}f(z)=c_k(s)\mathbf{f}(h_{\mathbf{h}}\cdot g_{\infty}),
\]
where we have set $c_k(s) := \widetilde{c}_k((s-k)/2)$.
\end{proof}

By changing variables, it remains to calculate
\[
\int_{K_1(\mathfrak{n})\backslash G(\mathbb{A}_{\mathbf{h}})}\phi_{\mathbf{h}}(\tilde{\tau}_m(h\times 1),s)\mathbf{f}(gh^{-1})\mathbf{d}h.
\]
Note that for $v|\mathfrak{n}$, $\phi_{v}$ is nonzero unless $h_{v}\in K_v$. For $v\nmid\mathfrak{n}$, using Cartan decomposition, we can write $h_v=h_1\cdot\mathrm{diag}[\hat{r},1,r]\cdot h_2$ with $h_1,h_2\in G(\mathcal{O}_v)$ and $r\in\GL_m(\mathbb{B}_v)\cap M_m(\mathcal{O}_v)$. By definition of $\phi$,
\[
\phi_v(\tilde{\tau}_m(h_v\times 1))=\chi_{v}(\det(r))|\det(r)|_v^{-s}.
\]
Since $h_v$ runs through $G(\mathcal{O}_v)\backslash G(\mathcal{O}_v)\mathrm{diag}[\hat{r},1,r]G(\mathcal{O}_v)$, comparing with the action of Hecke operators we can write the integral as 
\[
\sum_{\xi\in G(\mathcal{O}_{\mathbf{h}})\backslash\mathfrak{X}/G(\mathcal{O}_{\mathbf{h}})}\mathbf{f}(g)|T_{\xi}\cdot\chi_{\mathbf{h}}(\det(r))|\det(r)|_{\mathbf{h}}^{-s}.
\]
Here $\xi=\mathrm{diag}[\hat{r},1,r]$ and $T_{\xi}$ is the Hecke operator corresponds to double coset $[G(\mathcal{O}_v)\xi G(\mathcal{O}_v)]$. Assume $\mathbf{f}$ is an eigenform such that $\mathbf{f}|T_{\xi}=\lambda_{\mathbf{f}}(\xi)$. Then the sum can be further written as
\[
\sum_{G(\mathcal{O}_{\mathbf{h}})\backslash\mathfrak{X}/G(\mathcal{O}_{\mathbf{h}})}\lambda_{\mathbf{f}}(\xi)\cdot\chi_{\mathbf{h}}(\det(r))|\det(r)|_{\mathbf{h}}^{-s}\mathbf{f}(g)=D(s,\mathbf{f},\chi)\mathbf{f}(g).
\]
We summarize the discussion in following theorem.

\begin{thm} \label{Integral Representation}
Let $\mathbf{f}\in\mathcal{S}_k^n(K_1(\mathfrak{n}))$ be an eigenform and assume $\mathfrak{n}$ is coprime to $(2\zeta)$. Then
\[
\int_{G(\Q)\backslash G(\mathbb{A})/K_1(\mathfrak{n})K_{\infty}}\mathbf{E}(g\times h,s)\mathbf{f}(h)\mathbf{d}h=c_k(s)D(s,\mathbf{f},\chi)\mathbf{f}(g).
\]
\end{thm}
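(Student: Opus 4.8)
The statement is essentially a bookkeeping corollary of the material built up in this section, so the plan is to assemble those pieces in the right order.

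\emph{Reduction to the main orbit.} First I would substitute the orbit decomposition $\mathbf{E}(g\times h,s)=\sum_{t=0}^{m}\mathbf{E}_{t}(g\times h,s)$ coming from Proposition~\ref{Coset Decomposition} and integrate term by term against $\mathbf{f}$ over $G(\Q)\backslash G(\mathbb{A})/K_{1}(\mathfrak{n})K_{\infty}$. By the preceding vanishing proposition every summand with $t<m$ integrates to zero, the point being that $\xi$ normalises the unipotent radical $U_{t}$ and $\tilde{\tau}_{t}(n\times 1)\in P_{N}$, so the inner integral over $U_{t}(\Q)\backslash U_{t}(\mathbb{A})$ is a constant term of the cusp form $\mathbf{f}$. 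Only $\mathbf{E}_{m}$ survives, and since the stabiliser $\tilde{V}_{m}$ is the diagonal copy of $G$ sitting inside $P_{N}$, summing over $P_{N}\backslash P_{N}\tilde{\tau}_{m}(G\times G)$ against a fundamental domain for $G(\Q)\backslash G(\mathbb{A})$ unfolds the left-hand side to $\int_{G(\mathbb{A})/K_{1}(\mathfrak{n})K_{\infty}}\phi(\tilde{\tau}_{m}(g\times h),s)\mathbf{f}(h)\,\mathbf{d}h$. This manipulation is legitimate for $k+\Re(s)$ large (in particular in the range $k+\Re(s)>2n+1$ needed below), the general case following by meromorphic continuation of both sides.

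\emph{The archimedean factor.} Next I would factor $\phi=\phi_{\mathbf{h}}\phi_{\infty}$ and $\mathbf{d}h=\mathbf{d}h_{\mathbf{h}}\mathbf{d}h_{\infty}$ and carry out the integration over $G_{\infty}/K_{\infty}$ first. By Lemma~\ref{Reproducing Kernel} this inner integral equals $c_{k}(s)\mathbf{f}(h_{\mathbf{h}}\cdot g_{\infty})$, with $c_{k}(s)$ the explicit quotient of Gamma factors recorded there. A change of variables, using that $\tilde{\tau}_{m}\rho(\beta,\beta)\tilde{\tau}_{m}^{-1}\in\tilde{V}_{m}\subset P_{N}$ (which lets one slide the diagonal factor $\rho(g_{\mathbf{h}},g_{\mathbf{h}})$ past $\tilde{\tau}_{m}$ at the cost of a $P_{N}$-element that is absorbed into the transformation law of $\phi_{\mathbf{h}}$), then rewrites the remaining finite-adelic integral as $c_{k}(s)\int_{K_{1}(\mathfrak{n})\backslash G(\mathbb{A}_{\mathbf{h}})}\phi_{\mathbf{h}}(\tilde{\tau}_{m}(h\times 1),s)\mathbf{f}(gh^{-1})\,\mathbf{d}h$.

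\emph{The finite places and the Hecke eigenvalue.} For $v\mid\mathfrak{n}$ I would invoke the proposition identifying the support of $\phi_{v}$ (valid since $\mathfrak{n}$ is coprime to $(2\zeta)$): $\phi_{v}(\tilde{\tau}_{m}(h_{v}\times 1),s)=0$ unless $h_{v}\in K_{v}$, so these components are pinned down and contribute trivially. For $v\nmid\mathfrak{n}$ I would use the Cartan decomposition $h_{v}=h_{1}\,\diag[\hat{r},1,r]\,h_{2}$ with $h_{1},h_{2}\in G(\mathcal{O}_{v})$ and $r\in M$; by the definition of $\phi$ one computes $\phi_{v}(\tilde{\tau}_{m}(h_{v}\times 1),s)=\chi_{v}(\det r)|\det r|_{v}^{-s}$, which is constant on the double coset $G(\mathcal{O}_{v})\diag[\hat{r},1,r]G(\mathcal{O}_{v})$. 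Summing over these double cosets and matching with the action of the Hecke operator $T_{\xi}$ attached to $\xi=\diag[\hat{r},1,r]$, the eigenform relation $\mathbf{f}\mid T_{\xi}=\lambda_{\mathbf{f}}(\xi)\mathbf{f}$ together with $l(\xi)=\det r$ collapses the finite integral to $\sum_{\xi}\lambda_{\mathbf{f}}(\xi)\chi^{\ast}(l(\xi))l(\xi)^{-s}\,\mathbf{f}(g)=D(s,\mathbf{f},\chi)\mathbf{f}(g)$, where $\xi$ runs over $K_{1}(\mathfrak{n})\backslash\mathfrak{X}/K_{1}(\mathfrak{n})$. Multiplying by the factor $c_{k}(s)$ from the previous step gives the asserted identity.

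\emph{Main obstacle.} All the genuine effort lies in lemmas already established — the coset decomposition $P_{N}\backslash G_{N}/G\times G$, the support computation for $\phi_{v}$ at $v\mid\mathfrak{n}$, and the reproducing-kernel integral of Hua type — so for the theorem itself the only delicate point is making the change of variables in the archimedean step precise, i.e.\ tracking how the character defining $\phi$ interacts with the doubling embedding $\rho$ and the twist by $\tilde{\tau}_{m}$, and confirming that no inadvertent lowering of level occurs at the ramified places. The convergence needed to justify the unfolding is inherited from the range $k+\Re(s)>2n+1$ of Lemma~\ref{Reproducing Kernel}.
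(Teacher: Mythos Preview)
Your proposal is correct and follows essentially the same approach as the paper: orbit decomposition and vanishing of the $t<m$ terms, unfolding the $t=m$ term, the archimedean reproducing-kernel computation giving $c_{k}(s)$, the change of variables to $K_{1}(\mathfrak{n})\backslash G(\mathbb{A}_{\mathbf{h}})$, the support restriction at $v\mid\mathfrak{n}$, and the Cartan decomposition at $v\nmid\mathfrak{n}$ matched with the Hecke action. Indeed the paper presents the theorem simply as a summary of the preceding discussion, and your write-up tracks that discussion step for step.
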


\section{Algebraic Modular Forms and Differential Operators}

In order to move from the analytic considerations discussed so far to algebraic questions, we need to discuss the notion of an algebraic modular form in our setting.
 The notion of algebraic modular forms on Hermitian symmetric space is well understood. There are mainly four characterizations of algebraic modular forms: via Fourier-Jacobi expansion, CM points, pullback to elliptic modular forms and canonical model of automorphic vector bundle. For example, in \cite[III.7]{M} automorphic forms are interpreted as sections of certain automorphic vector bundles. The canoncial model of  automorphic vector bundles then defines a subspace of algebraic automorphic forms (see also \cite{H1,H2}). It is also proved there that this definition is equivalent to the definition in terms of values at CM points. In \cite{G84}, Garrett gives three characterisations of algebraicity for scalar valued modular forms via CM points, Fourier-Jacobi expansion and pullback to elliptic modular forms. They are also proved to be equivalent.

However, in this work, instead of simply referring to the results of Harris as in \cite{H1,H2} we have decided to offer a definition of algebraic modular forms via CM points using the rather more explicit language of Shimura as in \cite{Sh00}, without need to refer to the more advanced and general theory as developed by Deligne, Milne and others. Indeed our approach of the definition of CM points and the underlying periods follows an idea in the first works of Shimura on the subject (\cite{Sh67}), where one ``tensors'' a given embedding $h: K_1 \times \ldots \times K_n \hookrightarrow G$, of CM field $K_i$, with another CM field $K$, disjoint to the $K_i$'s to obtain a point whose associated abelian variety is of CM type (see also \cite[proof of Theorem 6.4]{Deligne}). In this way we will be able to define and study the CM points in our case by considering an embedding of our group in to a unitary group, after a choice of a imaginary quadratic field. However we will show that our definition of CM points and the attached periods is independent of the choice of the auxiliary imaginary quadratic field.  This should be seen as our main contribution in this section which we believe it is worth appearing in the literature and could be helpful to other researchers, thanks to its rather explicit nature and basic background, Finally we will show that in certain case, when the underlying symmetric space is a tube domain, i.e a Siegel Domain of Type I, our definition is equivalent to standard definition using Fourier expansion.

\subsection{CM points} We introduce the following setting, with some small repetition of what we have discussed so far.

We let $\mathbb{B}$ to be a definite quaternion algebra over $\Q$, $T^{\ast}=-T\in M_n(\mathbb{B})$ a skew-hermitian matrix and define the algebraic group
\[
G:=G(T):=\{g\in\mathrm{SL}_n(\mathbb{B}):g^{\ast}Tg=T\}.
\]

Let $K_i$, $i=1,\ldots,n$ be imaginary quadratic fields and consider the CM algebra $Y=K_1\times...\times K_n$ and $Y^1=\{y\in Y:yy^{\rho}=1\}$ with $\rho$ induced by the nontrivial involutions (i.e. complex conjugation) on each $K_i$. We are interested in embeddings $h:Y^1\to G(T)$. Clearly $h(Y^1)\subset G(T)(\R)$ and $(Y^1 \otimes_{\mathbb{Q}} \mathbb{R})^{\times}$ is a compact subgroup of $G(T)(\R)$. 
 Let us show that there always exists such an embedding.

Without loss of generality we may write $T=\mathrm{diag}[a_1,...,a_n]$ in diagonal form.  We then select as imaginary quadratic fields $K_i:=\Q(a_i)$, for $i=1,\ldots n$, and define the embedding
\[
h:Y^1\to G(T),(y_1,...,y_n)\mapsto\mathrm{diag}[y_1,...,y_n].
\]

Back to our general considerations, we select an imaginary quadratic field $K$ which is different from the $K_i$'s above, and
splits $\mathbb{B}$. It is easy to see that that there exists always such a field $K$. We now fix an embedding $M_n(\mathbb{B})\to M_{2n}(K)$. Denote the image of $T$ in $M_{2n}(K)$ by $\mathcal{T}$ and the unitary group
\[
U(\mathcal{T}):=\{g\in\mathrm{GL}_{2n}(K):g^{\ast}\mathcal{T}g=\mathcal{T}\}.
\] 
We note that the $\R$-group of $U(\mathcal{T})$ is isomorphic to
\[
U(n,n)=\left\{g\in\GL_{2n}(\C):g^{\ast}\left[\begin{array}{cc}
i\cdot 1_n & 0\\
0 & -i\cdot 1_n
\end{array}\right]g=\left[\begin{array}{cc}
i\cdot 1_n & 0\\
0 & -i\cdot 1_n
\end{array}\right]\right\}.
\]
Its action on the bounded domain, (see for example \cite{Sh00}), 
\[
\mathcal{B}=\{z\in M_n(\C):1-z^{\ast}z>0\},
\]
is defined by $gz=(az+b)(cz+d)^{-1}$ for $g=\left[\begin{array}{cc}
a & b\\
c & d
\end{array}\right]$, with the obvious block matrices. The two factors of automorphy are given by $\lambda(g,z)=\overline{c}\transpose{z}+\overline{d}$, and $\mu(g,z)=cz+d.$ The embedding $M_n(\mathbb{B})\to M_{2n}(K)$ induces an embedding $\mathfrak{i}:G(T)\to U(\mathcal{T})$ which is compatible with natural inclusion $\iota:\mathfrak{B}\to\mathcal{B}$. We will view $G(T)$ (resp. $\mathfrak{B}$) as a subgroup (resp. subspace) of $U(\mathcal{T})$ (resp. $\mathcal{B}$) under this embedding.

 \begin{lem}
(1) $Y$ is spanned by $Y^1$ over $\mathbb{Q}$. In particular there exists an element $\beta \in Y^1$ such that $Y = \mathbb{Q}[\beta]$ and $\beta_1, \ldots, \beta_{n}, \beta_1^{\rho}, \ldots, \beta^{\rho}_n$ are pairwise distinct.\\
(2) There is a unique $w \in \mathfrak{B}$ which is a common fixed point for $h(Y^1)$. 
\end{lem}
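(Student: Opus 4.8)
The proof splits into the elementary field-theoretic statement (1) and the fixed-point statement (2), and the two are linked: (1) is used in the uniqueness part of (2).

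For (1), I would first reduce to a single factor. Since $\rho$ and the condition $yy^{\rho}=1$ are componentwise, $Y^{1}=\prod_{i=1}^{n}K_{i}^{1}$ with $K_{i}^{1}=\{y\in K_{i}:yy^{\rho}=1\}$. Writing $K_{i}=\mathbb{Q}(\sqrt{-d_{i}})$, the set $K_{i}^{1}(\mathbb{Q})$ is the conic $a^{2}+d_{i}b^{2}=1$, which has the rational point $(1,0)$ and hence infinitely many rational points, and every element of $K_{i}^{1}$ other than $\pm 1$ generates $K_{i}$ over $\mathbb{Q}$ (because $K_{i}^{1}\cap\mathbb{Q}=\{\pm 1\}$). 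Since $1\in K_{i}^{1}$ for every $i$, the $\mathbb{Q}$-span of $Y^{1}$ contains, in the $i$-th coordinate, the difference set $K_{i}^{1}-K_{i}^{1}$, hence all of $K_{i}$ there; so $Y^{1}$ spans $Y$. For the element $\beta$ I would choose $\beta_{i}\in K_{i}^{1}\setminus\mathbb{Q}$ inductively so as to avoid the finite set $\{\beta_{1},\beta_{1}^{\rho},\ldots,\beta_{i-1},\beta_{i-1}^{\rho}\}$, which is possible because $K_{i}^{1}\setminus\mathbb{Q}$ is infinite. Then $\beta_{1},\ldots,\beta_{n},\beta_{1}^{\rho},\ldots,\beta_{n}^{\rho}$ are pairwise distinct, each minimal polynomial $\mu_{\beta_{i}}(x)=x^{2}-\mathrm{tr}(\beta_{i})\,x+1$ is an irreducible quadratic over $\mathbb{Q}$, and the $\mu_{\beta_{i}}$ are pairwise coprime; by the Chinese remainder theorem $\mathbb{Q}[\beta]\cong\prod_{i}\mathbb{Q}[x]/(\mu_{\beta_{i}})\cong\prod_{i}K_{i}=Y$.

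For (2), existence is immediate: $C:=(Y^{1}\otimes_{\mathbb{Q}}\mathbb{R})^{\times}$ is compact and acts through $h$ by holomorphic automorphisms of the bounded domain $\mathfrak{B}$, so by the Cartan fixed-point theorem it has a fixed point $w\in\mathfrak{B}$, which is then a common fixed point of $h(Y^{1})$. For uniqueness I would pass to the unitary model via the compatible embeddings $\mathfrak{i}\colon G(T)\hookrightarrow U(\mathcal{T})$ and $\iota\colon\mathfrak{B}\hookrightarrow\mathcal{B}$, so that $w$ becomes a $C$-fixed point of $\mathcal{B}$. A point of $\mathcal{B}$ is the same as a maximal $H$-positive complex subspace $W\subset\mathbb{C}^{2n}$, where $H$ is a Hermitian form with $U(\mathcal{T})=U(H)$, of signature $(n,n)$, with $U(\mathcal{T})(\mathbb{R})$ acting by $W\mapsto gW$; thus $w$ is $C$-fixed iff $W$ is stable under the action of $A:=Y\otimes_{\mathbb{Q}}\mathbb{C}$ on $\mathbb{C}^{2n}$ induced by $\mathfrak{i}\circ h$. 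Because $K$ splits $\mathbb{B}$, the map $Y\to M_{n}(\mathbb{B})\to M_{2n}(K)$ extends, by flatness of $K/\mathbb{Q}$ and the identification $M_{n}(\mathbb{B})\otimes_{\mathbb{Q}}K\cong M_{2n}(K)$, to an embedding $Y\otimes_{\mathbb{Q}}K\hookrightarrow M_{2n}(K)$; hence $K^{2n}$ is a faithful $Y\otimes_{\mathbb{Q}}K$-module of $K$-dimension $2n=\dim_{K}(Y\otimes_{\mathbb{Q}}K)$, i.e.\ the regular representation, and $\mathbb{C}^{2n}$ decomposes $A$-equivariantly as $\bigoplus_{\tau}\mathbb{C}_{\tau}$ over the $2n$ embeddings $\tau\colon Y\hookrightarrow\mathbb{C}$. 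Using that $Y^{1}$ spans $Y$ (part (1)) one checks the lines $\mathbb{C}_{\tau}$ are mutually $H$-orthogonal; since $H$ has signature $(n,n)$, exactly $n$ of them are $H$-positive, and the unique $A$-stable maximal $H$-positive subspace is their sum. Hence $\mathcal{B}$ has exactly one $C$-fixed point; as $\mathfrak{B}\subset\mathcal{B}$ contains at least one by existence, it contains exactly one, proving (2).

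The heart of the matter is the uniqueness in (2): identifying $\mathbb{C}^{2n}$ with the regular representation of $Y\otimes_{\mathbb{Q}}\mathbb{C}$ relies on the dimension equality $\dim_{\mathbb{Q}}Y=2n$ together with faithfulness of the \emph{extended} map $Y\otimes_{\mathbb{Q}}K\hookrightarrow M_{2n}(K)$ — which is not a formal consequence of $Y\hookrightarrow M_{2n}(K)$ but follows here from the factorisation through $M_{n}(\mathbb{B})\otimes_{\mathbb{Q}}K\cong M_{2n}(K)$ — and on the positivity condition then selecting a single $C$-stable complex structure. Checking that $\mathfrak{i}$ and $\iota$ are genuinely compatible, and that the produced fixed point lies in $\mathfrak{B}$ and not merely in $\mathcal{B}$, is routine once $\iota$ is known to be equivariant but should be recorded. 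An alternative route to uniqueness is to show that the isotropy representation of $C$ on the holomorphic tangent space $T_{w}\mathcal{B}$ has no nonzero fixed vectors, so that $w$ is an isolated fixed point, and then to conclude by convexity of the $C$-fixed locus for the Bergman metric.
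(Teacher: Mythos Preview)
Your argument is correct. For (1) the paper simply refers to \cite[Lemma 4.12]{Sh00}; your direct treatment via the rational parametrisation of the norm-one conic and the Chinese remainder theorem is a clean self-contained substitute. For (2) the two proofs genuinely diverge. The paper normalises so that the origin $0\in\mathfrak{B}$ is already fixed by $h(Y^{1})$, observes that the stabiliser of $0$ sits inside $U(n,n)$ as $a\mapsto\mathrm{diag}(a,\overline{a})$ with $a\in U(n)$, so that any other fixed point $z\in\mathfrak{B}$ must satisfy $z=az\transpose{a}$; taking $a$ to be the image of the element $\beta$ from (1) and diagonalising then forces $z=0$ by a direct entry-by-entry computation. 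You instead invoke the Cartan fixed-point theorem for existence and, for uniqueness, pass to $\mathcal{B}$ and interpret its points as maximal $H$-positive subspaces of $\mathbb{C}^{2n}$, identify $\mathbb{C}^{2n}$ with the regular representation of $Y\otimes_{\mathbb{Q}}\mathbb{C}$ (your observation that this needs injectivity of $Y\otimes_{\mathbb{Q}}K\to M_{2n}(K)$, which follows from the splitting $M_{n}(\mathbb{B})\otimes_{\mathbb{Q}}K\cong M_{2n}(K)$ and is not a formal consequence of $Y\hookrightarrow M_{2n}(K)$, is a genuine point), and conclude that the unique $A$-stable maximal positive subspace is the span of the $n$ positive eigenlines. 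The paper's route is a quick concrete matrix calculation once the normalisation is granted; your route is more structural, makes the role of the CM decomposition transparent, and sidesteps the bookkeeping of which $n$ among the $2n$ values $\beta_{i},\beta_{i}^{\rho}$ occur as eigenvalues of $a$.
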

 
\begin{proof}
 The first part can be shown exactly as \cite[Lemma 4.12]{Sh00}, and for the second part we adapt an ideal of the proof of that lemma. Without loss of generality we can assume that the origin $0$ of $\mathfrak{B}$ is a fixed point for $h(Y^1)$ and our task is to show that it is the unique fixed point. We note that the maximal compact subgroup in $G(T)(\R)$ fixing the origin is isomorphic to $U(n)$, and hence with respect to the embedding $G(T)(\R) \hookrightarrow U(n,n)$ we have that $U(n) \hookrightarrow U(n) \times U(n)$ diagonally, i.e.
 $ a \mapsto (a, \overline{a})$. In particular we have an emdedding $h(Y^1) \hookrightarrow U(n) \hookrightarrow U(n,n)$. Assume now there is another point $z \in \mathfrak{B}$ which is a fixed point of $h(Y^1)$. Then we must have that $z = a z \overline{a}^{-1}$ for every element $\diag[a, \bar{a}] \in (U(n) \times U(n)) \cap h(Y^1)$. But for such a point we have that $a^* a = 1$ and hence  $\overline{a}^{-1} = \transpose{a}$. That is $ z = a z \transpose{a}$. Since $a \in U(n) \hookrightarrow U(n) \times U(n)$ we may diagonalise it, say with eigenvalues $\lambda_i$, $i =1, \ldots, 2n$  and hence we must have $z_{ij} = 0$ for every $\lambda_i \neq \lambda_j$. Taking $a$ to be the element obtained from $\beta$ above we have that $z$ has to be the origin.  
\end{proof}

We call a point fixed by some $h(Y^1)$ as above a CM point, and we note that this definition does not depend on the choice of the field $K$. For an example, take $T=\mathrm{diag}[\zeta\cdot 1_m,\zeta\cdot 1_r,-\zeta\cdot 1_m]$. This is the group $G'$ in section 2 and we have described its embedding into unitary group and the action on $\mathfrak{B}$ explicitly there. Let $h$ and $Y$ be as above, one easily checks that $0$ is the fixed point of $h(Y^1)$ and thus a CM point.

We now want to attach some CM periods to our CM points. We will do this by relating our definition with the notion of CM points of unitary groups. Indeed, our selection of the field $K$ allow us to view our group as a subgroup of a unitary group, and hence an embedding $\mathfrak{B} \hookrightarrow \mathcal{B}$. Our next aim is to relate the just defined CM points in $\mathfrak{B}$ with the well-studied, as in \cite{Sh00}, CM points of $\mathcal{H}$. It is here that we employ the idea of Shimura which was used in \cite{Sh67} (see also \cite[Section 7]{Sh78}) to study CM points in general Type C domains.

Let $w \in \mathfrak{B}\subset\mathcal{B}$ be a CM point fixed by $h(Y^{1}) \subset G\subset U(n,n)$. Then for a such a point we have that 
 \[
 \Lambda(\alpha,w) p(x,w) = p(x \alpha ,w),\,\,\,\,\,\, \alpha \in h(Y^1),\,\,\,x \in \C^{2n}
 \]
 where $\Lambda(\alpha,w) \in GL_{2n}(\mathbb{C})$ and $p(x,z) : \C^{2n} \times \mathcal{B} \rightarrow \mathbb{C}^{2n}$ are the map defined in \cite[4.7]{Sh00}. In this way we can obtain an embedding $Y \rightarrow End_{\mathbb{C}}(\mathbb{C}^{2n})$ by sending $\alpha\mapsto \Lambda(\alpha, w)$ where we have used the fact that $Y$   is spanned by $Y^1$ over $\mathbb{Q}$. We now extend this to an injection $h$ of $K \otimes_{\mathbb{Q}} Y \cong \mathcal{S}:=\mathcal{S}_1 \times \ldots \times \mathcal{S}_n$ into $End(\mathbb{C}^{2n})$ where $\mathcal{S}_i = K K_i$.  Indeed we set 
 \[
 h(\beta \otimes \alpha) p(x,w) = p(\beta x \alpha, w) = p(x \beta \alpha, w) = p(x \alpha \beta, w).
 \] 
 
 That is, the point $w$ can be seen as a fixed point of $\mathcal{S}^{1} \otimes_{\mathbb{Q}} \mathbb{R}$ where $\mathcal{S}^{1} =\{ s \in \mathcal{S} \,\,\,|\,\,\, s s^{\rho} = 1 \} $ with $\rho$ the involution on $\mathcal{S}$ induced by the complex conjugation on $KK_i$. Hence $w$ is a CM point in $\mathcal{B}$  defined in \cite[4.11]{Sh00} for unitary groups. In particular thanks to \cite[Lemma 4.13]{Sh00} we have that $w$ has algebraic entries as an element of $\mathfrak{B}\subset\mathcal{B}$ with the obvious $\overline{\mathbb{Q}}$-structure of every entry. 

\begin{rem}
Following \cite[Section 4]{Sh00}, let $\Omega=\{K,\Psi,L,\mathcal{T},\{u_i\}_{i=1}^s\}$ be a PEL-type and $\mathcal{F}(\Omega)$ family of polarised abelian varieties of PEL-type. The abelian varieties in $\mathcal{F}(\Omega)$ are parametrised by $\mathcal{B}$. More precisely, there is a bijection
\[
\Gamma\backslash\mathcal{B}\stackrel{\sim}\longrightarrow\mathcal{F}(\Omega), \Gamma=\{\gamma\in U(\mathcal{T}):L\gamma=L,u_i\gamma-u_i\in L\}.
\]
As in \cite{Sh63}, we can define $\Omega'=\{\mathbb{B},\Psi',L,T,\{u_i\}_{i=1}^s\}$ for quaternions and $\mathcal{F}(\Omega')$ are parametrized by $\mathfrak{B}$. The natural inclusion $\mathcal{F}(\Omega')\to\mathcal{F}(\Omega)$ is compatible with $\mathfrak{B}\to\mathcal{B}$. Moreover, similar to \cite{G84,Sh67} we actually have an embedding of canonical models between $\Gamma\backslash\mathfrak{B}$ and $\Gamma'\backslash\mathcal{B}$ for certain congruence subgroups $\Gamma,\Gamma'$. 
\end{rem}

As we have remarked, CM points for unitary groups have been extensively studied in \cite[Chapter II]{Sh00}. We recall some of their properties. For  $\alpha \in \mathcal{S}^1$ we put $\psi(\alpha) := \lambda(h(\alpha),w) \in GL_n(\mathbb{C})$, $\phi(\alpha):= \mu(h(\alpha),w) \in GL_n(\mathbb{C})$, and $\Phi(\alpha) = \diag[ \psi(\alpha), \phi(\alpha)] \in GL_{2n}(\mathbb{C})$. We can then find $B,C \in GL_{n}(\overline{\mathbb{Q}})$ (see \cite[page 78]{Sh00}) such that
 \[
 B \psi(\alpha) B^{-1} = \diag[\psi_1(\alpha), \ldots, \psi_n(\alpha)],\,\,\,\,\,C \phi(\alpha) C^{-1} = \diag[\phi_1(\alpha), \ldots, \phi_n(\alpha)],\,\,\,\,\,\,\forall \alpha \in \mathcal{S},
 \]  
 for some ring homomorphism $\phi_i, \psi_i : \mathcal{S} \rightarrow \mathbb{C}$, where we have $\mathbb{Q}$-linearly extended $\psi$ and $\phi$,  from $\mathcal{S}^1$ to $\mathcal{S}$.
 We set 
 \[
 \mathfrak{p}_{\infty} (w) := C^{-1} \diag[p_{\mathcal{S}}(\phi_1, \Phi) , \ldots, p_{\mathcal{S}}(\phi_n,\Phi)] C \in GL_n(\mathbb{C}),
\]
\[
\mathfrak{p}_{\infty \rho} (w) := B^{-1} \diag[p_{\mathcal{S}}(\psi_1, \Phi) , \ldots, p_{\mathcal{S}}(\psi_n,\Phi)] B \in GL_n(\mathbb{C}),
 \]
 where the CM-periods $p_{\mathcal{S}}(\psi_i,\Phi) \in \mathbb{C}^{\times}$ and $p_{\mathcal{S}}(\phi_i,\Phi) \in \mathbb{C}^{\times}$ are defined as in \cite[page 78]{Sh00}.  Actually we should remark here that the periods $p_{\mathcal{S}}(\psi_i,\Phi)$, $p_{\mathcal{S}}(\phi_i,\Phi)$ are uniquely determined up to elements in $\overline{\mathbb{Q}}^{\times}$, but this is sufficient for our applications.
 
We now use the fact that $w\in\mathfrak{B}\subset\mathcal{B}$ is a CM point for both $(Y,h)$ and also for $(\mathcal{S},h)$. Note that $\psi(\alpha) = \phi(\alpha)$ for $\alpha \in Y^1  \subset \mathcal{S}^1$. Indeed, for $\alpha\in G(\R)$ we have that (see \cite[eq. 2.18.9]{Sh67}), $\left[ \begin{matrix} a &  b\\ c& d \end{matrix} \right] = \left[ \begin{matrix} \overline{d} &  -\overline{c}\\ -\overline{b}& \overline{a} \end{matrix} \right]$ and hence in particular we have that $\lambda(\alpha,z) = \mu(\alpha,z)$ since $\transpose{z} = - z$. In particular the values $\psi(\alpha) = \phi(\alpha) = \lambda(\alpha,w) = \mu(\alpha,w)$ for $\alpha \in Y^1$, that is the restrictions of $\phi$ and $\psi$ to $Y^1$ are independent of the choice of the field $K$. Furthermore we note that $\psi(\alpha) = \phi(\alpha)$ for all $\alpha \in K$ with $\alpha \overline{\alpha} =1$ seen as elements of $U(n,n)$ i.e. $\alpha 1_{2n} \in U(n,n)$.

In the following lemma we use the notation $I_Y, J_Y, J_{\mathcal{S}_j}$ as defined in \cite[page 77]{Sh00}.

 \begin{lem}
 With notation as above, for all $1 \leq i \leq n$, we have that
 \[
 p_{\mathcal{S}}(\psi_i,\Phi)  = p_Y(\text{Res}_{\mathcal{S}/Y}(\psi_i), \Phi') =p_Y(\text{Res}_{\mathcal{S}/Y}(\phi_i), \Phi')= p_{\mathcal{S}}(\phi_i,\Phi) ,
 \]
 where $\Phi' = \text{Res}_{\mathcal{S}/Y} \phi = \text{Res}_{\mathcal{S}/Y} \psi \in I_Y$. 
 \end{lem}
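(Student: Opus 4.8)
The plan is to exploit the compatibility of Shimura's CM-period symbols $p_{\mathcal{S}}$ and $p_Y$ under restriction of scalars, as developed in \cite[Section 4]{Sh00}, together with the equality $\psi|_{Y^1} = \phi|_{Y^1}$ already noted above. First I would observe that the middle equality $p_Y(\text{Res}_{\mathcal{S}/Y}(\psi_i), \Phi') = p_Y(\text{Res}_{\mathcal{S}/Y}(\phi_i), \Phi')$ is almost immediate: since $w$ is a fixed point of $h(Y^1)$ and the restrictions of $\psi$ and $\phi$ to $Y^1$ coincide (and, as remarked, also agree on the diagonally embedded $\{\alpha \in K : \alpha\overline\alpha = 1\}$), the two characters $\text{Res}_{\mathcal{S}/Y}(\psi_i)$ and $\text{Res}_{\mathcal{S}/Y}(\phi_i)$ of $Y$ are in fact equal as elements of $I_Y$; hence the period symbols attached to them are literally the same. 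So the content is in the two outer equalities, which relate a period over the larger CM algebra $\mathcal{S} = K \otimes_{\mathbb{Q}} Y$ to one over $Y$.

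For the outer equalities I would invoke the functoriality of Shimura's period symbol under restriction of scalars, namely the relation $p_{\mathcal{S}}(\xi, \Phi) \sim p_Y(\text{Res}_{\mathcal{S}/Y} \xi, \text{Res}_{\mathcal{S}/Y} \Phi)$ up to $\overline{\mathbb{Q}}^{\times}$, which holds whenever the abelian variety of CM-type $(\mathcal{S}, \Phi)$ is isogenous (over $\overline{\mathbb{Q}}$) to one obtained by base change / Weil restriction from $(Y, \Phi')$. In our situation this isogeny is exactly what is furnished by the construction preceding the lemma: the CM point $w \in \mathfrak{B} \subset \mathcal{B}$ underlies, on the quaternionic side, a polarised abelian variety in the family $\mathcal{F}(\Omega')$ of \cite{Sh63}, and on the unitary side the corresponding member of $\mathcal{F}(\Omega)$; the natural inclusion $\mathcal{F}(\Omega') \to \mathcal{F}(\Omega)$ compatible with $\mathfrak{B} \to \mathcal{B}$ identifies the unitary abelian variety as (isogenous to) $\mathbb{B} \otimes_{\mathbb{Q}} (\text{the }Y\text{-abelian variety})$, i.e. the $\mathcal{S}$-CM type $\Phi$ is induced from the $Y$-CM type $\Phi'$ by tensoring with $K$. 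I would then quote \cite[Lemma 4.14]{Sh00} (or the relevant period-relation lemma there; in any case the lemma in \cite{Sh00} that expresses $p_{\mathcal{S}}$ of an induced CM type in terms of $p_Y$) to conclude $p_{\mathcal{S}}(\psi_i, \Phi) \sim p_Y(\text{Res}_{\mathcal{S}/Y}(\psi_i), \Phi')$ and likewise for $\phi_i$.

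I would then assemble the chain: the left equality is the $\psi$-version of this functoriality, the right equality is the $\phi$-version, and the middle equality is the elementary observation above. Since every identity is up to $\overline{\mathbb{Q}}^{\times}$ — which, as remarked just before the lemma, is exactly the ambiguity already present in the definition of the periods $p_{\mathcal{S}}(\psi_i,\Phi)$, $p_{\mathcal{S}}(\phi_i,\Phi)$ — this suffices.

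I expect the main obstacle to be bookkeeping rather than conceptual: one must check carefully that the CM type $\Phi$ on $\mathcal{S}$ attached to $w$ really is the restriction-of-scalars-induced type from $(Y, \Phi')$ — i.e. that the homomorphisms $\phi_i, \psi_i : \mathcal{S} \to \mathbb{C}$ diagonalising $\mu(h(\alpha), w), \lambda(h(\alpha), w)$ are precisely the extensions to $\mathcal{S} = KK_1 \times \cdots \times KK_n$ of the characters of $Y$ diagonalising the quaternionic automorphy factors, with the correct pairing of the $K$-components. This is where the explicit description of the embedding $M_n(\mathbb{B}) \to M_{2n}(K)$, the factorisation $\left[\begin{matrix} a & b \\ c & d \end{matrix}\right] = \left[\begin{matrix} \overline d & -\overline c \\ -\overline b & \overline a \end{matrix}\right]$ for $\alpha \in G(\mathbb{R})$, and the resulting identity $\lambda(\alpha, z) = \mu(\alpha, z)$ on $\mathfrak{B}$ (all recorded above) must be fed into Shimura's formalism; once that identification is in place the period relation is a direct citation.
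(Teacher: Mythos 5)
Your overall strategy is the same as the paper's: reduce the lemma to Shimura's inflation--restriction property of the CM period symbol, after identifying the CM type $\Phi$ on $\mathcal{S}$ as the one inflated from $\Phi'$ on $Y$, and your treatment of the middle equality (from $\psi|_{Y}=\phi|_{Y}$, so that $\text{Res}_{\mathcal{S}/Y}(\psi_i)=\text{Res}_{\mathcal{S}/Y}(\phi_i)$) agrees with the paper's. The genuine gap is that the identification $\Phi_j=\text{Inf}_{\mathcal{S}_j/K_j}(\Phi'_j)$ is not ``bookkeeping'' to be deferred: it is essentially the entire content of the paper's proof, and the route you sketch for it --- an isogeny/Weil-restriction picture via the inclusion $\mathcal{F}(\Omega')\to\mathcal{F}(\Omega)$ --- is never carried out and is not needed. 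Also, the functoriality you quote is stated imprecisely: the valid relation is $p_{\mathcal{S}}(\xi,\text{Inf}_{\mathcal{S}/Y}\Phi')=p_Y(\text{Res}_{\mathcal{S}/Y}\xi,\Phi')$ up to $\overline{\Q}^{\times}$, with $\Phi'$ (not $\text{Res}_{\mathcal{S}/Y}\Phi$, which equals $2\Phi'$) in the second slot; so the hypothesis one must verify is precisely the inflation identity.

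The missing verification is short but requires an input that is absent from --- indeed contradicted by --- your sketch. For $\alpha\,1_{2n}\in U(n,n)$ one has $\lambda(\alpha 1_{2n},w)=\overline{\alpha}\,1_n$ while $\mu(\alpha 1_{2n},w)=\alpha\,1_n$, so the restrictions of $\psi$ and $\phi$ to $K$ are \emph{complex conjugate}, $\psi=\overline{\phi}$ on $K$, not equal as your parenthetical suggests; if $\psi$ equalled $\phi$ on both $K$ and $Y$, then $\Phi=\phi+\psi=2\phi$ could not be a CM type at all, so your proposed identification could not go through as stated. The paper's argument is: write $\Phi=\phi+\psi$, use $\psi=\phi$ on $Y$ and $\psi=\overline{\phi}$ on $K$ (together with the fact that $\sum_j\text{Res}_{\mathcal{S}_j/K}\Phi_j$ lies in $I_K$) to conclude $\Phi_j=\Phi'_j\otimes\tau+\Phi'_j\otimes\overline{\tau}=\text{Inf}_{\mathcal{S}_j/K_j}(\Phi'_j)$, where $\tau$ is a fixed embedding of $K$. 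Once this combinatorial identity of CM types is in place, the factorwise inflation--restriction relations $p_{\mathcal{S}_j}(\psi_{ij},\Phi_j)=p_{K_j}(\text{Res}_{\mathcal{S}_j/K_j}(\psi_{ij}),\Phi'_j)$ (the properties on p.~84 of Sh00, which is the citation doing the work rather than a lemma about isogenous abelian varieties) give both outer equalities exactly as you intend, up to the $\overline{\Q}^{\times}$ ambiguity already built into the periods.
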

 \begin{proof}
 Let us write  $\Phi = \sum_{j=1}^{n} \Phi_j$ with $\Phi_j \in I_{\mathcal{S}_j}$ and $\Phi' = \sum_{j=1}^n \Phi_j'$, with $\Phi_j' \in I_{K_j}$. Then we  
 have that $\Phi_j = \text{Inf}_{\mathcal{S}_j/K_j} (\Phi'_j)$. Indeed first we observe that $\Psi = \sum_{j=1}^{n} \text{Res}_{\mathcal{S}_j/K} \Phi_j \in I_K$ (see \cite[page 85]{Sh00}), where $\Psi$ as in the Remark 5.2 above. Moreover we know that $\Phi = \phi + \psi$ with $\phi,\psi \in I_{\mathcal{S}}$ as above and
we have seen that $\psi = \overline{\phi}$ when restricted to $K$ via $K \hookrightarrow Y \otimes_{\mathbb{Q}} K = \mathcal{S}$. But on the other hand we have seen that $\psi = \phi$ when restricted to $Y$, from which we obtain that $\Phi_j = \Phi_j' \otimes \tau + \Phi_j' \otimes \overline{\tau}$,  where $\tau$ a fixed embedding of $K \hookrightarrow \mathbb{C}$ ( i.e. a CM type for $K$). Since $\mathcal{S}_j = K_j \otimes_{\mathbb{Q}} K$ the claim that $\Phi_j = \text{Inf}_{\mathcal{S}_j/K_j} (\Phi'_j)$ now follows.
 
 The statement of the Lemma is now obtained from the inflation-restriction properties of the periods (see \cite[page 84]{Sh00}):
 \[
 p_{\mathcal{S}}(\psi_i,\Phi)  = \prod_{j=1}^n p_{\mathcal{S}_j}(\psi_{ij},\Phi_j) = \prod_{j=1}^n p_{K_j}(\text{Res}_{\mathcal{S}_j/K_j}(\psi_{ij}), \Phi'_j)=p_Y(\text{Res}_{\mathcal{S}/Y}(\psi_i), \Phi'),
 \] 
where $\psi_{ij} \in J_{\mathcal{S}_j}$ induced by $\psi_i \in J_{\mathcal{S}} = \bigcup_{j=1}^n J_{\mathcal{S}_j}$. Similarly follows also the other equality.
 \end{proof}

 The above lemma shows that we have $\mathfrak{p}_{\infty} (w)  = \mathfrak{p}_{\infty \rho} (w)$ for $w \in \mathfrak{B}$ and they are independent of the choice of the imaginary quadratic field $K$ we chose above (and hence of the embedding to the unitary group). We then simply define $\mathfrak{p}(w)=\mathfrak{p}_{\infty}(w)=\mathfrak{p}_{\infty\rho}(w)$ for the period attached to CM point $w\in\mathfrak{B}$. By \cite[Proposition 11.5]{Sh00} and the definition of periods we immediately have
 \begin{enumerate}
 \item The coset $\mathfrak{p}(w) GL_n(\overline{\mathbb{Q}})$ is determined by the point $w \in \mathfrak{B}$ independently of the embedding $(Y,h)$ chosen above,
 \item $\mathfrak{p} (\gamma w) GL_n(\overline{\mathbb{Q}}) = \lambda(\gamma, w) \mathfrak{p}(w) GL_n(\overline{\mathbb{Q}})$ for all $\gamma \in G(\Q)$.
 \end{enumerate}

\begin{rem}
 Even though the definition of a CM point in $\mathfrak{B}$ given above is enough for our applications, we mention here that there is a more general definition as follows. We may take $Y$ above as $Y = M_{n_1}(K_1) \times \ldots \times M_{n_s}(K_s)$ with $K_i$ CM fields and the condition that $n = \sum_{i=1}^s n_i [K_i : \mathbb{Q}]$ and assume there exists an embedding $h : Y^1 \rightarrow G(T)$ where $Y^1 := \{ y \in Y \,\,\,|\,\,\,y y^\rho = 1\}$ with the involution on $Y$ induced by complex conjugation and transpose. Then one can show as above that $h(Y^1 \otimes_{\mathbb{Q}} \mathbb{R})$ has a uniques fixed point $w \in \mathfrak{B}$. Picking as before an imaginary quadratic field $K$ disjoint from all $K_i$ we can see that the point $w \in \mathfrak{B} \hookrightarrow \mathcal{B}$ corresponds to an abelian variety $A_w$ with endomorhism ring equal to $Y \otimes_{\mathbb{Q}} K$. In particular we have that $A_w$ is isogenous to $A_1^{n_1} \times \ldots \times A_s^{n_s}$ where the abelian variety $A_i$ has CM by the field $\mathcal{S}_i:= K K_i$.
 \end{rem}

\subsection{Algebraic modular forms}

We keep the notation from before.  In particular we write $G$ for $G(T)$ and we have an embedding $\mathfrak{i}:G \to U(\mathcal{T})$ as above. For the following considerations we need to augment our definition of modular forms from scalar valued to vector valued.

We start with a $\overline{\Q}$-rational representation $\omega:\GL_n(\C)\to\GL(V)$. Given a function $f:\mathfrak{B}\to V$ and $g\in G$ define $(f|_{\omega}g)(z)=\omega(\lambda(g,z))^{-1}f(gz)$. For a congruence subgroup $\Gamma$, the space of modular forms $M_{\omega}(\Gamma)$ consists of holomorphic function with the property $f|_{\omega}\gamma=f$ for all $\gamma\in\Gamma$. Put $M_{\omega}=\bigcup M_{\omega}(\Gamma)$ where the union is over all congruence subgroups, and
\[
\mathfrak{A}_{\omega}=\bigcup_e\{g^{-1}f:f\in M_{\tau_e},0\neq g\in M_e\},
\]
\[
\mathfrak{A}_{\omega}(\Gamma)=\{h\in\mathfrak{A}_{\omega}:h|_{\omega}\gamma=h\text{ for }\gamma\in\Gamma\},
\]
where $e$ runs over $\Z$ and $\tau_e$ denotes the representation defined by $\tau_e(x)=\det(x)^e\omega(x)$.

\begin{defn}
Let $\mathcal{W}$ be a set of CM points which is dense in $\mathfrak{B}$. Put $\mathfrak{P}_{\omega}(w)=\omega(\mathfrak{p}(w))$ for $w\in\mathcal{W}$.\\
(1) An element $f\in\mathfrak{A}_{\omega}$ is called algebraic, denoted as $f\in\mathfrak{A}_{\omega}(\overline{\Q})$, if $\mathfrak{P}_{\omega}(w)^{-1}f(w)$ is $\overline{\Q}$-rational for every $w\in\mathcal{W}$ where $f$ is finite.\\
(2) We set $M_{\omega}(\overline{\Q}):=M_{\omega}\cap\mathfrak{A}_{\omega}(\overline{\Q})$, and $M_{\omega}(\Gamma,\overline{\Q}):=M_{\omega}(\Gamma)\cap M_{\omega}(\overline{\Q})$.
\end{defn}

We can compare the definition for our group with the unitary group. Let $\omega:\mathrm{GL}_n(\C)\times\mathrm{GL}_n(\C)\to\mathrm{GL}(V)$ in be $\overline{\Q}$-rational representation. Denote $\mathcal{A}_{\omega},\mathcal{A}_{\omega}(\Gamma)$ for modular function spaces for unitary group as in \cite[5.3]{Sh00}. The composition of $\omega$ with the diagonal embedding $\mathrm{GL}_n\to\mathrm{GL}_n\times\mathrm{GL}_n$ gives a representation $\omega:\mathrm{GL}_n(\C)\to\mathrm{GL}(V)$. Clearly if $f\in\mathcal{A}_{\omega}$ then its pullback $f\circ\iota\in\mathfrak{A}_{\omega}$ is a quaternionic modular form. Moreover, $f\in\mathcal{A}_{\omega}(\overline{\Q})$ and $f\circ\iota$ is finite then the pullback $f\circ\iota\in\mathfrak{A}_{\omega}(\overline{\Q})$. 

Even though we have provided a definition of algebraicity for modular forms on the bounded domain $\mathfrak{B}$ we can transfer it also to the other realisation of the symmetric spaces discussed in section 2. Indeed, with the notation of section 2.2 suppose we are given two of these realisations $(\mathfrak{i}_1,\Phi_1,H_1,K_1,\mathcal{G}_1)$ and $(\mathfrak{i}_2,\Phi_2,H_2,K_2,\mathcal{G}_2)$, with $\mathfrak{i}_1=\mathfrak{i}_2$ both induced from an algebraic embedding $M_n(\mathbb{B})\to M_{2n}(K)$, where $K$ is an imaginary quadratic field which splits $\mathbb{B}$. In particular the matrix $R$ in Equation (\ref{The matrix R}) has algebraic entries and hence we obtain that the bijective map $\rho: \mathcal{H}_1\to\mathcal{H}_2$ as defined there is algebraic, in the sense that maps a algebraic points of $\mathcal{H}_1$ to algebraic points of $\mathcal{H}_2$. We also conclude from this that $\mu(z)$, as defined in the same equation is algebraic if $z$ is. In particular given any realisation $\mathcal{H}$ there is a bijection $\rho : \mathcal{H} \rightarrow \mathfrak{B} $. We define the CM points on $\mathcal{H}$ to be the inverse image with respect to $\rho$ of the CM points of $\mathfrak{B}$.

As we have discussed every vector valued modular form $g : \mathcal{H} \rightarrow V$,  corresponds uniquely to a modular form $f:\mathfrak{B} \to V$  by the rule $g(z)=\omega(\mu(z))^{-1}f(\rho(z))$. So it is enough to now observe that if $w$ is a CM point of $\mathcal{H}$, which by definition means $\rho(w)$ is a CM point of $\mathfrak{B}$, and we have established above $\mu(w) \in GL_n(\overline{\mathbb{Q}})$. Hence we can use the same periods  $\mathfrak{P}_{\omega}(w)$ for both $f$ and $g$.

In particular, the algebraicity as defined for bounded domains can be transferred to the unbounded domain $\mathfrak{Z}$. Let now $f:\mathfrak{Z}\to\C$ be a weight $k$ modular form defined in section 3, we can take its Fourier-Jacobi expansion. We denote the Fourier-Jacobi coefficients by $c(\tau,f;v,w)$. When $r=0,n=2m$, we simply denote it by $c(\tau,f)$.
 
\begin{prop}
(1) For congruence subgroup $\Gamma$ we have $M_{k}(\Gamma)=M_{k}(\Gamma,\overline{\Q})\otimes_{\overline{\Q}}\C$.\\
(2) For every $f\in M_k$ and $\sigma\in\mathrm{Aut}(\C/\overline{\Q})$ we have $c(\tau,f^{\sigma};0,w)=c(\tau,f;0,w)$, for all $\tau$ and $w$.\\
(3) Let $r=0,n=2m$, (the Tube Domain case) and $f\in M_k$ then $f\in M_k(\overline{\Q})$ if and only if $c(\tau,f)\in\overline{\Q}$ for all $\tau$;\\
(4) For congruence subgroup $\Gamma$ and $\sigma\in\mathrm{Aut}(\C/\overline{\Q})$ we have $S_k(\Gamma)^{\sigma}=S_k(\Gamma)$ and $S_k(\Gamma)=S_k(\Gamma,\overline{\Q})\otimes_{\overline{\Q}}\C$.
\end{prop}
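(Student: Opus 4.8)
The four statements are the standard rationality package, and the strategy is to bootstrap everything from the unitary‑group case treated in \cite[Chapter II, §§11--12]{Sh00} via the embedding $\mathfrak{i}:G(T)\hookrightarrow U(\mathcal{T})$ and the pullback $\iota:\mathfrak{B}\hookrightarrow\mathcal{B}$, together with the density of CM points and a Galois‑action argument. Throughout, the key structural input is the one we have already established: the matrix $R$ (hence $\rho$ and $\mu$) is algebraic, CM points of $\mathfrak{B}$ are the restrictions of CM points of $\mathcal{B}$, and the periods $\mathfrak{p}(w)$ for $w\in\mathfrak{B}$ agree with (a $\overline{\Q}^\times$‑coset of) the unitary periods $\mathfrak{p}_\infty(w)=\mathfrak{p}_{\infty\rho}(w)$ of \cite{Sh00}. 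I will also use freely that $M_k$ and $S_k$ are finite‑dimensional over $\C$.

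\textbf{Step 1 (Galois action on $\mathfrak{A}_\omega$ and part (2)).} First I would record, exactly as in \cite[§5.3]{Sh00}, that $\mathrm{Aut}(\C/\overline{\Q})$ acts on $\mathfrak{A}_\omega$ by acting on values at the dense set $\mathcal{W}$ of CM points, normalised by the periods: $(f^\sigma)(w):=\mathfrak{P}_\omega(w)\,\big(\mathfrak{P}_\omega(w)^{-1}f(w)\big)^\sigma$, and that this is well defined because the Zariski‑density of $\mathcal{W}$ in $\mathfrak{B}$ forces a holomorphic function to be determined by its values there (continuity and holomorphy). With this in hand, part (2) is immediate for $\sigma\in\mathrm{Aut}(\C/\overline{\Q})$: the Fourier--Jacobi coefficients $c(\tau,f;0,w)$ at the origin (which is itself a CM point, by the example $T=\diag[\zeta 1_m,\zeta 1_r,-\zeta 1_m]$, where $0\in\mathfrak{B}$ is CM) can be extracted by a limiting/Cauchy‑integral procedure from the values of $f$ along the CM orbit through the origin, and $\sigma$ fixes the period $\mathfrak{p}(0)\in GL_n(\overline{\Q})$; so the coefficients at $(0,w)$ are $\sigma$‑invariant. (One pushes this from $\mathfrak{B}$ to $\mathfrak{Z}$ using that $\rho$ and $\mu$ are algebraic, so the weight‑$k$ automorphy‑factor twist $\det(\mu(z))^{-k}$ introduces only algebraic factors at CM points.)

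\textbf{Step 2 (parts (1) and (4), the $\C$‑structure statements).} Given Step 1, the argument is the one in \cite[§12]{Sh00}: $M_k(\Gamma)$ is finite‑dimensional and stable under $\mathrm{Aut}(\C/\overline{\Q})$ (a holomorphic $f$ with $f|_k\gamma=f$ has $f^\sigma|_k\gamma=f^\sigma$ since $\gamma$ has algebraic entries, so $\lambda(\gamma,w)$ and $j(\gamma,w)$ are algebraic at CM $w$); hence by Galois descent $M_k(\Gamma)=M_k(\Gamma,\overline{\Q})\otimes_{\overline{\Q}}\C$, where $M_k(\Gamma,\overline{\Q})$ is by definition the subspace of $\sigma$‑fixed forms, which coincides with $M_k(\Gamma)\cap\mathfrak{A}_\omega(\overline{\Q})$ because an element of a finite‑dimensional $\C$‑space is $\overline{\Q}$‑rational iff it is fixed by all of $\mathrm{Aut}(\C/\overline{\Q})$. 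For (4) one needs in addition that the cuspidality condition is $\mathrm{Aut}(\C/\overline{\Q})$‑stable: this follows since, in the Fourier--Jacobi (or in the non‑tube case, the appropriate) expansion, cuspidality is the vanishing of coefficients indexed by degenerate $\tau$, and by the invariance established in Step 1 (applied at all cusps $t_j$, each of which is again handled by an algebraic change of variable) $\sigma$ permutes these coefficients trivially; hence $S_k(\Gamma)^\sigma=S_k(\Gamma)$ and the descent $S_k(\Gamma)=S_k(\Gamma,\overline{\Q})\otimes_{\overline{\Q}}\C$ follows the same way.

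\textbf{Step 3 (part (3), the tube‑domain case).} Here $r=0$, $n=2m$, so $\mathfrak{Z}$ is a genuine tube domain, the Fourier--Jacobi expansion is an honest Fourier expansion $f(z)=\sum_{\tau\in S_+}c(\tau,f)e(\lambda(\mathfrak{i}(\tau)z))$, and I claim $f\in M_k(\overline{\Q})\iff c(\tau,f)\in\overline{\Q}$ for all $\tau$. One direction: if all $c(\tau,f)\in\overline{\Q}$ then for every $\sigma$, $c(\tau,f^\sigma)=c(\tau,f)$ by Step 1, so $f^\sigma=f$ (Fourier coefficients determine the form), i.e.\ $f$ is $\sigma$‑fixed hence in $M_k(\overline{\Q})$. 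Conversely, if $f\in M_k(\overline{\Q})$ then for every $\sigma$ we have $f^\sigma=f$, so $c(\tau,f)=c(\tau,f^\sigma)=\sigma(c(\tau,f))$ — wait, this needs the compatibility $c(\tau,f^\sigma)=\sigma(c(\tau,f))$, which is the \emph{naive} Galois action on $q$‑expansions; so the real content is that on the tube domain the CM‑normalised Galois action of Step 1 coincides with the $q$‑expansion Galois action. This is exactly \cite[Theorem 10.2 / Proposition 11.x]{Sh00} (the ``$q$‑expansion principle'' for these domains): one proves it by exhibiting enough modular forms with algebraic Fourier coefficients whose values at a fixed CM point pin down the period coset $\mathfrak{p}(w)GL_n(\overline{\Q})$ — e.g.\ theta series or the holomorphic Siegel‑type Eisenstein series $\mathbf{E}_l^\ast(g,l)$ of Proposition 4.x, whose Fourier coefficients were shown above to be algebraic — and then invokes the fact that such forms separate points, so a form fixed by the naive action is fixed by the CM‑action and conversely.

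\textbf{Main obstacle.} The routine parts are the Galois‑descent bookkeeping in Steps 1--2; the genuinely substantive point is Step 3, i.e.\ reconciling the two a priori different $\overline{\Q}$‑structures on the tube domain — the one coming from Fourier coefficients and the one coming from CM values normalised by $\mathfrak{p}(w)$. The cleanest route, and the one I would take, is to cite the corresponding result of Shimura for the ambient unitary/symplectic situation and transport it along $\iota:\mathfrak{B}\hookrightarrow\mathcal{B}$: since (tube case) the pullback of a Siegel Eisenstein series with algebraic Fourier coefficients again has algebraic Fourier coefficients and the period coset $\mathfrak{p}(w)GL_n(\overline{\Q})$ of a CM point of $\mathfrak{B}$ equals that of its image in $\mathcal{B}$ by the lemmata above, the equivalence descends. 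One must be slightly careful that enough CM points of $\mathcal{B}$ actually lie on $\mathfrak{B}$ and are dense there — but that is guaranteed by the first lemma of §5.1 (existence of $\beta\in Y^1$ with $\Q[\beta]=Y$, giving many embeddings $h$) together with the density statement built into the definition of $\mathcal{W}$.
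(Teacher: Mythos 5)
Your outline for (1), (2), (4) follows the same route the paper takes (it simply cites \cite{Sh00}, \cite{M}, \cite{G84} for these), but note that your justification of (2) is not yet a proof: extracting Fourier--Jacobi coefficients ``by a Cauchy-integral procedure along the CM orbit through the origin'' ignores that the Galois action you defined in Step 1 is on CM values \emph{normalized by the transcendental periods} $\mathfrak{P}_\omega(w)$, and relating these normalized values to Fourier(--Jacobi) coefficients is precisely the nontrivial content of the arithmetic theory (canonical models and arithmetic automorphic functions) that has to be invoked, not a consequence of holomorphy plus density of CM points.

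The genuine gap is in Step 3. You correctly identify the obstacle (reconciling the Fourier-coefficient $\overline{\Q}$-structure with the CM-value one), but your proposed resolution --- transport the unitary-case equivalence along $\iota:\mathfrak{B}\hookrightarrow\mathcal{B}$ because pullbacks of Eisenstein series have algebraic coefficients and the periods match, ``so the equivalence descends'' --- does not work as stated: a general quaternionic modular form on $\mathfrak{B}$ is \emph{not} the restriction of a unitary modular form on $\mathcal{B}$, so the equivalence upstairs gives no direct information about $f$, and ``enough forms separating points'' by itself does not transfer a rationality criterion valid for all of $M_k$. The paper's actual mechanism is different and supplies exactly the missing bridge: it takes the pullback $U$ to $\mathfrak{Z}\hookrightarrow\mathcal{H}$ of Shimura's arithmetic matrix-valued function $R$ of \cite[Proposition 9.11]{Sh00}, which is holomorphic, lies in $\mathfrak{A}_k(\overline{\Q})$, and satisfies $U(w)^{-1}\mathfrak{p}(w)\in\GL_n(\overline{\Q})$ at CM points $w$; setting $g:=\det(U)^{-k}f$ one gets a weight-zero modular function with
\[
g(w)=\det\left(U(w)^{-1}\mathfrak{p}(w)\right)^{k}\det(\mathfrak{p}(w))^{-k}f(w),
\]
so that algebraicity of $\det(\mathfrak{p}(w))^{-k}f(w)$ at CM points is equivalent to algebraicity of $g(w)$. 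One then uses the identification of $\mathfrak{A}_0(\Gamma)$ with the function field of the canonical model of $\Gamma\backslash\mathfrak{Z}$ over $\overline{\Q}$ (the adaptation of \cite[Sections 6,7]{Sh00}) to characterize $g\in\mathfrak{A}_0(\overline{\Q})$ by algebraicity of its Fourier coefficients, and translates back to $f$ via multiplication by $\det(U)^{\pm k}$. Without some such single arithmetic form of the correct weight trivializing the period at CM points, and without the weight-zero function-field characterization on the quaternionic domain itself, your Step 3 does not close.
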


\begin{proof}
This can be proved similarly as \cite[Proposition 11.11, 11.15, 26.8]{Sh00}. See also \cite[Proposition 7.2]{M} for (1) and \cite{G84} for (2),(3). 

We briefly explain the proof for (3). Let $r=0,n=2m$ and $f\in M_k(\Gamma)$ for a congruence subgroup $\Gamma$. Let $V$ be the model of $\Gamma\backslash\mathfrak{Z}$ defined over $\overline{\Q}$ then $\mathfrak{A}_0(\Gamma)$ can be identified with the function field of $V$. By the same method in \cite[Sections 6,7]{Sh00}, one can show that $g\in\mathfrak{A}_0(\Gamma)$ if and only if $g$ has algebraic Fourier coefficients. We will reduce our problem for $f\in M_k$ to $\mathfrak{A}_0$ similarly to what is done in the proof of \cite[Proposition 11.11]{Sh00}.

 Let $\mathcal{W}$ be a dense subset of CM points in $\mathfrak{Z}$. We first assume that $\det(\mathfrak{p}(w))^{-k}f(w) \in \overline{\Q}$ for all $w\in\mathcal{W}$ where $f$ is finite. Note that there exists a function $U\in\mathfrak{A}_k(\overline{\Q})$ on $\mathfrak{Z}$ holomorphic in $w$ with $\det(U)(w)\neq 0$. Indeed, denote $\mathcal{H}$ for the unbounded realization of $\mathcal{B}$ via Cayley transform, we can simply put $U(z)=R(z)$ for $z\in\mathfrak{Z}\hookrightarrow\mathcal{H}$ with $R$ the function in \cite[Proposition 9.11]{Sh00}. We set $g:=\det(U)^{-k}f$, and note that
\[
 g(w) = \det\left(U(w)^{-1} \mathfrak{p}(w)\right)^k \det(\mathfrak{p}(w))^{-k} f(w).
\]
But now we have that $U(w)^{-1}\mathfrak{p}(w)$ is $\overline{\mathbb{Q}}$ -rational since this holds for the function $R$ in unitary case. That is $g(w)$ is $\overline{\mathbb{Q}}$-rational for every CM point $w$ where $g$ is finite thus $g\in\mathfrak{A}_0(\overline{\Q})$. Since $ f = \det(U)^{-k}g$ we obtain that $f$ also has algebraic Fourier expansion.

For the other direction, we keep the same notation. If $f \in M_k$ has algebraic Fourier expansion then $g \in \mathfrak{A}_{0}(\overline{\mathbb{Q}})$. For every CM point $w$ we may choose the function $U$ above such that $U$ is finite at $w$ and $U(w)$ is invertible. If $f$ is finite at $w$ then so is $g$ and $g(w)$ is $\overline{\mathbb{Q}}$ rational. The equality of $g(w)$ as above then shows that $\det(\mathfrak{p}(w))^{-k} f(w)$ is $\overline{\mathbb{Q}}$-rational, and hence $f\in M_k(\overline{\Q})$. 
 \end{proof}
 
We end this subsection by giving a definition for adelic modular forms. Let $\mathbf{f}\in\mathcal{M}_k$ be a weight $k$ (adelic) modular form. We say that $\mathbf{f}$ is algebraic, denoted as $\mathbf{f}\in\mathcal{M}_k(\overline{\Q})$ if for a dense subset $\mathcal{W}$ of CM points in $\mathfrak{Z}$,
$\mathfrak{P}_k(w)^{-1}\mathbf{f}(g_{\mathbf{h}}g)\in\overline{\Q}$ for all $w\in\mathcal{W}$. Since $j(g,w)\in\overline{\Q}$ for CM point $w$, this is the same as all component $f_j$ under correspondence $\mathbf{f}\leftrightarrow(f_0,...,f_h)$ are algebraic. 

\begin{prop}
(1) Let $K=K_1(\mathfrak{n})$ or $K_0(\mathfrak{n})$ then $\mathcal{M}_{k}(K)=\mathcal{M}_k(K,\overline{\Q})\otimes_{\overline{\Q}}\C,\mathcal{S}_{k}(K)=\mathcal{S}_k(K,\overline{\Q})\otimes_{\overline{\Q}}\C$.\\
(2) Let $r=0,n=2m$ and $\mathbf{f}\in\mathcal{M}_k$, then $\mathbf{f}\in\mathcal{M}_k(\overline{\Q})$ if and only if the Fourier coefficients $c(\tau,q,\mathbf{f})\in\overline{\Q}$. 
\end{prop}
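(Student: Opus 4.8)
The plan is to deduce both statements from their classical counterparts established in the Proposition above concerning $M_k(\Gamma)$ and $S_k(\Gamma)$, via the correspondences $\mathcal{M}_k(K_1(\mathfrak{n}))\cong\bigoplus_j M_k(\Gamma_1^j(N))$, $\mathcal{S}_k(K_1(\mathfrak{n}))\cong\bigoplus_j S_k(\Gamma_1^j(N))$, and the analogous ones for $K_0(\mathfrak{n})$. The first thing I would record is that these correspondences are compatible with the $\overline{\Q}$-structures. Since the coset representatives $t_j$ can be chosen with $(t_j)_v=1$ for $v\mid\mathfrak{n}$ and upper triangular (hence with entries in $\Q$) at the remaining finite places by the Iwasawa decomposition and the weak approximation invoked in Section 2.1, each component $f_j(z)=\mathbf{f}(t_j,z)$ is a classical modular form for $\Gamma_1^j(N)$; moreover a CM point $w$ of $\mathfrak{Z}$ has algebraic automorphy factors $j(g,w)\in\overline{\Q}$, so by the very definition of $\mathcal{M}_k(\overline{\Q})$ we get that $\mathbf{f}\in\mathcal{M}_k(\overline{\Q})$ if and only if every $f_j\in M_k(\Gamma_1^j(N),\overline{\Q})$, and likewise $\sigma\in\mathrm{Aut}(\C/\overline{\Q})$ acts on $\mathbf{f}$ through its components. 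This identifies the adelic notions with direct sums of the classical ones in a Galois- and $\overline{\Q}$-equivariant fashion.

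For part (1) I would then apply parts (1) and (4) of that Proposition componentwise: they give $M_k(\Gamma_1^j(N))=M_k(\Gamma_1^j(N),\overline{\Q})\otimes_{\overline{\Q}}\C$ and $S_k(\Gamma_1^j(N))=S_k(\Gamma_1^j(N),\overline{\Q})\otimes_{\overline{\Q}}\C$. Summing over $j$ and using the compatibility of the $\overline{\Q}$-structures recorded above yields the two base-change identities for $\mathcal{M}_k(K)$ and $\mathcal{S}_k(K)$; cuspidality is checked on each component, so nothing extra is needed, and the case $K=K_0(\mathfrak{n})$ is handled identically with the corresponding representatives.

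For part (2), with $r=0$ and $n=2m$ the Fourier–Jacobi expansion is the ordinary Fourier expansion, and the task is to match the adelic Fourier coefficients $c(\tau,q;\mathbf{f})$ with the classical ones $c(\tau,f_j)$. Writing $g=\gamma t_j k p_{\infty} k_{\infty}$ as in Section 3 with $q=q_jq_{\infty}$, the adelic expansion
\[
\mathbf{f}(g)=j(k_{\infty},z_0)^{-k}\sum_{\tau\in S}\det(q_{\infty})^{-k}c(\tau,q;\mathbf{f})e_{\infty}(\lambda(q^{\ast}\tau q)z_0)e_{\mathbb{A}}(\lambda(\tau\sigma))
\]
compared with the classical Fourier expansion of $f_j$ expresses $c(\tau,q;\mathbf{f})$, for $q=q_jq_{\infty}$, as the product of $c(q_j^{\ast}\tau q_j,f_j)$ with elementary factors (a power of $\det(q_j)$ and the root of unity $e_{\mathbb{A}}(\lambda(\tau\sigma_j))$), all lying in $\overline{\Q}$; conversely every classical coefficient of every $f_j$ arises this way. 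Hence all $c(\tau,q;\mathbf{f})\in\overline{\Q}$ if and only if each $f_j$ has algebraic Fourier coefficients, which by part (3) of the preceding Proposition is equivalent to each $f_j\in M_k(\overline{\Q})$, i.e.\ to $\mathbf{f}\in\mathcal{M}_k(\overline{\Q})$.

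I expect the main obstacle to lie in the bookkeeping for part (2): one must track carefully the normalising data $q_j,q_{\infty},\sigma_j$ relating $c(\tau,q;\mathbf{f})$ to $c(q_j^{\ast}\tau q_j,f_j)$ and verify that all comparison factors are genuinely algebraic, in particular that the relevant values of $e_{\mathbb{A}}$ are roots of unity. A secondary point needing attention is confirming that the $t_j$ may indeed be taken with entries in $\Q$ away from the level so that the componentwise $\overline{\Q}$-structure is the correct one; this is exactly what the Iwasawa decomposition at $v\nmid\mathfrak{n}$ and the weak approximation of Section 2.1 supply.
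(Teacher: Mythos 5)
Your proposal is correct and is essentially the argument the paper has in mind: the paper states this proposition without a written proof, relying precisely on the remark that (since $j(g,w)\in\overline{\Q}$ at CM points) adelic algebraicity is equivalent to algebraicity of every component $f_j$ under $\mathbf{f}\leftrightarrow(f_0,\ldots,f_h)$, and then on parts (1), (3), (4) of the preceding Proposition applied componentwise. Your additional bookkeeping for part (2) — matching $c(\tau,q;\mathbf{f})$ with the classical coefficients of the $f_j$ up to algebraic factors coming from $q_j,\sigma_j$ and values of $e_{\mathbb{A}}$ — is exactly the standard comparison implicitly used, so nothing is missing.
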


Let $r=0,n=2m$ and keep the notation for Eisenstein series in previous sections. Let $\mathbf{E}_l(g,s)$ be a Siegel Eisenstein series for group $G_n$. By explicit computation of Fourier expansion, we have $\mathbf{E}_l^{\ast}(g,l)\in\mathcal{M}_l(\overline{\Q})$. Clearly we also have $\mathbf{E}_l(g,l)\in\mathcal{M}_l(\overline{\Q})$.

\subsection{Differential operators and nearly holomorphic functions}

In this subsection, we summarise some of the result of \cite{Sh94}, (see also \cite[Chapter 3]{Sh00}) on differential operators on type D domains and then apply these operators to Siegel-type Eisenstein series.
We will be working with the the bounded realisation of our symmetric space but thanks to the remark above we can transfer the definitions from one realisation to the other. We set
\[
\mathfrak{B}=\{z\in\C_n^n:\transpose{z}=- z,z^{\ast}z<1_n\},\,\,\mathfrak{T}:=\{z\in\C_n^n:\transpose{z}=-z\},\,\,\eta(z):=1-z^{\ast}z.
\]
Here $\mathfrak{T}$ is the tangent space of $\mathfrak{B}$ at the origin $0$. 

Given a positive integer $d$ and two finite-dimensional complex vector spaces $W$ and $V$, we denote by $Ml_d(W,V)$ the vector space of all $\C$-multilinear maps of $W\times...\times W$ ($d$ copies) into $V$ and $S_d(W,V)$ the vector space of all homogeneous polynomial maps of $W$ into $V$ of degree $d$. We omit the symbol $V$ if $V=\C$. Given a representation $\omega:\mathrm{GL}_n(\C)\to V$ we define a representation $\{\omega\otimes\tau^d,Ml_d(\mathfrak{T},V)\}$ by
\[
[(\omega\otimes\tau^d)(a)h](u_1,...,u_d)=\omega(a)h(\transpose{a}u_1a,..,\transpose{a}u_da),
\]
for $a\in\mathrm{GL}_n(\C),h\in Ml_d(\mathfrak{T},V),u_i\in\mathfrak{T}$. In particular taking $d=1$ and $\omega$ the trivial representation, we define the representation $\{\tau,S_1(\mathfrak{T})\}$ of $\GL_n(\C)$ by $[\tau(a)h](u)=h(\transpose{a}ua)$ for $h\in S_1(\mathfrak{T}),u\in\mathfrak{T}$.

Take an $\R$-rational basis $\{\epsilon_{\nu}\}$ of $\mathfrak{T}$ over $\C$ and for $u\in\mathfrak{T}=\sum_{\nu}u_{\nu}\epsilon_{\nu}$. For $z\in\mathfrak{B}$, write $z=\sum_{\mu}z_{\nu}\epsilon_{\nu}$. For $f\in C^{\infty}(\mathfrak{B},V)$ we define $\mathfrak{D}f,\overline{\mathfrak{D}}f,\mathfrak{C}f\in C^{\infty}(\mathfrak{B},S_1(\mathfrak{T},V))$ by 
\[
(\mathfrak{D}f)(u)=\sum_{\nu}u_{\nu}\frac{\partial f}{\partial z_{\nu}},\,\,(\overline{\mathfrak{D}}f)(u)=\sum_{\nu}u_{\nu}\frac{\partial f}{\partial \overline{z}_{\nu}},\,\,(\mathfrak{C}f)(u)=(\mathfrak{D}f)(\transpose{\eta}(z)u\eta(z)).
\]

We further define $\mathfrak{D}^df,\overline{\mathfrak{D}}^df,\mathfrak{C}^df$ by
\[
\mathfrak{D}^df=\mathfrak{DD}^{d-1}f,\,\,\overline{\mathfrak{D}}^df=\overline{\mathfrak{D}}\overline{\mathfrak{D}}^{d-1}f,\,\,\mathfrak{C}^df=\mathfrak{CC}^{d-1}f,\,\,\mathfrak{D}^0f=\overline{\mathfrak{D}}^0f=\mathfrak{C}f=f.
\]
And $\mathfrak{D}_{\omega}^df\in C^{\infty}(\mathfrak{B},S_d(\mathfrak{T},V))$ by
\[
\mathfrak{D}_{\omega}^df=(\omega\otimes\tau^d)(\eta(z))^{-1}\mathfrak{C}^d[\omega(\eta(z))f].
\]
We now recall the important fact, due to Hua, Schmid, Johnson and Shimura (see for example \cite{Sh84}), that the representation $\{\tau^d,S_d(\mathfrak{T})\}$ is the direct sum of irreducible representations and each irreducible constituent has mulitplicity one. In particular for each $\mathrm{GL}_n(\C)$-stable subspace $Z\subset S_d(\mathfrak{T})$ we can define the projection map $\phi_Z$ of $S_d(\mathfrak{T})$ onto $Z$. Define $\mathfrak{D}_{\omega}^Zf\in C^{\infty}(\mathfrak{B},Z\otimes V)$ by $\mathfrak{D}_{\omega}^Zf=\phi_Z\mathfrak{D}_{\omega}^df$.

\begin{lem} With notation as above we have,
\begin{enumerate}
 \item $\pi^{-1}\mathfrak{D}f\in\mathfrak{A}_{\tau}(\overline{\Q})$ for every $f\in\mathfrak{A}_0(\overline{\Q})$.\newline
\item Let $Z$ be a $\mathrm{GL}_n(\C)$-stable subspace of $S_d(\mathfrak{T})$. If $f\in\mathfrak{A}_{\omega}(\overline{\Q})$ then 
\[
\pi^{-d}\mathfrak{P}_{\omega}(w)^{-1}\mathfrak{D}_{\omega}^Zf(w)
\]
is $\overline{\Q}$-rational for any CM point $w$.

\end{enumerate}
\end{lem}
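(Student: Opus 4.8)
The plan is to deduce both parts from the arithmeticity of the Maass--Shimura operators for $U(n,n)$ in \cite[Chapter III]{Sh00} (see also \cite{Sh94}), transported along the embedding $\iota:\mathfrak{B}\hookrightarrow\mathcal{B}$ of Section 5.1 and combined with the comparison of CM points and periods established there. The first step is the compatibility of the operators $\mathfrak{D},\mathfrak{C},\mathfrak{D}^{d}_{\omega},\mathfrak{D}^{Z}_{\omega}$ on $\mathfrak{B}$ with $\iota$. Since $\iota$ is the algebraic embedding induced by $M_n(\mathbb{B})\to M_{2n}(K)$, it sends the origin to the origin and CM points to CM points, and its differential at such a point identifies $\mathfrak{T}=\{z\in\C_n^n:\transpose{z}=-z\}$ with the space of skew matrices inside the tangent space $M_n(\C)$ of $\mathcal{B}$. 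This inclusion is $\GL_n(\C)$-equivariant for the action $u\mapsto\transpose{a}ua$, which is precisely what the two-sided action on the tangent space of $\mathcal{B}$ restricts to on the quaternionic subgroup, because there $\lambda(\alpha,z)=\mu(\alpha,z)$ (Section 5.1); it realises $\mathfrak{T}$ as a $\overline{\Q}$-rational direct summand, and under it the factor $\eta(z)=1-z^{\ast}z$ on $\mathfrak{B}$ is the restriction of the corresponding factor on $\mathcal{B}$. Multiplicity-freeness of $S_d$ of both tangent spaces then yields a $\GL_n(\C)$-equivariant, $\overline{\Q}$-rational restriction map $\mathrm{res}:S_d(M_n(\C))\to S_d(\mathfrak{T})$ matching the decompositions, so that for $F\in C^{\infty}(\mathcal{B},V)$ and $f=F\circ\iota$ one has, for a suitable $\GL_n(\C)$-stable $Z'\subset S_d(M_n(\C))$ mapping onto $Z$, the formula $\mathfrak{D}^{Z}_{\omega}f=\mathrm{res}\big(\mathfrak{D}^{Z'}_{\omega}F\big)\circ\iota$ (and in particular $\mathfrak{D}f=\mathrm{res}(\mathfrak{D}F)\circ\iota$).

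The second step is to reduce to the case where $f$ is such a pullback. Here $\mathfrak{A}_0(\overline{\Q})$ is the function field of the canonical $\overline{\Q}$-model of some $\Gamma\backslash\mathfrak{B}$, which by the Remark in Section 5.1 embeds as a closed $\overline{\Q}$-subvariety of the canonical model of a suitable $\Gamma'\backslash\mathcal{B}$; hence any $f\in\mathfrak{A}_0(\overline{\Q})$ is, as a rational function, the restriction of some $F\in\mathcal{A}_0(\overline{\Q})$. For part (1), $\pi^{-1}\mathfrak{D}f=\mathrm{res}(\pi^{-1}\mathfrak{D}F)\circ\iota$, and $\pi^{-1}\mathfrak{D}F$ has the analogous $\overline{\Q}$-rationality at CM points of $\mathcal{B}$ by the unitary case; evaluating at a CM point $w\in\mathfrak{B}$, using that $\mathfrak{p}(w)$ coincides modulo $\GL_n(\overline{\Q})$ with the CM period of $w$ viewed in $\mathcal{B}$ (this is how $\mathfrak{p}(w)$ was built in Section 5.1) and that $\mathrm{res}$ is $\overline{\Q}$-rational, gives $\mathfrak{P}_{\tau}(w)^{-1}\pi^{-1}\mathfrak{D}f(w)\in\overline{\Q}$, which is part (1). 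For part (2) one argues one CM point $w$ at a time: as in the argument for the tube-domain case earlier in this section, pick an auxiliary holomorphic algebraic modular form of suitable type invertible at $w$ and divide $f$ by it; this reduces the evaluation of $\pi^{-d}\mathfrak{P}_{\omega}(w)^{-1}\mathfrak{D}^{Z}_{\omega}f(w)$, up to $\overline{\Q}$-rational factors, to the weight-$0$ situation and hence to a pullback $F\in\mathcal{A}_{\omega}(\overline{\Q})$, where the identity $\pi^{-d}\mathfrak{P}_{\omega}(w)^{-1}\mathfrak{D}^{Z'}_{\omega}F(w)\in\overline{\Q}$ of \cite[Chapter III]{Sh00} applies; the first step transfers it back to $f$. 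Throughout one uses that the transition matrix $R$ of Equation (\ref{The matrix R}) has algebraic entries (Section 5.2), so that moving between realisations and dividing by auxiliary forms does not affect algebraicity.

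The step I expect to be the main obstacle is this reduction to pullbacks: proving that a general algebraic modular form on $\mathfrak{B}$ --- in particular for a representation $\omega$ that need not extend to the ambient $\GL_n(\C)\times\GL_n(\C)$ --- can, at each CM point, be matched with a pullback from $\mathcal{B}$. This is exactly where the canonical-model comparison of the Remark in Section 5.1 and the $\overline{\Q}$-rationality of $R$ carry the weight, and where one must keep careful track of the fact that $\mathfrak{p}(w)$ is only determined modulo $\GL_n(\overline{\Q})$; everything else reduces to transporting Shimura's identities along $\iota$.
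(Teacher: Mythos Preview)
Your overall strategy---reduce to the unitary case via the embedding $\iota:\mathfrak{B}\hookrightarrow\mathcal{B}$ and invoke \cite[Theorems 14.5, 14.7]{Sh00}---is exactly the paper's. The one tactical difference is in how you handle a general $f\in\mathfrak{A}_0(\overline{\Q})$ for part (1). You extend $f$ to some $F\in\mathcal{A}_0(\overline{\Q})$ using that $\Gamma\backslash\mathfrak{B}$ sits as a closed $\overline{\Q}$-subvariety of $\Gamma'\backslash\mathcal{B}$, then pull back $\pi^{-1}DF$. The paper instead avoids extending $f$ altogether: it chooses $p=\dim_{\C}\mathfrak{B}$ functions $g_1,\ldots,g_p\in\mathcal{A}_0(\overline{\Q})$ whose restrictions $f_j=g_j\circ\iota$ are algebraically independent, and then uses the chain rule
\[
\mathfrak{D}f=\sum_{j}\frac{\partial f}{\partial f_j}\,\mathfrak{D}f_j,
\]
where the coefficients $\partial f/\partial f_j$ lie in $\mathfrak{A}_0(\overline{\Q})$ by pure algebra and $\mathfrak{D}f_j=(Dg_j)\circ\iota$ is handled by the unitary result. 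This is precisely Shimura's own argument for \cite[Theorem 14.5]{Sh00}. Your route is a bit more direct but leans on the closed-immersion statement from the Remark in Section~5.1; the paper's route needs only the existence of enough algebraically independent pullbacks, which is a weaker input. Both are valid, and for part (2) both the paper and you defer to the mechanism of \cite[Theorem 14.7]{Sh00}, so there is no substantive divergence there.
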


\begin{proof}
The proof is same as the one in \cite[Theorem 14.5, Theorem 14.7]{Sh00} (see also \cite[Sections 5 and 6]{Sh84}). Indeed, as we have a natural inclusion $\mathfrak{B}\to\mathcal{B}$ we can reduce our problem to unitary case. For example for (1), denote $Df$ for the differential operators in unitary case. The lemma is proved for this case in \cite[Theorem 14.5]{Sh00}. Let $p$ be the complex dimension of $\mathfrak{B}$, we can take $p$ elements $g_1,...,g_p\in\mathcal{A}_0(\overline{\Q})$ such that $g_1\circ\epsilon,...,g_p\circ\epsilon$ are algebraically independent.  Put $f_j=g_j\circ\epsilon$. As shown in last section $g_j\circ\epsilon\in\mathfrak{A}_0(\overline{\Q})$ so $\partial/\partial f_1,...,\partial/\partial f_p$ are well-defined derivations of $\mathfrak{A}_0(\overline{\Q})$. For every $f\in\mathfrak{A}_0(\overline{\Q})$ we have $\mathfrak{D}f=\sum_j(\partial f/\partial f_j)\mathfrak{D}f_j$. Now $\mathfrak{D}(f_j)=(Dg_j)\circ\epsilon$ and $\pi^{-1}Dg_j$ is $\overline{\Q}$-rational. This proves our assertion.
\end{proof}

We now set $r(z):=-\eta(z)^{-1}\overline{z}$. Let $d$ be a nonnegative integer and $\{\omega,V\}$ the representation as before. A function $f\in C^{\infty}(\mathfrak{B},V)$ is called nearly holomorphic of degree $d$ if it can be written as a polynomial in $r$, of degree less than $d$, with $V$-valued holomorphic functions on $\mathfrak{B}$ as coefficients. We denote the space of such functions by $\mathfrak{N}^d(\mathfrak{B},V)$. Let $\mathfrak{N}^d_{\omega}$ be the space consisting of functions satisfying the modular properties as in $M_{\omega}$ but now replacing the holomorphic condition with nearly holomorphic. For a congruence subgroup $\Gamma$ we can similarly define the space $\mathfrak{N}^d_{\omega}(\Gamma)$. An exact same argument an in the proof of \cite[Lemma 14.3]{Sh00}, shows that this space is finite-dimensional over $\C$.

Suppose $V$ is $\overline{\Q}$-rational. A function $f\in\mathfrak{N}_{\omega}^d$ is called algebraic, denoted as $f\in\mathfrak{N}_{\omega}^d(\overline{\Q})$, if $\mathfrak{P}_{\omega}(w)^{-1}f(w)$ is $\overline{\Q}$-rational for $w\in \mathcal{W}=\{g\cdot 0:g\in G(\overline{\Q})\}$. Put $\mathfrak{N}_{\omega}^d(\Gamma,\overline{\Q})=\mathfrak{N}_{\omega}^d(\overline{\Q})\cap\mathfrak{N}_{\omega}^d(\Gamma)$. The proof of following lemma is same as the one in \cite[Theorem 14.9]{Sh00}.

\begin{lem}
Let $Z$ be an irreducible subspace of $S_p(T)$. Then $\pi^{-p}\mathfrak{D}_{\omega}^Zf\in\mathfrak{N}_{\omega\otimes\tau_Z}^{d+p}(\overline{\Q})$ for every $f\in\mathfrak{N}_{\omega}^d(\overline{\Q})$. Here $\tau_Z$ is the restriction of $\tau^p$ to $Z$. 
\end{lem}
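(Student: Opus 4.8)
Let $Z$ be an irreducible subspace of $S_p(\mathfrak{T})$. Then $\pi^{-p}\mathfrak{D}_{\omega}^Z f\in\mathfrak{N}_{\omega\otimes\tau_Z}^{d+p}(\overline{\Q})$ for every $f\in\mathfrak{N}_{\omega}^d(\overline{\Q})$, where $\tau_Z$ is the restriction of $\tau^p$ to $Z$.

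The statement asserts that $\pi^{-p}\mathfrak{D}^Z_\omega f$ lies in $\mathfrak{N}^{d+p}_{\omega\otimes\tau_Z}(\overline{\Q})$; unwinding the definitions, this requires checking three things about $\pi^{-p}\mathfrak{D}^Z_\omega f$: that it is nearly holomorphic of degree $<d+p$ on $\mathfrak{B}$, that it satisfies the modular transformation law attached to the representation $\omega\otimes\tau_Z$, and that for every $w\in\mathcal{W}=\{g\cdot 0:g\in G(\overline{\Q})\}$ the value $\mathfrak{P}_{\omega\otimes\tau_Z}(w)^{-1}\,\pi^{-p}\mathfrak{D}^Z_\omega f(w)$ is $\overline{\Q}$-rational (note $Z\otimes V$ is $\overline{\Q}$-rational since $\mathfrak{T}$ admits a $\overline{\Q}$-rational basis and $V$ is $\overline{\Q}$-rational). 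The plan is to follow \cite[Theorem 14.9]{Sh00} (see also \cite[\S\S5--6]{Sh84} and \cite{Sh94}), reducing the arithmetic statement to the unitary case through the natural holomorphic inclusion $\epsilon:\mathfrak{B}\hookrightarrow\mathcal{B}$.

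The near-holomorphy, the degree shift, and the equivariance are purely local and are exactly the computations of \cite{Sh94}. Since $\eta(z)=1-z^\ast z$ and $r(z)=-\eta(z)^{-1}\overline{z}$ are polynomials in $r$ of degree one with holomorphic coefficients, the formula $(\mathfrak{C}f)(u)=(\mathfrak{D}f)(\transpose{\eta}(z)u\eta(z))$ shows by induction that $\mathfrak{C}^p$ raises the near-holomorphy degree by $p$; since the conjugation by $\omega(\eta(z))$ in the definition of $\mathfrak{D}^p_\omega$ is inserted and then removed, and $\phi_Z$ is merely a $\C$-linear projection, $\mathfrak{D}^Z_\omega f\in\mathfrak{N}^{d+p}(\mathfrak{B},Z\otimes V)$. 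For the transformation law, one uses the cocycle identity $\lambda(\gamma,z)^\ast\eta(\gamma z)\lambda(\gamma,z)=\eta(z)$ established in Section 2 (equivalently $\eta(\gamma z)=\widehat{\lambda(\gamma,z)}\,\eta(z)\,\lambda(\gamma,z)^{-1}$) to check directly that $f|_\omega\gamma=f$ forces $\mathfrak{D}^p_\omega f$ to transform by $\omega\otimes\tau^p$; as $Z$ is $\GL_n(\C)$-stable and $\phi_Z$ is $\GL_n(\C)$-equivariant, $\mathfrak{D}^Z_\omega f$ transforms by $\omega\otimes\tau_Z$. Combined with the finite-dimensionality of $\mathfrak{N}^{d+p}_{\omega\otimes\tau_Z}(\Gamma)$ (proved as in \cite[Lemma 14.3]{Sh00}), this settles everything except arithmeticity.

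The algebraicity at CM points is the substantive part, and here we transfer the problem to $U(\mathcal{T})$ exactly as in \cite[Theorem 14.9]{Sh00}. The point is that under $\epsilon:\mathfrak{B}\hookrightarrow\mathcal{B}$ the matrix $\eta_{\mathfrak{B}}(z)=1-z^\ast z$ is the restriction of the corresponding $\eta_{\mathcal{B}}$, and $\mathfrak{T}$ is a $\GL_n(\C)$-stable subspace of the tangent space of $\mathcal{B}$ at the origin, so the operators $\mathfrak{D},\mathfrak{C}$ on $\mathfrak{B}$ are compatible, along the directions of $\mathfrak{T}$, with the corresponding operators on $\mathcal{B}$, and the restriction map $S_p(M_n(\C))\to S_p(\mathfrak{T})$ intertwines $\phi_Z$ with the unitary projections. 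As in the proof of part (1) of the preceding lemma, one chooses $g_1,\dots,g_q\in\mathfrak{A}_0(\overline{\Q})$ with $q=\dim_{\C}\mathfrak{B}$, of the form $g_j=\overline{g}_j\circ\epsilon$ with $\overline{g}_j\in\mathcal{A}_0(\overline{\Q})$ and $g_1,\dots,g_q$ algebraically independent, uses the derivations $\partial/\partial g_j$ to express $\mathfrak{D}$ via the restrictions of the unitary differential operators, reduces a general nearly holomorphic $f\in\mathfrak{N}^d_\omega(\overline{\Q})$ to the holomorphic case by the structure of nearly holomorphic forms as images of holomorphic ones under the $\mathfrak{D}$-operators (which is part (2) of the earlier lemma, i.e.\ \cite[Theorems 14.5, 14.7]{Sh00}), and finally invokes the fact proved in Section 5.1 that every $w\in\mathcal{W}\subset\mathfrak{B}$ is a CM point of $\mathcal{B}$ whose period $\mathfrak{p}(w)$ agrees up to $\overline{\Q}^\times$ with the unitary one. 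Then \cite[Theorem 14.9]{Sh00} for $\mathcal{B}$ gives $\overline{\Q}$-rationality of $\pi^{-p}\mathfrak{P}_{\omega\otimes\tau_Z}(w)^{-1}\mathfrak{D}^Z_\omega f(w)$ for all $w\in\mathcal{W}$, completing the proof. The main obstacle I anticipate is making this reduction fully rigorous: one must check that the irreducible decomposition of $S_p(\mathfrak{T})$ is compatible with that for $\mathcal{B}$ so that $\phi_Z$ matches a sum of unitary projections, and that enough elements of $\mathfrak{A}_0(\overline{\Q})$ arise as pullbacks from $\mathcal{A}_0(\overline{\Q})$ for the derivation argument to be valid; everything else is a routine transcription of \cite{Sh00,Sh84,Sh94}.
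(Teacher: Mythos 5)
Your proposal is correct and follows essentially the same route as the paper, whose proof is simply the observation that the argument of \cite[Theorem 14.9]{Sh00} carries over: the near-holomorphy and equivariance are the local computations of \cite{Sh94}, and the algebraicity at CM points is transferred to the unitary case through the embedding $\mathfrak{B}\hookrightarrow\mathcal{B}$ exactly as in the preceding lemma. Your extra elaboration (the derivation argument via $g_j\circ\epsilon$ and the matching of periods) is precisely the mechanism the paper relies on, so there is nothing to add.
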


We now extend the above definitions to adelic modular forms. Let $\mathbf{f}\in\mathcal{M}_k$ and viewing it as a function on $G(\mathbb{A}_{\mathbf{h}})\times\mathfrak{B}$ by setting $\mathbf{f}(g_{\mathbf{h}},z)=j(g_z,z_0)^k\mathbf{f}(g_{\mathbf{h}}g_z)$ with $z=g_z\cdot 0\in\mathfrak{B}$. Then $\mathfrak{D}_k\mathbf{f},\mathfrak{D}_k^Z\mathbf{f}$ is defined as applying differential operators on $z\in\mathfrak{B}$. A function $\mathbf{f}:G(\mathbb{A}_{\mathbf{h}})\times\mathfrak{B}\to\C$ is called nearly holomorphic if it is nearly holomorphic in $z\in\mathfrak{B}$. We can then define the space $\mathcal{N}_k^d$ and of nearly holomorphic modular forms as before. Similarly, we can define subspace $\mathcal{N}_k^d(\overline{\Q})$. These definitions are equivalent to all components in the correspondence $\mathbf{f}\leftrightarrow(f_1,...,f_h)$ are nearly holomorphic or algebraic nearly holomorphic. 

We now apply the differential operators to Siegel-type Eisenstein series and show that it is nearly holomorphic for certain values of $s$. We will keep the notation of section 4, and so in particular $\mathbf{E}_l(g,s)$ is the Siegel-type Eisenstein series associated to group $G_n,n=2m$ of weight $l$ and character $\chi$.

\begin{prop} \label{Nearly holomorphic Eisenstein series}
Assume $l>n-1$ and let $\mu\in\Z$ such that $n-1<\mu\leq l$. Then\\
(1) $\mathbf{E}_l(g,\mu)\in\pi^{\alpha}\mathcal{N}_l^{m(l-\mu)}(\overline{\Q})$ with $\alpha=m(l-\mu)$;\\
(2) Denote $\mathfrak{E}_l(g,s)=\Lambda_{\mathfrak{n}}(s,\chi)\mathbf{E}_l(g,s,\chi)$. Then $\mathfrak{E}_l(g,\mu)\in\pi^{\beta}\mathcal{N}_l^{m(l-\mu)}(\overline{\Q})$ with $\beta=m(l+\mu)-m(m-1)$.
\end{prop}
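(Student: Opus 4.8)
The plan is to read both statements off the explicit Fourier expansion of the Siegel--type Eisenstein series recalled in Section~4, specialised at $s=\mu$, exactly as in Shimura's treatment of the symplectic and unitary cases in \cite[\S 17]{Sh00}. Since $\mathbf{E}_l$ and $\mathbf{E}_l^{\ast}$ differ only by right translation by the fixed element $J_{\mathbf{h}}^{-1}\in G(\mathbb{A}_{\mathbf{h}})$, which has algebraic entries and hence preserves near holomorphy and algebraicity, it suffices to analyse $\mathbf{E}_l^{\ast}(g,\mu)$. First I would show that $c(h,q,\mu)=0$ unless $h>0$: writing $p,q$ and $t=m-p-q$ for the numbers of positive, negative and zero eigenvalues of $h$, the archimedean factor $\xi(q_\infty q_\infty^{\ast},h,\mu+l,\mu-l)$ is, by the proposition on $\xi$, a finite holomorphic quantity divided by $\Gamma_{m-q}(\mu+l)\,\Gamma_{m-p}(\mu-l)$; since $\mu\geq n=2m$ the arguments $\mu+l$ and $2\mu-2m+1$ are large enough that $\Gamma_t(2\mu-2m+1)$ and $\Gamma_{m-q}(\mu+l)$ are finite and non-zero, whereas $\mu-l\leq 0$ forces a pole of $\Gamma_{m-p}(\mu-l)$ as soon as $m-p\geq 1$; hence $\xi$, and with it $c(h,q,\mu)$, vanishes unless $p=m$. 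Thus $\mathbf{E}_l^{\ast}(g,\mu)$ has Fourier support on positive definite $h$, as a nearly holomorphic form must.

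For $h>0$ one has $\xi(y,h,\mu+l,\mu-l)=\Gamma_m(\mu+l)^{-1}\omega(y,h,\mu+l,\mu-l)$, and the key input is the explicit description of Shimura's confluent hypergeometric function $\omega(y,h,\alpha,\beta)$ when the second parameter $\beta=\mu-l$ is a non-positive integer (the type-$D$ analogue of the Siegel computation in \cite{Sh94}; compare the recorded value $\omega(y,h,2l,0)=2^{2-2m}(2\pi i)^{2ml}\det(h)^{l-\frac{2m-1}{2}}e(i\lambda(hy))$): in that regime $\omega(y,h,\mu+l,\mu-l)$ equals $e(i\lambda(hy))$ times a polynomial of total degree $m(l-\mu)$ in the entries of $(4\pi y)^{-1}$ — equivalently a polynomial of that degree in $r(z)$ — whose coefficients lie in $\overline{\Q}$ up to an explicit power of $\pi$ and of $2$. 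Feeding this back through the Fourier expansion and the identification of $\mathfrak{Z}$ with the bounded domain from Section~3 shows that $\mathbf{E}_l^{\ast}(g,\mu)$ is nearly holomorphic of degree $m(l-\mu)$; the weight-$l$ transformation law is automatic because $\mathbf{E}_l^{\ast}(\cdot,s)$ is a modular form of weight $l$ for every fixed $s$, so $\mathbf{E}_l^{\ast}(g,\mu)\in\mathcal{N}_l^{m(l-\mu)}$. The power of $\pi$ and the algebraicity then come from collecting the transcendental contributions to $\det(q_\infty)^{-l}c(h,q,\mu)$: the constant $A(n)$, the character/Gauss-sum factor $\chi(\det q_{\mathbf{h}})^{-1}$ and the finite product $\prod_p P_{h,q,p}(\chi^{\ast}(p)p^{-\mu})$ from $\alpha_{\mathfrak{n}}$ are algebraic; $\Gamma_m(\mu+l)^{-1}$ contributes $\pi^{-m(m-1)}$ up to $\overline{\Q}^{\times}$ since its $\Gamma$-factors are evaluated at positive integers; $\omega$ contributes its $(2\pi i)$-content; and for $h>0$ the series $\alpha_{\mathfrak{n}}$ carries the factor $\bigl(\prod_{i=0}^{m-1}L_{\mathfrak{n}}(2\mu-2i,\chi^2)\bigr)^{-1}$, each $L$-value being at a positive even integer with $\chi^2$ even and hence an algebraic multiple of $\pi$ raised to its argument. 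Assembling these gives $\alpha=m(l-\mu)$, which is (1); for (2) one multiplies by $\Lambda_{\mathfrak{n}}(\mu,\chi)=\prod_{i=0}^{n-1}L_{\mathfrak{n}}(2\mu-2i,\chi^2)$, half of whose factors cancel the denominator just mentioned, and then tracks the resulting net power of $\pi$, obtaining $\beta=m(l+\mu)-m(m-1)$. Finally the CM-point characterisation of algebraicity in the tube case (the proposition giving $M_k(\overline{\Q})$ via algebraic Fourier coefficients, extended to nearly holomorphic forms as in \cite[\S 14]{Sh00}) upgrades the algebraicity of the normalised Fourier coefficients to $\mathbf{E}_l(g,\mu)\in\pi^{\alpha}\mathcal{N}_l^{m(l-\mu)}(\overline{\Q})$ and $\mathfrak{E}_l(g,\mu)\in\pi^{\beta}\mathcal{N}_l^{m(l-\mu)}(\overline{\Q})$.

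The main obstacle is the archimedean step: identifying $\omega(y,h,\mu+l,\mu-l)$ at the degenerate parameter $\mu-l\leq 0$ as a polynomial of degree exactly $m(l-\mu)$ with the stated algebraicity and the correct $(2\pi i)$-content, and then performing the bookkeeping of the Gamma factors together with the critical values $L_{\mathfrak{n}}(2\mu-2i,\chi^2)$ so that the two exponents come out as $\alpha=m(l-\mu)$ and $\beta=m(l+\mu)-m(m-1)$; since we are in the tube case $r=0$, $n=2m$, this runs parallel to Shimura's Siegel computation, but the constants must be matched with care. As an alternative (and consistency check) one can instead stay within the differential-operator formalism of Section~5.3: express $\mathbf{E}_l(g,\mu)$, up to an algebraic constant and a power of $\pi$, as an iterate of the weight-raising Maass--Shimura operator (the Pfaffian operator, which raises the scalar weight by $1$ and carries $\pi^{-m}$) applied to the holomorphic algebraic Eisenstein series $\mathbf{E}_\mu(g,\mu)\in\mathcal{M}_\mu(\overline{\Q})$ — available because $\mu>n-1$ — and then invoke the lemma that $\pi^{-p}\mathfrak{D}_\omega^Z$ preserves $\overline{\Q}$-rational nearly holomorphic forms; here the obstacle moves to establishing that differential-operator identity and tracking its constant, again a ratio of Gamma factors at integer arguments.
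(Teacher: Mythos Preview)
Your ``alternative (and consistency check)'' is precisely the paper's proof. The paper defines the Pfaffian operator $\Delta_q^p$ as $(\mathfrak{D}_\omega^Z\cdot)(\psi)$ with $\omega=\det^q$, $Z=\C\psi\subset S_{mp}(\mathfrak{T})$, $\psi=\det^{p/2}$; the lemmata of Section~5.3 give $\Delta_q^p\mathcal{N}_q^t(\overline{\Q})\subset\pi^{mp}\mathcal{N}_{q+p}^{t+mp}(\overline{\Q})$; and then Shimura's explicit formula \cite[Theorem~4.3]{Sh84} yields $\Delta_\mu^{l-\mu}\mathbf{E}_\mu(g,\mu)=C\cdot\mathbf{E}_l(g,\mu)$ with $C\in\overline{\Q}^\times$, applied to the holomorphic algebraic input $\mathbf{E}_\mu(g,\mu)\in\mathcal{M}_\mu(\overline{\Q})$ (available since $\mu>n-1$). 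That is the entire argument.

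Your primary route through the confluent hypergeometric function $\omega(y,h,\mu+l,\mu-l)$ is the approach of \cite[\S17]{Sh00} and is sound in principle, but it does not fit cleanly into this paper as written. The Fourier-coefficient characterisation of $\overline{\Q}$-rationality established here (the proposition in \S5.2) is stated only for \emph{holomorphic} forms in the tube case; you would need its extension to nearly holomorphic forms, which the paper does not develop. The differential-operator route avoids this: it reduces to the holomorphic series $\mathbf{E}_\mu(g,\mu)$, whose algebraicity via Fourier coefficients is in hand, and then the CM-point definition of algebraicity for $\mathcal{N}_l^d(\overline{\Q})$ is preserved directly by Lemma~5.8. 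What your Fourier approach buys is an explicit description of each nearly holomorphic coefficient and a direct bookkeeping of the $\pi$-power, at the cost of the archimedean analysis you flag; the paper's approach buys a two-line proof once \cite[Theorem~4.3]{Sh84} is quoted, at the cost of that citation carrying the archimedean content.
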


\begin{proof} For this we use \cite[Theorem 2D]{Sh84} which classifies the irreducible representations of $(\tau^{mp}, S_{mp}(\mathfrak{T}))$. In particular for 
for $p\in\Z$ and a weight $q$ we can define the operator $\Delta_q^p$ by $\Delta_q^p\mathbf{f}=(\mathfrak{D}_{\omega}^Z\mathbf{f})(\psi)$ with $\omega=\det^q,Z=\C\psi\subset S_{mp}(\mathfrak{T})$ and $\psi=\det^{p/2}$. Here the square root of the determinant denotes the Pfaffian of the skew-symmetric matrix. Then 
\[
\Delta_q^p\mathcal{N}_q^t(\overline{\Q})\subset\pi^{mp}\mathcal{N}_{q+p}^{t+mp}(\overline{\Q}).
\]
We have shown that $\mathbf{E}_l(g,l)\in\mathcal{M}_l(\overline{\Q})$ so $\Delta_l^p\mathbf{E}_l(g,l)\in\pi^{mp}\mathcal{N}_{l+p}^{mp}(\overline{\Q})$. Take $p=l-\mu$, then by the explicit formula in \cite[Theorem 4.3]{Sh84}, we have
\[
\Delta_{\mu}^p\mathbf{E}_{\mu}(g,\mu)=C\cdot\mathbf{E}_l(g,\mu),\,\,\,\text{with}\,\,C\in\overline{\Q}^{\times}.
\]
This concludes the proof of the proposition.
\end{proof}

\section{Main Results}

We now recall that we have established the integral representation of the L-function,
\[
L(s,\mathbf{f},\chi)\mathbf{f}(g)=c_k(s)\int_{G(\Q)\backslash G(\mathbb{A})/K_1(\mathfrak{n})K_{\infty}}\mathfrak{E}(g\times h,s)\mathbf{f}(h)\mathbf{d}h.
\]
Here $\mathfrak{E}_k(\mathfrak{g},s)=\Lambda_{\mathfrak{n}}(s,\chi)\mathbf{E}(\mathfrak{g},s)$; $\mathbf{E}(\mathfrak{g},s)=E_k(\mathfrak{g}\sigma^{-1},s)$ for $\mathfrak{g}\in G_N(\mathbb{A})$ and $\sigma=1$ if $v\nmid\mathfrak{n}$, $\sigma=\tilde{\tau}_m$ if $v|\mathfrak{n}$, where $E_k(\mathfrak{g},s)$ is the Siegel-type Eisenstein series defined on $G_N$ of  weight $k$ and $N=2n$. 

We first prove a Lemma which is the analogue of \cite[Lemma 26.12]{Sh00} in our setting.

\begin{lem} \label{decomposition}
Let $\mathbf{f}\in\mathcal{N}^d_k(\overline{\Q})$ be an algebraic nearly holomorphic form associated to group $G_N$. Then there exist $\mathbf{g}_j,\mathbf{h}_j\in\mathcal{N}_k^d(\overline{\Q})$ associated to group $G_n$ such that
\[
\mathbf{f}(g\times h)=\sum_{j=1}^e\mathbf{g}_j(g)\overline{\mathbf{h}_j(h)}.
\]
\end{lem}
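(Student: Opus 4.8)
The plan is to mimic the argument of \cite[Lemma 26.12]{Sh00}, transposed to our quaternionic setting, using the doubling embedding $\rho: G_n \times G_n \hookrightarrow G_N$ together with the structure of nearly holomorphic forms recalled in Section 5.3. First I would restrict the nearly holomorphic form $\mathbf{f}$ on $G_N$ to the image of $\rho$; viewing $\mathbf{f}$ as a function on $G_N(\mathbb{A}_{\mathbf{h}}) \times \mathfrak{B}_N$ and using the compatibility of $\iota: \mathfrak{B}\times\mathfrak{B}\to\mathfrak{B}_N$ with the group embedding (the analogue of Proposition \ref{diagonal embedding} for the bounded realisation), the pullback $\mathbf{f}(g \times h)$ becomes a function on $(G_n(\mathbb{A}_{\mathbf{h}})\times\mathfrak{B}) \times (G_n(\mathbb{A}_{\mathbf{h}})\times\mathfrak{B})$ which is nearly holomorphic in the first variable $z$ and anti-nearly-holomorphic in the second variable $w$ (the conjugation arising because the second factor enters $\iota$ through $\overline{U(z_2)}$). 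The key structural input is that the space $\mathcal{N}^d_k(\overline{\Q})$ for $G_n$ is finite dimensional over $\overline{\mathbb{Q}}$ (the adelic analogue of \cite[Lemma 14.3]{Sh00}, already invoked in Section 5.3), so that for fixed $h$ the function $g\mapsto \mathbf{f}(g\times h)$ lies in a fixed finite dimensional $\overline{\mathbb{Q}}$-vector space $\mathcal{N}^d_k(\overline{\mathbb{Q}})$.

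Concretely, I would fix a $\overline{\mathbb{Q}}$-basis $\mathbf{g}_1, \ldots, \mathbf{g}_e$ of $\mathcal{N}^d_k(\overline{\mathbb{Q}})$ for $G_n$. Then there exist uniquely determined functions $c_j(h)$ on $G_n(\mathbb{A})$ with $\mathbf{f}(g\times h) = \sum_{j=1}^e \mathbf{g}_j(g)\, c_j(h)$. The task reduces to showing that each $c_j$, after the conjugation, is itself an algebraic nearly holomorphic form, i.e. that $\overline{c_j} \in \mathcal{N}^d_k(\overline{\mathbb{Q}})$, so we may set $\mathbf{h}_j := \overline{c_j}$. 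That $c_j$ is nearly holomorphic in $w$ (equivalently $\overline{c_j}$ nearly holomorphic in the holomorphic variable) and transforms with the correct automorphy in the second variable follows from the symmetry of $\iota$ in its two arguments together with Proposition \ref{diagonal embedding}; the weight is $k$ because $j(\rho(g_1,g_2),\iota(z_1,z_2))$ splits, up to the $B$-factor, as $j(g_1,z_1)\overline{j(g_2,z_2)}$ times an algebraic automorphy factor on $G_N$ evaluated at CM points. To extract $c_j$ one picks $g$-points $g^{(1)}, \ldots, g^{(e)}$ (which can be taken to be CM points of $\mathfrak{B}$, hence with algebraic coordinates) at which the matrix $(\mathbf{g}_j(g^{(i)}))_{i,j}$ is invertible over $\overline{\mathbb{Q}}$; then $c_j(h)$ is a $\overline{\mathbb{Q}}$-linear combination of the values $\mathbf{f}(g^{(i)}\times h)$.

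The algebraicity is then the crux: for a CM point $w$ of $\mathfrak{B}$ (for $G_n$) and the CM points $g^{(i)}$, the product point $\iota(g^{(i)}, w) \in \mathfrak{B}_N$ is again a CM point — this is where I would use that $\iota$ is an algebraic embedding compatible with the inclusions into the ambient unitary groups, so CM points map to CM points, together with the period relation $\mathfrak{p}(\gamma z)GL_n(\overline{\mathbb{Q}}) = \lambda(\gamma,z)\mathfrak{p}(z)GL_n(\overline{\mathbb{Q}})$ and its product analogue. Since $\mathbf{f}\in\mathcal{N}^d_k(\overline{\mathbb{Q}})$ on $G_N$, the value $\mathfrak{P}_k(\iota(g^{(i)},w))^{-1}\mathbf{f}(g^{(i)}\times w)$ is $\overline{\mathbb{Q}}$-rational, and comparing the period $\mathfrak{p}(\iota(g^{(i)},w))$ with $\mathfrak{p}(g^{(i)})$ and $\mathfrak{p}(w)$ (up to $GL_n(\overline{\mathbb{Q}})$, via the factorisation of the automorphy factor from Proposition \ref{diagonal embedding}) shows that $\mathfrak{P}_k(w)^{-1}c_j(w)$ differs from something $\overline{\mathbb{Q}}$-rational by the known $\overline{\mathbb{Q}}$-rational values of the $\mathbf{g}_j(g^{(i)})$. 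Hence $\overline{c_j}\in\mathcal{N}^d_k(\overline{\mathbb{Q}})$. The main obstacle I anticipate is bookkeeping the conjugation and the period factors coming from the $B(z_1,z_2)$ term in Proposition \ref{diagonal embedding}: one must check that this factor is algebraic at CM points and absorb it into the $GL_n(\overline{\mathbb{Q}})$ ambiguity of the periods, which is exactly the point where the non-tube geometry (nontrivial $w$-components in $\iota$) makes the computation heavier than in \cite{Sh00}, but it does not affect the rationality conclusion.
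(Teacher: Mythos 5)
Your overall plan is the same as the paper's: restrict $\mathbf{f}$ along the doubling embedding, observe that the pullback is nearly holomorphic in $z$ and anti-nearly-holomorphic in $w$, expand against a finite $\overline{\Q}$-rational basis $\{\mathbf{g}_j\}$ of $\mathcal{N}^d_k(\overline{\Q})$ for $G_n$, recover the coefficient functions $\mathbf{h}_j$ by solving a linear system at finitely many points where $\det(\mathbf{g}_j(g^{(i)}))\neq 0$, and then verify algebraicity of the $\mathbf{h}_j$ by evaluating at CM data and comparing periods. Up to that last step your argument and the paper's coincide.

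The difference, and the place where your write-up has a real gap, is the algebraicity verification. You propose to evaluate at arbitrary CM points $w$ of $\mathfrak{B}$ and at product points $\iota(g^{(i)},w)$, which requires two facts you assert but do not establish: that $\iota$ carries a pair of CM points to a CM point of the big group $G_N$, and that the period of the product point factors as (essentially) $\mathfrak{p}(g^{(i)})$ times $\mathfrak{p}(w)$ up to $\GL(\overline{\Q})$ after absorbing the $B(z_1,z_2)$-factor --- you yourself flag this as the ``main obstacle''. The paper sidesteps this entirely: because the embedding was normalised by the matrix $S$ so that $\iota(z_0\times z_0)=Z_0$ is the origin of the doubled domain, one only ever needs the period at the doubled origin, where $\mathfrak{P}_k(z_0\times z_0)=\mathfrak{P}_k(z_0)\mathfrak{P}_k(z_0)$ is immediate; the variation needed for density is carried by the finite adelic variable (and by translates of $z_0$ under $G(\overline{\Q})$, where the period transforms by the algebraic automorphy factor $\lambda(\gamma,z_0)$), since the adelic definition of algebraic nearly holomorphic forms only requires checking on the orbit $\mathcal{W}=\{\gamma z_0:\gamma\in G(\overline{\Q})\}$. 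So choose your evaluation points in the $G(\overline{\Q})$-orbit of the origin and work at $z_0\times z_0$: this removes the need for the CM-compatibility of $\iota$ at general points and for any analysis of the $B(z_1,z_2)$ term, and the rest of your argument then goes through as in the paper. If you insist on your more general route, you must actually prove the two asserted facts (plausibly via the auxiliary embedding into the unitary group and Shimura's period formalism), which is genuinely more work than the statement requires.
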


\begin{proof}
Write
\[
\mathbf{f}(g\times h)=j(g_{\infty}\times h_{\infty},z_0\times z_0)^{-k}\mathbf{f}(g_{\mathbf{h}}\times h_{\mathbf{h}},z\times w)
\]
with $z=g_{\infty}z_0,w=h_{\infty}z_0\in\mathfrak{Z}_{m,r}$. By definition, $f(z,w):=\mathbf{f}(g_{\mathbf{h}}\times h_{\mathbf{h}},z\times w)$ is nearly holomorphic in $z\times w$. Similarly to the proof of \cite[Lemma 26.12]{Sh00}, one can show that it is also nearly holomorphic in $z$ and $\overline{f(z,w)}$ is nearly holomorphic in $w$. Therefore $\mathbf{f}\in\mathcal{N}_k^d(\overline{\Q})$ (resp. $\overline{\mathbf{f}}\in\mathcal{N}_k^d(\overline{\Q})$) as a function in $g$ or $h$.

Let $\{\mathbf{g}_j\}_{j=1}^e$ be a $\overline{\Q}$-rational basis of $\mathcal{N}_k^d(\overline{\Q})$. For each fixed $h$ we have $\mathbf{f}(g\times h)=\sum_{j=1}^e\mathbf{g}_j(g)\overline{\mathbf{h}_j(h)}$ with $\mathbf{h}_j(h)\in\C$. Since $\mathbf{g}_j$ are linearly independent we can find $e$ points $g_1,...,g_e$ such that $\det(\mathbf{g}_j(z_k))_{j,k=1}^e\neq 0$. Solving the linear equations $\overline{\mathbf{f}(g,h)}=\sum_{j=1}^e\overline{\mathbf{g}_j(z_k)}\mathbf{h}_j(h)$ we find functions $\mathbf{h}_j\in\mathcal{N}_k^d$. 

It suffices to prove $\{\mathbf{h}_j\}$ are algebraic. Since $\mathcal{W}=\{g\cdot z_0:g\in G(\overline{\Q})\}$ is a dense subset of $\mathfrak{Z}_{m,r}$, we can take $g_j$ such that $g_jz_0\in\mathcal{W}$. We easily calculate the period $\mathfrak{P}_k(z_0\times z_0)=\mathfrak{P}_k(z_0)\mathfrak{P}_k(z_0)$ hence
\[
\mathfrak{P}_k(z_0\times z_0)^{-1}\overline{\mathbf{f}(g_{\mathbf{h}}\times h_{\mathbf{h}})}=\sum_{j=1}^e\mathfrak{P}_k(z_0)^{-1}\overline{\mathbf{g}_j(g_{\mathbf{h}})}\mathfrak{P}_k(z_0)^{-1}h_j(\mathbf{h}).
\]
By algebraicity of $\mathbf{f},\mathbf{g}_j$ we have $\mathfrak{P}_k(z_0)^{-1}h_j(\mathbf{h})\in\overline{\Q}$ and thus for all $w=h_{\infty}z_0\in\mathcal{W}$ we have $\mathfrak{P}_k(w)^{-1}h_j(h)\in\overline{\Q}$. Hence $\mathbf{h}_j\in\mathcal{N}_k^d(\overline{\Q})$ which completes the proof.
\end{proof}

Before stating the main theorem we need to establish one more result. Namely we show that
\begin{prop} Assume $k > 2n-1$ and let $\mu\in\Z$ such that $2n-1<\mu\leq k$. Then there exists a function $\mathbf{T}(g,h)$ with $\overline{\mathbf{T}(g,h)} \in \mathcal{N}^{m(k-\mu)}_k(\overline{\Q}) \times \mathcal{M}_k(\overline{\mathbb{Q}}) $, such that 
\[
\langle \mathbf{T}(g, h),\mathbf{f}(h)\rangle = \langle \mathfrak{E}(g \times h,\mu),\mathbf{f}(h)\rangle.
\]
\end{prop}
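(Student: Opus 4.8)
The plan is to exploit the decomposition of the Eisenstein series along the coset representatives $\tilde\tau_t$ from Proposition \ref{Coset Decomposition}, together with the near-holomorphy of $\mathfrak{E}_k(\mathfrak{g},\mu)$ on the doubled group $G_N$ established in Proposition \ref{Nearly holomorphic Eisenstein series}, and then apply Lemma \ref{decomposition} to split the restriction to $G_n \times G_n$ into a finite sum of products. First I would observe that, by Proposition \ref{Nearly holomorphic Eisenstein series}(2) applied to the group $G_N$ (of size $N=2n$, so the relevant parameter is ``$m$'' replaced by $n$), under the hypothesis $k>2n-1$ and $2n-1<\mu\le k$ the function $\mathfrak{E}_k(\mathfrak{g},\mu) = \Lambda_{\mathfrak{n}}(\mu,\chi)\mathbf{E}(\mathfrak{g},\mu)$ lies in $\pi^{\beta}\mathcal{N}^{n(k-\mu)}_k(\overline{\Q})$ for the appropriate exponent $\beta$; in particular the twisted Eisenstein series $\mathbf{E}(\mathfrak{g},\mu) = E^n_k(\mathfrak{g}\,\sigma^{-1},\mu;\chi)$ is (a $\overline{\Q}^\times$-multiple of a power of $\pi$ times) an algebraic nearly holomorphic form on $G_N$, since $\sigma$ is a fixed finite adelic element and right translation by it preserves $\mathcal{N}^d_k(\overline{\Q})$ up to the algebraic factor $\Lambda_{\mathfrak{n}}(\mu,\chi)\in\overline{\Q}$ (here one uses $\mu\in\Z$ so the completed Dirichlet $L$-values $L_{\mathfrak{n}}(2\mu-2i,\chi^2)$ are algebraic multiples of powers of $\pi$, and they are absorbed into the normalisation).

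Next I would restrict this nearly holomorphic form on $G_N$ to the image of the doubling embedding $\rho:G_n\times G_n\to G_N$. By Lemma \ref{decomposition}, the restriction $\mathbf{f}_0(g\times h) := \mathfrak{E}(g\times h,\mu)$, viewed as an element of $\mathcal{N}^{n(k-\mu)}_k(\overline{\Q})$ on $G_N$ (again up to an explicit power of $\pi$), decomposes as a finite sum $\sum_{j=1}^e \mathbf{g}_j(g)\overline{\mathbf{h}_j(h)}$ with $\mathbf{g}_j,\mathbf{h}_j\in\mathcal{N}^{n(k-\mu)}_k(\overline{\Q})$ associated to $G_n$. Note that the degree $d$ of near-holomorphy in the two variables is at most $n(k-\mu)$, but when we use the $G_n\times G_n$-embedding the relevant bound on each factor separately is $m(k-\mu)$ in the notation of the proposition being proved (here $m$ is the rank parameter of $G_n$, $n=2m+r$); matching these bounds is a bookkeeping point, and I would simply take the degree bound that comes out of the restriction argument and check it is $\le m(k-\mu)$ using the explicit shape of $\mathfrak{Z}_{m,r}$ and the embedding $\iota$ of Proposition \ref{diagonal embedding}, where the $B(z_1,z_2)$-factor is holomorphic.

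Then I would define $\mathbf{T}(g,h) := \sum_{j=1}^e \mathbf{g}_j(g)\,\overline{\mathbf{h}_j(h)}$ — or more precisely the function $(g,h)\mapsto \sum_j \mathbf{g}_j(g)\overline{\mathbf{h}_j(h)}$ with the convention that $\overline{\mathbf{T}(g,h)} = \sum_j \overline{\mathbf{g}_j(g)}\,\mathbf{h}_j(h)$, so that $\overline{\mathbf{T}(g,h)}$ lies in $\mathcal{N}^{m(k-\mu)}_k(\overline{\Q})$ in the first variable $g$ and — here is where I would pair against $\mathbf{f}$ — I replace each $\mathbf{h}_j$ by its holomorphic projection. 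Concretely, since $\mathbf{f}$ is a holomorphic cusp form, in the inner product $\langle \mathbf{h}_j, \overline{\mathbf{f}}\rangle$ (in the $h$-variable) only the holomorphic projection of $\mathbf{h}_j$ survives; by the general theory of the holomorphic (Shimura) projection operator on nearly holomorphic forms — which preserves $\overline{\Q}$-rationality up to the predicted powers of $\pi$, exactly as in \cite[Chapter III]{Sh00} — there is $\mathbf{h}_j^{\mathrm{hol}}\in\mathcal{M}_k(\overline{\Q})$ with $\langle \mathbf{h}_j,\overline{\mathbf{f}}\rangle = \langle \mathbf{h}_j^{\mathrm{hol}},\overline{\mathbf{f}}\rangle$. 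Setting $\mathbf{T}(g,h) := \sum_j \mathbf{g}_j(g)\,\overline{\mathbf{h}_j^{\mathrm{hol}}(h)}$ gives $\overline{\mathbf{T}(g,h)}\in \mathcal{N}^{m(k-\mu)}_k(\overline{\Q})\times\mathcal{M}_k(\overline{\Q})$ and $\langle \mathbf{T}(g,h),\mathbf{f}(h)\rangle = \langle \mathfrak{E}(g\times h,\mu),\mathbf{f}(h)\rangle$ as required, after absorbing the overall algebraic times power-of-$\pi$ constant (which is harmless since it is the same on both sides, or can be tracked explicitly).

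The main obstacle I expect is the holomorphic projection step combined with the rationality bookkeeping: one must know that Shimura's holomorphic projection operator is available in the quaternionic (type D, possibly non-tube) setting and that it sends $\mathcal{N}^d_k(\overline{\Q})$ to $\mathcal{M}_k(\overline{\Q})$ with a controlled power of $\pi$. In the tube case ($r=0$) this is exactly the setup where Proposition 5.x identifies algebraicity with algebraic Fourier coefficients, so one can argue as in \cite{Sh00}; in the non-tube case one instead uses the CM-point characterisation of algebraicity together with Lemma \ref{decomposition}-type density arguments, and the fact — recorded in the differential-operator lemmas above — that the operators $\mathfrak{D}^Z_\omega$ and their inverses (the projection) interact well with the $\overline{\Q}$-structure. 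I would also need to double-check that the degree bound $m(k-\mu)$ (rather than $n(k-\mu)$) genuinely holds after restriction, which follows because the embedding $\iota$ contributes only holomorphic automorphy factors $B(z_1,z_2)$, so the nearly-holomorphic ``depth'' in each of the two variables is inherited without increase.
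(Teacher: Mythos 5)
Your overall shape (restrict the nearly holomorphic Eisenstein series to $G_n\times G_n$, split it, and replace the $h$-dependence by a holomorphic form with the same pairing against cusp forms) is the right one, but the step you defer — ``by the general theory of the holomorphic (Shimura) projection operator \ldots exactly as in \cite{Sh00}, Chapter III, there is $\mathbf{h}_j^{\mathrm{hol}}\in\mathcal{M}_k(\overline{\Q})$ with the same inner product'' — is precisely the content of the proposition, and it cannot be quoted: Shimura's projection theory is developed for the symplectic and unitary cases, not for the quaternionic type D groups of this paper, and establishing its analogue here is what the paper's proof actually does. The argument in the paper is representation-theoretic, in the style of \cite{Sh00}, Appendix A8: one shows that for $\rho=\det^k$ with $k>2n-1$ every nonzero holomorphic $f$ generates a unitarizable $\mathfrak{U}$-module (this uses the type D formula for $\psi_Z$ from \cite{Sh84}, so that $\psi_Z(-k)\neq 0$, together with the existence of a cocompact discrete subgroup and a nonzero invariant holomorphic function, obtained by restriction from the unitary case via $\mathbf{G}\hookrightarrow U(n,n)$); one then proves a projection lemma (analogue of Lemma A8.7) asserting that a nearly holomorphic algebraic form \emph{whose associated $\mathfrak{U}$-module is unitarizable} admits a holomorphic, $\overline{\Q}$-rational replacement with the same pairing against cusp forms; one checks unitarizability is inherited in each variable under the doubling pullback (analogue of A8.11); and finally one verifies that $\mathfrak{E}_k(\cdot,\mu)$ lies in such a module because it is obtained from a holomorphic Eisenstein series of weight $\mu>2n-1$ by the Maass--Shimura operators $\Delta^p_\mu$, which are elements of the universal enveloping algebra. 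Note in particular that the hypothesis $k>2n-1$ enters here to make the projection legitimate, not only through Proposition \ref{Nearly holomorphic Eisenstein series}; your sketch uses the weight bound only for near-holomorphy and offers no substitute for the unitarizability input (the suggestion that the projection is an ``inverse'' of the operators $\mathfrak{D}^Z_\omega$ interacting well with the $\overline{\Q}$-structure is not an argument).

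Two smaller points. First, the paper does not invoke Lemma \ref{decomposition} in this proof; it projects the pullback in the $h$-variable with $g$ fixed and only later, in the main theorem, splits $\mathbf{T}$ into a finite sum of products. Your reordering (split first, then project each $\mathbf{h}_j$) could in principle work, but it creates an extra obligation: the projection lemma available here applies to forms generating unitarizable modules, so you would have to check that each $\mathbf{h}_j$ (a linear combination of the functions $h\mapsto\mathfrak{E}(g_i\times h,\mu)$) inherits this property — doable, but it must be said, since there is no unconditional rational projection to appeal to in the type D setting. Second, your treatment of $\sigma$, $\Lambda_{\mathfrak{n}}(\mu,\chi)$ and the degree bookkeeping ($n(k-\mu)$ versus $m(k-\mu)$) is fine and not where the difficulty lies.
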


\begin{proof} This is the analogue of Lemma 29.3 proved in \cite{Sh00} in the unitary case. Actually it is even simpler in our case since we do not need to involve some more complicated differential operators needed in the unitary case. Here we simply indicate some changes to the proof in \cite{Sh00} to cover our case. We follow the notation of the Appendix A8 in \cite{Sh00} and write $\mathfrak{g}$ for the real Lie algebra of $\mathbf{G}:=G(\mathbb{R})$. We then have the familiar decomposition of the complexification $\mathfrak{g}_{\mathbb{C}} = \mathfrak{t}_{\mathbb{C}} \oplus \mathfrak{p}_{+} \oplus \mathfrak{p}_{-}$ where $\mathfrak{t}$ is the Lie algebra of the fixed maximal compact subgroup $\mathbf{K} \cong U(n)$. Finally we write $\mathfrak{U}$ for the universal enveloping Lie algebra of $\mathfrak{g}_{\mathbb{C}}$, and $\mathbf{K}^c = GL_n(\mathbb{C})$ for the complexification of $\mathbf{K}$.

Given a representation $(\rho,V)$ of $\mathbf{K}^{c}$ we write $C^{\infty}(\rho)$ for the functions $f \in C^{\infty}(\mathbf{G},V)$ such that $f(xk^{-1}) = \rho(k) f(x)$ for all $k \in \mathbf{K} \subset \mathbf{K}^c$, and $x \in \mathbf{G}$. As in \cite{Sh00} there is a bijection between $C^{\infty}(\mathfrak{Z},V)$ and $C^{\infty}(\rho)$ which we denote by $f \mapsto f^{\rho}$. We also write $H(\rho)$ for the functions in $C^{\infty}(\rho)$ such that $Y f = 0$ for all $Y \in \mathfrak{p}_{-}$. These functions correspond to holomorphic functions in $C^{\infty}(\mathfrak{Z},V)$. 

Recall (see \cite{Sh00}) that a $\mathfrak{U}$-module $\mathcal{Y}$ is called unitarizable if there exists a positive definite hermitian form $\{\,,\,\} : \mathcal{Y} \times \mathcal{Y} \rightarrow \mathbb{C}$ such that $\{Xg,h\} =-\{g,Xh\}$ for every $g,h \in \mathcal{Y}$ and $X \in \mathfrak{g}$.  

Let us now take $\rho = \det^k$ for some $k > 2m-1$. Then we have that for any nonzero $f \in H(\rho)$ the $\mathfrak{U}$-module structure of $\mathfrak{U}f$ depends only on the weight $k$, and that such a module is unitarisable. This follows exactly as in \cite[Theorem A8.4]{Sh00} where the cases of unitary and symplectic groups are considered. What needs to be explained is the bound on the weight $k$. For this there are two remarks that one needs to make: first that the bound follows from the fact that in our Type D setting the function $\psi_{Z}$ is given by (see \cite{Sh84}),
\[
\psi_Z(s) = \prod_{h=1}^m \prod_{i=1}^{r_{2h}} \left( s-i+2h-1\right)
\]
For the notation we refer to \cite{Sh84} since what is important in the proof is the fact that $\psi_Z(-k) \neq 0$ which is satisfied for the selected bound on $k$. The other remark is the existence of a nonzero $g \in H(\det)$ and a discrete subgroup $\Gamma$ of $\mathbf{G}$ such that $\Gamma \setminus \mathbf{G}$ is compact and $f(\gamma x) =f(x)$ for all $\gamma \in \Gamma$. This is the analogue of \cite[Lemma A8.5]{Sh00} which covers the unitary case. But again the existence of such a $g$ and a discrete $\Gamma$ can be derived from the existence of such elements in the unitary case, say $\widetilde{\Gamma}$ and $\widetilde{g}$ (the content of Lemma A8.5 and the natural closed embedding $\mathbf{G} \hookrightarrow U(n,n)$. In particular we may take $\Gamma:= \widetilde{\Gamma} \cap \mathbf{G}$ and $g$ as the restriction of $\widetilde{g}$ to $\mathbf{G}$.

The importance of considering $\mathfrak{U}$-module structures which are unitarizable becomes clear from the following result on holomorphic projection. Namely, if we still write $\rho = \det^k$, and consider an $f \in N_{k}^d(\overline{\mathbb{Q}})$ for any $d \in \mathbb{N}$ such that $\mathfrak{U} f^\rho$ is unitarizable, then there exists an element $q \in M_k(\overline{\mathbb{Q}})$ such that $\langle f, h\rangle = \langle q, h\rangle$ for all $h \in S_{k}$. This is a rather general result and can be obtained exactly in the same way as it is done in \cite[Lemma A8.7]{Sh00} in the unitary and symplectic case with little changes.

We can now complete the proof of the proposition. First we note that unitarizable $\mathfrak{U}$-modules behave well with respect to the doubling mapping. Let us write $\mathbf{G}_{i}$ with $i=1,2$ for groups of type similar to $\mathbf{G}$, and we insert the index $i$ to all notations.
Assume we have a doubling embedding $\mathbf{G}_1 \times \mathbf{G}_1 \hookrightarrow \mathbf{G}_2$ is of the kind considered in this paper, and we write $\Delta$ for the corresponding embedding of symmetric spaces. Then if $f \in H_2(\rho)$ such that the $\mathfrak{U}_2$-module $\mathfrak{U}_2 f$ is unitarizable then the $\mathfrak{U}_1 \times \mathfrak{U}_1$ module $\Delta^* \mathfrak{U}f$ is unitarizable with respect to both variables.  This follows similar to \cite[Lemma A8.11]{Sh00}. Hence in order to complete the proof it is enough to show that the Eisenstein series $\mathfrak{E}_k(g,\mu)$ belongs to a unitarizable $\mathfrak{U}$-module, since then when we pull it back with respect to the diagonal embedding we can keep the one variable constant (the variable $g$ in the statement of the proposition) and take the holomorphic projection with respect to the other.  This final claim follows from the fact shown in Proposition \ref{Nearly holomorphic Eisenstein series} that the Eisenstein series $\mathfrak{E}_k(g,\mu)$ on $G_N$ with $N=2n$ are obtained from holomorphic ones of weight $l \geq N-1 = 2n-1$ by applying the Shimura-Maass operators $\Delta_l^p$. But these operators are well known to be obtained as operators of the universal enveloping algebra. Indeed this is shown for the symplectic and unitary case in the first few lines of \cite[A8.8]{Sh00} and more generally in \cite{Harris81}. 
\end{proof}

We can now prove the Theorem on the algebraicity of the $L$-values (we remind the reader here of the Remark \ref{Remark on the conductor} made in the introduction).

\begin{thm} \label{Main Theorem}
Let $\mathbf{f}\in\mathcal{S}_k(K_1(\mathfrak{n}), \overline{\mathbb{Q}})$ be an eigenform with $k>2n-1$, and let $\chi$ be a Dirichlet character whose conductor divides the ideal $\mathfrak{n}$. Let $\mu\in\Z$ such that $2n-1<\mu\leq k$, then
\[
\frac{L(\mu,\mathbf{f},\chi)}{\pi^{n(k+\mu)-\frac{3}{2}n(n-1)}\langle\mathbf{f,f}\rangle}\in\overline{\Q}.
\]
\end{thm}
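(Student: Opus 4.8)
The plan is to combine the integral representation of Theorem~\ref{Integral Representation} with the near-holomorphy and algebraicity of the Siegel–type Eisenstein series, exactly following the strategy of \cite[Chapter~VII]{Sh00}. Set $l=k$ on $G_N$ with $N=2n$, and recall from the discussion before Theorem~\ref{Main Theorem} that
\[
L(s,\mathbf{f},\chi)\mathbf{f}(g)=c_k(s)\int_{G(\Q)\backslash G(\mathbb{A})/K_1(\mathfrak{n})K_{\infty}}\mathfrak{E}(g\times h,s)\mathbf{f}(h)\,\mathbf{d}h .
\]
First I would specialize to $s=\mu$ with $2n-1<\mu\le k$. By Proposition~\ref{Nearly holomorphic Eisenstein series} applied on $G_N$, the Eisenstein series $\mathfrak{E}_k(\mathfrak{g},\mu)$ lies in $\pi^{\beta}\mathcal{N}^{n(k-\mu)}_k(\overline{\Q})$ on $G_N$ with $\beta=n(k+\mu)-n(n-1)$ (this is the formula of part (2) of that proposition with $m$ replaced by $n$, since the group in question is $G_N$ with $N=2n=2m'$, $m'=n$). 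Pulling back along the doubling embedding $\rho$, Lemma~\ref{decomposition} lets me write the restriction $\pi^{-\beta}\mathfrak{E}_k(g\times h,\mu)$ as a finite sum $\sum_j \mathbf{g}_j(g)\overline{\mathbf{h}_j(h)}$ with $\mathbf{g}_j,\mathbf{h}_j\in\mathcal{N}^{n(k-\mu)}_k(\overline{\Q})$ on $G_n$.

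Next I would apply the holomorphic projection Proposition (the one just before the theorem): there is $\mathbf{T}(g,h)$ with $\overline{\mathbf{T}(g,h)}\in\mathcal{N}^{n(k-\mu)}_k(\overline{\Q})\times\mathcal{M}_k(\overline{\Q})$ such that $\langle\mathbf{T}(g,h),\mathbf{f}(h)\rangle=\langle\mathfrak{E}(g\times h,\mu),\mathbf{f}(h)\rangle$. Thus the inner product that computes $L(\mu,\mathbf{f},\chi)$ is, up to the factor $\pi^{\beta}$, a $\overline{\Q}$-linear combination $\sum_j \mathbf{g}_j(g)\,\overline{\langle \mathbf{q}_j,\mathbf{f}\rangle}$ where $\mathbf{q}_j\in\mathcal{M}_k(\overline{\Q})$ is the holomorphic projection of $\mathbf{h}_j$. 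Since $\mathbf{f}$ is a Hecke eigenform, $\langle\mathbf{q}_j,\mathbf{f}\rangle=a_j\langle\mathbf{f},\mathbf{f}\rangle$ with $a_j\in\overline{\Q}$: this is the standard argument that the projection of any $\mathcal{M}_k(\overline{\Q})$-form onto the $\mathbf{f}$-eigenspace has algebraic coefficients, because the relevant Hecke operators are defined over $\overline{\Q}$ and the eigenform is $\overline{\Q}$-rational, so one solves a finite linear system over $\overline{\Q}$. Evaluating the identity at $g$ with $\mathbf{f}(g)\ne0$ (again at a CM point, using the algebraicity of $\mathbf{f}$ and of $j(\,\cdot\,,w)$ to divide) gives
\[
L(\mu,\mathbf{f},\chi)=c_k(\mu)\,\pi^{\beta}\cdot(\text{element of }\overline{\Q})\cdot\overline{\langle\mathbf{f},\mathbf{f}\rangle}
      =c_k(\mu)\,\pi^{\beta}\cdot(\text{element of }\overline{\Q})\cdot\langle\mathbf{f},\mathbf{f}\rangle ,
\]
using that $\langle\mathbf{f},\mathbf{f}\rangle$ is real.

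Finally I would read off the power of $\pi$. From Lemma~\ref{Reproducing Kernel}, $c_k(\mu)=\alpha(\mu)\pi^{n(n-1)/2}\prod\Gamma(\cdots)/\prod\Gamma(\cdots)$ with $\alpha(\mu)\in\overline{\Q}$; all the Gamma-factors here are evaluated at integers $\ge1$ (this uses $\mu>2n-1$ so that every argument $\mu+k-2n+3,\dots,\mu+k$ is a positive integer), hence $c_k(\mu)\in\overline{\Q}^{\times}\cdot\pi^{n(n-1)/2}$. Combining with $\beta=n(k+\mu)-n(n-1)$ gives total $\pi$-exponent $n(k+\mu)-n(n-1)+\tfrac12 n(n-1)=n(k+\mu)-\tfrac32 n(n-1)$, which is exactly the exponent in the statement; therefore $L(\mu,\mathbf{f},\chi)/\bigl(\pi^{n(k+\mu)-\frac32 n(n-1)}\langle\mathbf{f},\mathbf{f}\rangle\bigr)\in\overline{\Q}$. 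I expect the main obstacle to be the careful bookkeeping of the two $\pi$-powers (the one hidden in $c_k(\mu)$ via the Gamma-quotient of Lemma~\ref{Reproducing Kernel}, and the one $\beta$ coming from the Shimura–Maass operators in Proposition~\ref{Nearly holomorphic Eisenstein series}), together with checking that no extra transcendental factor sneaks in when one divides by $\mathbf{f}(g)$ at a CM point and when one passes from $\overline{\langle\mathbf{f},\mathbf{f}\rangle}$ back to $\langle\mathbf{f},\mathbf{f}\rangle$; the rest of the argument is a faithful transcription of the corresponding steps in \cite{Sh00}.
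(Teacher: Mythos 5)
Your overall strategy is the same as the paper's (integral representation, near-holomorphy of $\mathfrak{E}_k(\cdot,\mu)$ with the exponent $\beta=n(k+\mu)-n(n-1)$, the splitting Lemma \ref{decomposition}, holomorphic projection, evaluation at a CM point with cancellation of the period $\mathfrak{P}_k(w)$), but there is a genuine gap at the decisive step. You assert that $\langle\mathbf{q}_j,\mathbf{f}\rangle=a_j\langle\mathbf{f},\mathbf{f}\rangle$ with $a_j\in\overline{\Q}$ because ``the projection of any $\mathcal{M}_k(\overline{\Q})$-form onto the $\mathbf{f}$-eigenspace has algebraic coefficients.'' Solving a $\overline{\Q}$-linear system only shows that the projection of $\mathbf{q}_j$ onto the rational eigenspace $\mathcal{V}(\overline{\Q})=\{\mathbf{h}:\mathbf{h}|T_{\xi}=\lambda_{\mathbf{f}}(\xi)\mathbf{h}\}\cap\mathcal{S}_k(\overline{\Q})$ is $\overline{\Q}$-rational; it does \emph{not} give $\langle\mathbf{h},\mathbf{f}\rangle\in\overline{\Q}\,\langle\mathbf{f},\mathbf{f}\rangle$ for an arbitrary $\mathbf{h}\in\mathcal{V}(\overline{\Q})$ unless $\dim\mathcal{V}=1$, and no multiplicity-one statement is available (or assumed) here. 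This is exactly the nontrivial point the paper has to prove: after using Harris's rational decomposition $\mathcal{M}_k(\overline{\Q})=\mathcal{S}_k(\overline{\Q})\oplus\mathcal{E}_k(\overline{\Q})$ to replace the $\mathbf{h}_j$ by cusp forms, one evaluates the identity at \emph{varying} CM points $w$, obtains forms $\mathbf{h}_w\in\mathcal{V}(\overline{\Q})$ with $c_k(\mu)L(\mu,\mathbf{f},\chi)\pi^{-\beta}\,\mathbf{f}(g_{\mathbf{h}}g_\infty)/\mathfrak{P}_k(w)=\langle\mathbf{h}_w,\mathbf{f}\rangle$, uses the nonvanishing $L(\mu,\mathbf{f},\chi)\neq 0$ (Euler product, $\mu>2n-1$) and the fact that the $\mathbf{h}_w$ span $\mathcal{V}(\overline{\Q})$ to deduce $\langle\mathbf{h},\mathbf{h}'\rangle/\langle\mathbf{f},\mathbf{f}\rangle\in\overline{\Q}$ for all $\mathbf{h},\mathbf{h}'\in\mathcal{V}(\overline{\Q})$, and only then divides. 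Your shortcut assumes precisely this conclusion, so the proof as written does not close.

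There is also a bookkeeping error in the power of $\pi$. By Theorem \ref{Integral Representation}, $c_k(s)$ sits on the same side as the $L$-value: $c_k(\mu)L(\mu,\mathbf{f},\chi)\mathbf{f}(g)=\langle\mathbf{T}(g,h),\mathbf{f}(h)\rangle$ (the display at the opening of Section 6 has $c_k$ misplaced, and you copied it). With the correct placement one gets $c_k(\mu)L(\mu,\mathbf{f},\chi)\pi^{-\beta}\langle\mathbf{f},\mathbf{f}\rangle^{-1}\in\overline{\Q}$, and since $c_k(\mu)\in\pi^{n(n-1)/2}\overline{\Q}$ (your observation that all Gamma arguments are positive integers is fine), the exponent is $\beta-\tfrac12 n(n-1)=n(k+\mu)-\tfrac32 n(n-1)$, as in the statement. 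In your version $c_k(\mu)$ multiplies the right-hand side, which would give $\beta+\tfrac12 n(n-1)=n(k+\mu)-\tfrac12 n(n-1)$; the equality you wrote, $n(k+\mu)-n(n-1)+\tfrac12 n(n-1)=n(k+\mu)-\tfrac32 n(n-1)$, is arithmetically false, and it is this slip that makes your exponent appear to match. Finally, note that dividing by $c_k(\mu)$ at the end requires knowing $c_k(\mu)\neq 0$, which should be recorded when invoking Lemma \ref{Reproducing Kernel}.
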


\begin{proof}
We prove the theorem following an idea used in the proof of \cite[Theorem 29.5]{Sh00}, which allows us to cover also the non-split (i.e. non-tube) case. By the above Proposition we can replace $\mathfrak{E}(g\times h,\mu)$ by $\overline{\mathbf{T}(g,h)}$ holomorphic in $h$ such that the integral can be rewritten as
\[
c_k(\mu)L(\mu,\mathbf{f},\chi)\mathbf{f}(g)=\langle \mathbf{T}(g,h),\mathbf{f}(h)\rangle.
\]
By Lemma \ref{decomposition} we have
\[
\pi^{-\beta}\mathbf{T}(g,h)=\sum_{j=1}^e\overline{\mathbf{g}_j(g)}\mathbf{h}_j(h),
\]
with $\mathbf{g}_j\in\mathcal{N}_k^{n(k-\mu)}(\overline{\Q}),\mathbf{h}_j\in\mathcal{M}_k(\overline{\Q})$, and $\beta = n(k+\mu)-n(n-1)$. We note here that indeed $\mathbf{h}_j\in\mathcal{M}_k(\overline{\Q})$, as one can see in the proof of Lemma \ref{decomposition} that the analytic properties of the $\mathbf{h}_j$'s follow from that of the restricted function on the $h$ variable since they are obtained as the solutions of a linear system where the ``constant'' vector consists of holomorphic functions.

Then above equation can be written as
\[
\frac{c_k(\mu)L(\mu,\mathbf{f},\chi)}{\pi^{\beta}}\mathbf{f}(g)=\sum_{j=1}^e\langle\mathbf{h}_j,\mathbf{f}\rangle\cdot\mathbf{g}_j(g).
\]

Since we are assuming $k > 2n-1$ we may apply \cite[Corollary 2.4.6]{Harris} and write $\mathcal{M}_k(\overline{\mathbb{Q}})=\mathcal{S}_k(\overline{\mathbb{Q}}) \oplus\mathcal{E}_k(\overline{\mathbb{Q}})$ as a direct sum of space of algebraic cusp form and the space of algebraic Eisenstein series. In particular we can find $\mathbf{h}_j'\in\mathcal{S}_k(\overline{\Q})$ such that $\langle\mathbf{h}_j,\mathbf{f}\rangle=\langle\mathbf{h}_j',\mathbf{f}\rangle$. Let $w=g_{\infty}z_0$ be a CM point with period $\mathfrak{P}_k(w)$ then by definition $\mathfrak{P}_k(w)^{-1}\mathbf{g}_j(g_{\mathbf{h}}g_{\infty})\in\overline{\Q}$. Therefore at $g={g}_{\mathbf{h}}g_{\infty}$ we can further find some $\mathbf{h}''\in\mathcal{S}_k(K_1(\mathfrak{n}),\overline{\Q})$ such that
\[
\frac{c_k(\mu)L(\mu,\mathbf{f},\chi)}{\pi^{\beta}}\mathbf{f}(g_{\mathbf{h}}\cdot g_{\infty})=\mathfrak{P}_k(w)\langle\mathbf{h}'',\mathbf{f}\rangle.
\]
Denote
\[
\mathcal{V}=\{\mathbf{f}\in\mathcal{S}_{k}(K_1(\mathfrak{n})):\mathbf{f}|T_{\xi}=\lambda(\xi)\mathbf{f}\},\mathcal{V}(\overline{\Q})=\mathcal{V}\cap\mathcal{S}_k(K_1(\mathfrak{n}))
\]
for the space consisting of eigenforms with same eigenvalues as $\mathbf{f}$. Since $\mathcal{S}_k(K_1(\mathfrak{n}))=\mathcal{S}_k(K_1(\mathfrak{n}),\overline{\Q})\otimes_{\overline{\Q}}\C$ and $\mathcal{S}_k(K_1(\mathfrak{n}),\overline{\Q})$ is stable under the action of Hecke operators, we obtain that the eigenvalues $\lambda(\xi)\in\overline{\Q}$. Hence we have $\mathcal{V}=\mathcal{V}(\overline{\Q})\otimes_{\overline{\Q}}\C$. We may now write $\mathcal{S}_k(K_1(\mathfrak{n}),\overline{\Q}))=\mathcal{V}(\overline{\Q})\oplus\mathcal{U}$ for some $\overline{\Q}$-rational vector space $\mathcal{U}$ (compare with the first few lines of the proof of \cite[Theorem 28.5]{Sh00}). With $w=g_{\infty}z_0$ as above, let $\mathbf{h}_w$ be the projection of $\mathbf{h}''$ to $\mathcal{V}(\overline{\Q})$, then for all $\mathbf{f}\in\mathcal{V}(\overline{\Q})$ and any CM point $w=g_{\infty}z_0$ we have
\[
\frac{c_k(\mu)L(\mu,\mathbf{f},\chi)}{\pi^{\beta}}\frac{\mathbf{f}(g_{\mathbf{h}}\cdot g_{\infty})}{\mathfrak{P}_k(w)}=\langle\mathbf{h}_w,\mathbf{f}\rangle.
\]
For a fixed $\mu$, we can choose $w$ such that $\langle\mathbf{h}_w,\mathbf{f}\rangle\neq 0$, since $L(\mu,\mathbf{f},\chi) \neq 0$, thanks to the Euler product expansion and absolute convergence for such an $\mu> 2n-1$. Such $\mathbf{h}_{w}$ span $\mathcal{V}(\overline{\Q})$ so for any $\mathbf{h,h'}\in\mathcal{V}(\overline{\Q})$ we have $\langle\mathbf{h,h'}\rangle\in\pi^{\beta}c_k(\mu)^{-1}L(\mu,\mathbf{f},\chi)\overline{\Q}$ and thus $\langle\mathbf{h,h'}\rangle/\langle\mathbf{f,f}\rangle\in\overline{\Q}$. Choose $g_{\infty}$ such that $\mathbf{f}(g_{\mathbf{h}}g_{\infty})\neq 0$. Then
by algebraicity of $\mathbf{f}$, we have
\[
\frac{c_k(\mu)L(\mu,\mathbf{f},\chi)}{\pi^{\beta}\langle\mathbf{f,f}\rangle}=\frac{\langle\mathbf{h}_w,\mathbf{f}\rangle}{\langle\mathbf{f,f}\rangle}\left(\frac{\mathbf{f}(g_{\mathbf{h}}\cdot g_{\infty})}{\mathfrak{P}_k(w)}\right)^{-1}\in\overline{\Q},
\]
and the result follows from the value of $c_k(\mu)$ in Lemma \ref{Reproducing Kernel}.
\end{proof}

We note here that in a forthcoming work of the second named author \cite{YJ1} a construction of a $p$-adic $L$ function attached to Hecke eigenform in the tube case (i.e. $n$ even) is provided. Moreover, in that work, and in the tube case, an improved bound on the weight of the modular form compared to the Theorem \ref{Main Theorem} above is obtained. We also remark that there are other interesting aspects which we have not discussed in this paper as for example the algebraicity of Klingen-type Eisenstein series. This would have made the length of this paper considerably longer. However such aspects are investigated in detail in the Ph.D thesis under progress of the second named author \cite{YJ2}.

\end{document}